\newtheorem{thm}{Theorem}
\newtheorem{prop}{Proposition}%
\newtheorem{lem}{Lemma}
\newtheorem{coro}{Corollary}
\newtheorem{rmk}{Remark}%
\newtheorem{asmp}{Assumption}
\newcommand{\R}{\mathbb R}
\newcommand{\N}{\mathbb N}
\newcommand{\tilr}{\Tilde{r}}
\newcommand{\bgrad}{\textbf{grad}}
\newcommand{\Bt}{\textbf{t}}
\newcommand{\textand}{\quad \text{and} \quad}
\DeclareMathOperator{\grad}{grad}
\DeclareMathOperator*{\argmin}{arg\,min}
\newcommand{\norm}[1]{\left \Vert #1 \right \Vert}
\newcommand{\set}[1]{\left \{ #1 \right \}}
\newcommand{\dist}{\mathrm{dist}}
\newcommand{\TM}[1]{T_{#1}\mathcal{M}}
\newcommand{\tm}{T\mathcal{M}}
\newcommand{\rmd}{\mathrm{d}}
\newcommand{\Exp}{\operatorname{Exp}}
\newcommand{\inprod}[2]{\left\langle #1, #2\right\rangle}
\newcommand{\dx}[2]{\Delta x_{#1}^{(#2)}}
\newcommand{\dr}[2]{\Delta r_{#1}^{(#2)}}
\newcommand{\cB}{\mathcal{B}}
\newcommand{\cE}{\mathcal{E}}
\newcommand{\cH}{\mathcal{H}}
\newcommand{\cL}{\mathcal{L}}
\newcommand{\cM}{\mathcal{M}}
\newcommand{\cN}{\mathcal{N}}
\newcommand{\cO}{\mathcal{O}}
\newcommand{\cP}{\mathcal{P}}
\newcommand{\cR}{\mathcal{R}}
\newcommand{\cS}{\mathcal{S}}
\newcommand{\cT}{\mathcal{T}}
\newcommand{\cU}{\mathcal{U}}
\newcommand{\cX}{\mathcal{X}}
\begin{document}

\title{Riemannian Anderson Mixing Methods for Minimizing $C^2$-Functions on Riemannian Manifolds}


\author[1]{\fnm{Zanyu} \sur{Li}}\email{lizy21@mails.tsinghua.edu.cn}

\author*[1,2]{\fnm{Chenglong} \sur{Bao}}\email{clbao@tsinghua.edu.cn}

\affil[1]{\orgdiv{Yau Mathematical Sciences Center}, \orgname{Tsinghua University}, \orgaddress{\city{Beijing}, \country{China}}}

\affil[2]{\orgname{Yanqi Lake Beijing Institute of Mathematical Sciences and Applications}, \orgaddress{\city{Beijing}, \country{China}}}



\abstract{The Anderson Mixing (AM) method is a popular approach for accelerating fixed-point iterations by leveraging historical information from previous steps. In this paper, we introduce the Riemannian Anderson Mixing (RAM) method, an extension of AM to Riemannian manifolds, and analyze its local linear convergence under reasonable assumptions. Unlike other extrapolation-based algorithms on Riemannian manifolds, RAM does not require computing the inverse retraction or inverse exponential mapping and has a lower per-iteration cost. Furthermore, we propose a variant of RAM called Regularized RAM (RRAM), which establishes global convergence and exhibits similar local convergence properties as RAM. Our proof relies on careful error estimations based on the local geometry of Riemannian manifolds. Finally, we present experimental results on various manifold optimization problems that demonstrate the superior performance of our proposed methods over existing Riemannian gradient descent and LBFGS approaches.}

\keywords{manifold optimization, regularized Anderson mixing, local linear convergence, global convergence}


\maketitle

\section{Introduction}
Manifold optimization has recently attracted significant interest due to its diverse range of applications, including matrix completion problems \cite{vandereycken2013low}, nonlinear eigenvalue problems \cite{zhang2014gradient}, principal component analysis \cite{
zass2006nonnegative}, sparse blind deconvolution \cite{zhang2017global}, and deep learning \cite{cho2017riemannian}. The general problem of manifold optimization involves minimizing a real-valued function on a complete Riemannian manifold, as shown in Equation \eqref{eqn:model}:
\begin{equation}\label{eqn:model}
	\min_{x\in\mathcal{M}} f(x),
\end{equation}
where $f$ is a $C^2$ function defined on the Riemannian manifold $\mathcal{M}$. Various classic Euclidean optimization methods have been extended to the Riemannian setting since Luenberger's pioneering work \cite{luenberger1972gradient} on the gradient method over manifolds, with Iannazzo \cite{iannazzo2018riemannian} studying Barzilai-Borwein methods, Huang et al. \cite{huang2018riemannian} examining BFGS quasi-Newton methods, Absil \cite{absil2007trust} developing trust-region methods, and Sato et al. \cite{sato2021riemannian} analyzing conjugate gradient methods. Moreover, adaptive regularized Newton algorithms \cite{hu2018adaptive} and stochastic methods \cite{sato2019riemannian} have been developed for solving large-scale Riemannian optimization problems. In addition, nonsmooth manifold optimization has garnered attention, with various algorithms proposed and analyzed, including subgradient methods \cite{ferreira1998subgradient}, proximal gradient methods \cite{huang2021riemannian}, semi-smooth Newton methods \cite{zhou2021semi}, and alternating direction methods of multipliers (ADMM) \cite{kovnatsky2016madmm}.

Extrapolation methods are widely used in optimization and numerical computation to accelerate first order algorithms. Among them, a typical extrapolation scheme is the Nesterov acceleration method \cite{nesterov1983method} which has been generalized to the manifold~\cite{zhang2018estimate,kim2022nesterov}. In~\cite{zhang2018estimate}, the Riemannian Nesterov method has the following update rule:
\begin{equation}
    \label{update:Nesterov}
\left\{
\begin{aligned}
    &y_{k} = \Exp_{x_k}\left(\frac{\alpha \gamma}{\gamma+\alpha \mu} \Exp_{x_k}^{-1}\left(v_k\right)\right),  \\
      &x_{k+1} =\Exp_{y_k}\left(-h \grad f\left(y_k\right)\right), \\
      &v_{k+1} =\Exp_{y_k}\left(\frac{(1-\alpha) \gamma}{\bar{\gamma}} \Exp_{y_k}^{-1}\left(v_k\right)-\frac{\alpha}{\bar{\gamma}} \grad f\left(y_k\right)\right),
\end{aligned}
\right.  
\end{equation}
where $h$ is the step size and $\alpha,\beta,\gamma,\bar{\gamma},\mu$ are predefined constants.
However, due to the existence of inverse exponential map or inverse retraction map in \eqref{update:Nesterov}, the above method can be computationally expensive in each step. Moreover, its convergence results are based on the "convexity" of $f$ in a manifold, which limits its applications.

Another commonly employed extrapolation technique in iterative schemes is the Anderson mixing (AM) method, as described by Anderson in his seminal work \cite{anderson1965iterative}. This method leverages information from past iterations to accelerate convergence towards a fixed-point solution.  Let $g:\R^{n}\rightarrow\R^n$ be a mapping, the goal of AM is to find the fixed points of $g$, i.e.\ the point $x$ with $g(x) = x$. Given $m\in\mathbb{N}$, we define the residual as $r_k = g(x_k)-x_k$ and 
\begin{equation}\label{eqn:X-and-R}
    X_k=[\Delta x_{k-m},\Delta x_{k-m+1},\ldots,\Delta x_{k-1}], R_k = [\Delta r_{k-m},\Delta r_{k-m+1},\ldots,\Delta r_{k-1}],
\end{equation}
where $\Delta$ denotes the discrete forward difference, e.g.\ $\Delta x_k = x_{k+1}-x_k$.
Each iteration in AM($m$) consists of two steps 
\begin{equation}\label{eqn:am_original_iter}
    \left\{
\begin{aligned}
& \Gamma_k = {\operatorname{argmin}} \left\| r_k - R_k\Gamma \right\|_2,\\
& \Bar{x}_k = x_k-X_k\Gamma_k, \ \bar{r}_k = r_k - R_k\Gamma_k,\\
& x_{k+1} = \Bar{x}_k+\beta_k \Bar{r}_k,
\end{aligned}
\right.
\end{equation}
where $\beta_k > 0$ is a parameter. The one-step update form of AM($m$) is
\begin{equation}
    x_{k+1} = x_k + \beta_k r_k - (X_k+\beta_kR_k)\Gamma_k.
\end{equation}
Also, many variants of AM have been proposed that use different projection conditions for finding the extrapolation coefficients~$\Gamma_k$. Although AM($m$) has close relationships with classical methods such as the multisecant Broyden method~\cite{fang2009two} and GMRES method~\cite{fang2009two,potra2013characterization}, it differs in that it does not explicitly approximate the Jacobian matrix or the product of the Jacobian matrix and a vector. This enables fast computation, particularly for large-scale optimization problems.Recent research has successfully applied AM($m$) to a range of problems, including correlation matrix completion, seismic data inversion, electronic structure computation, and deep learning~\cite{wei2021class}. 

From the theoretical perspective, the linear convergence of Anderson mixing (AM) is proven in \cite{toth2015convergence} for a contractive $g$ and uniformly bounded $\Gamma_k$, but the method fails to achieve faster convergence than classical fixed-point iteration. Evans et al. \cite{evans2020proof} extend AM to Hilbert space and obtain an improved local linear convergence rate under second-order assumptions on $g$, leveraging Taylor expansions. Chupin et al. \cite{chupin2021convergence} derive a superlinear convergence result for an adaptive-depth AM by assuming $\|F\circ g(x)\|_2 \leq K \|F(x)\|_2$ with $K \in (0,1)$, which has similarities to a contraction. Lastly, Pollock et al. \cite{pollock2021anderson} explore AM for both contractive and noncontractive operators, providing an improved local linear convergence rate that does not require $g$ to be contractive.

Motivated by the aforementioned computational and theoretical advantages of AM method, we propose the Rimenanian Anderson Mixing (RAM) method and aim to solve optimization problems on Riemannian manifolds, where the first-order condition $\grad f(x) = 0$ must be satisfied, with $\grad$ denoting the gradient on the manifold. Instead, we consider the problem of finding $x\in\cM$ such that $F(x) = 0$, where $F:\cM\rightarrow \tm$ is a $C^1$-vector field. To mimic the AM method, we define a fixed-point mapping $g:\cM\rightarrow\cM$ associated with the vector field $F$ as $g(x) := \Exp_{x}(-F(x))$, and it can be seen that $F(x) = 0$ is equivalent to $g(x)=x$. However, the forward difference operator $\Delta$ in~\eqref{eqn:X-and-R} is not well-defined on Riemannian manifolds, so we redefine the historical information matrices $X_k$ and $R_k$ by applying vector transport on each column in $X_k$ and $R_k$ so that both matrices lie in the tangent space of the same point. After solving a least-square problem in the tangent space, we compute the update using the retraction, which is an approximation of the exponential mapping. RAM does not require computing the inverse of retraction or exponential mapping, which improves computational efficiency. We establish the local linear convergence analysis of RAM under some assumptions on manifolds and the contractive assumption of $g$, which provides an improved local linear convergence rate over fixed-point iteration. As classical AM may cause problematic behavior, we propose a variant named regularized RAM (RRAM) that adds adaptive regularization to the least square problem. We prove that RRAM has global convergence properties while maintaining the local convergence property of RAM, due to careful estimation of iterations on the manifold. We conduct extensive experiments and compare RAM and RRAM methods with Riemannian gradient descent and LBFGS methods, demonstrating the numerical advantages of our proposed methods.

The rest of the paper is organized as follows. Section \ref{sec:pre} presents preliminaries on manifolds and some basic assumptions and lemmas. We present the details of our RAM algorithm and prove the local linear convergence in section \ref{sec:alg}. In section \ref{sec:global_conv}, we add some global convergence safeguards to RAM and propose the RRAM algorithm with a global convergence property. We report numerical experiments of several manifold optimization problems in section \ref{sec:experiment}. Finally, we conclude this paper in section \ref{sec:conclusion}.

\section{Notations and Preliminaries}\label{sec:pre}
In this section, we present some basic notations and definitions regarding Riemannian manifolds. For more details, interested readers may refer to \cite{absil2009optimization}.

Let $\inprod{\cdot}{\cdot}$ be the inner product on $\cM$, and let $\|\cdot\|$ be the induced metric, while we denote by $\|\cdot\|_2$ and $\|\cdot\|_\infty$ the $\ell_2$-norm and the $\ell_\infty$-norm of an Euclidean vector, respectively. The Frobenius norm of a matrix is denoted by $\|\cdot\|_F$. We define $B_{\cM}(p,r)$ as the open ball $\{q \in \mathcal{M}: \dist(p, q)<r\}$, and $B_{\TM{p}}(v_p,r)$ as the open ball $\{u_p \in\TM{p}: \|u_p-v_p\|<r\}$. Moreover, given $x\in\cM$, we denote by $\TM{x}$ the tangent space at $x$, and by $\tm$ the tangent bundle.

The distance between two points on the manifold $\cM$ is denoted by $\dist(x,y)$, which is defined as the infimum of the length of the piecewise smooth curves that connect $x$ and $y$ on $\cM$. We define $\cX(\cM)$ as the set of vector fields on $\cM$, and for all $X,Y\in\cX(\cM)$, we define $\nabla_X Y$ as the Levi-Civita connection of $X$ and $Y$. For a fixed vector field $X\in\cX(\cM)$, we say $H:\tm\mapsto\tm$ is the Jacobian of $X$ if $H(x):\TM{x}\rightarrow\TM{x}$ is a linear map satisfying $H(x)[v_x] = \nabla_{v_x} X(x)$ for all $v_x\in\TM{x}$. 

For a given smooth curve $\gamma: [0,1]\rightarrow \cM$ and a vector field $X$, we say that $X$ is parallel along $\gamma$ if $\nabla \dot{\gamma}X = 0$, where $\dot{\gamma}(t)\in T_{\gamma(t)}\cM$. Specifically, we call $\gamma$ a geodesic on $\cM$ if $\nabla_{\dot{\gamma}}\dot{\gamma}=0$. Assuming a geodesic $\gamma$ satisfies $\gamma(0)=x$, $\dot\gamma(0)=\eta$, and $\gamma(1)$ exists, we define the exponential map at $x$ for $\eta\in\TM{x}$ as $\Exp_x(\eta)=\gamma(1)$.

We define a retraction $\cR:\tm\rightarrow\cM$ as a $C^2$-mapping that satisfies (1) $\cR(0_x) = x$ for all $x\in\cM$, and (2) $\frac{\rmd}{\rmd t}\cR(t\xi_x)|_{t=0}=\xi_x$ for all $\xi_x\in\TM{x}$. We denote by $\cR_x$ the restriction of $\cR$ to $\TM{x}$, and by $\cT:\tm\oplus\tm\rightarrow \tm$ the vector transport associated with retraction $\cR$. Specifically, $\cT$ is a $C^2$-mapping that satisfies (1) $\cT_{\eta_x}\xi_x\in\TM{\cR_x(\eta_x)}$, and (2) $\cT_{\eta_x}(\cdot)$ is linear. If $\cR^{-1}_x y$ exists for $x,y\in\cM$, we define $\cT_{x}^{y}$ as $\cT_{\cR^{-1}_x y}(\cdot)$. A vector transport $\cT$ is called isometric if $\inprod{\cT_{\eta_x}\zeta_x}{\cT_{\eta_x}\xi_x} = \inprod{\zeta_x}{\xi_x}$ for all $\zeta_x,\xi_x\in\TM{x}$. Given any retraction $\cR$, we can define a vector transport, denoted by $\cT_{\cR}$, by $\cT_{\cR_{\eta_x}}\xi_x := \frac{\rmd}{\rmd t}\cR_x(\eta_x+t\xi_x)|_{t = 0} $. An important retraction is the exponential map $\Exp$. It is noted that given a curve $\gamma$ and $\eta\in \TM{\gamma(0)}$, there exists a unique parallel vector field $X_\eta$ along $\gamma$ such that $X_\eta(0)=\eta$. We define the parallel transport along $\gamma$ as $\cP_\gamma^{0\to t}\eta = X_\eta(t)$. If $\gamma$ is the geodesic, we know $\cP_\gamma^{0\to t}\dot\gamma(0) = \dot\gamma(t)$. If the minimizing geodesic $\gamma$ between $x$ and $y$ in $\cM$ is unique, we say $\cP_x^y$ is the parallel transport from $x$ to $y$ along the geodesic. We highlight that the parallel transport $\cP_\gamma^{0\to t}$ is a linear isometry, i.e., $\cP_\gamma^{0\to t}$ is linear and satisfies $\inprod{\cP_\gamma^{0\to t}{\eta_x}}{\cP_\gamma^{0\to t}\zeta_x} = \inprod{\zeta_x}{\xi_x}$ for all $\zeta_x,\xi_x\in\TM{x}$.

Let $\varphi$ be a smooth function on $\cM$, and we define $\grad \varphi$ to be the Riemannian gradient of $\varphi$. Moreover, if $G:\cM\rightarrow\cN$ is a smooth map between two Riemannian manifolds $\cM,\cN$, we use $\rmd G|_{x}:\TM{x}\mapsto T_{G(x)}\cN$ to denote the differential of $G$ at $x\in\cM$.

In the following, we list some results on Riemannian manifold $\cM$ that are useful in the convergence analysis. We assume $\cU$ is a compact subset of $\cM$.

\begin{lem}[\cite{daniilidis2018self}, Lemma 2.3]\label{lemma:1}
There exists a constant $\tilr > 0$, such that for any $x\in\cU$, $\Exp_x^{-1}:\cB_{\cM}(x,2\tilr)\rightarrow \cB_{\TM{x}}(0,2\tilr)$ is a diffeomorphism. 
\end{lem}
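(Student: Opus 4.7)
The plan is to combine the pointwise positivity of the injectivity radius with a compactness argument on $\cU$. At any fixed $x\in\cM$, the differential $\rmd\Exp_x|_{0_x}$ equals the identity on $\TM{x}$, so the inverse function theorem produces some $r_x>0$ such that $\Exp_x$ is a smooth diffeomorphism from $\cB_{\TM{x}}(0,r_x)$ onto an open neighborhood of $x$. Gauss's lemma then identifies that neighborhood with the metric ball $\cB_\cM(x,r_x)$ whenever $r_x$ does not exceed the injectivity radius $\mathrm{inj}(x)>0$, since along radial geodesics $\dist(x,\Exp_x(\eta))=\|\eta\|$ in this regime. This yields, for every $r\le\mathrm{inj}(x)$, that $\Exp_x^{-1}\colon\cB_\cM(x,r)\to\cB_{\TM{x}}(0,r)$ is a diffeomorphism.

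The next step is to upgrade this pointwise statement to a lower bound that is uniform over $\cU$. Here I would invoke the classical result that the injectivity-radius function $x\mapsto\mathrm{inj}(x)$ is continuous on a smooth Riemannian manifold (see, e.g., Klingenberg's Riemannian geometry text). Because $\mathrm{inj}$ is strictly positive and $\cU$ is compact, the minimum $\rho:=\min_{x\in\cU}\mathrm{inj}(x)$ is attained and strictly positive. Setting $\tilr:=\rho/3$ then ensures $2\tilr<\rho\le\mathrm{inj}(x)$ for every $x\in\cU$, and the pointwise diffeomorphism property from the first step delivers exactly the conclusion asserted in the lemma.

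The only delicate ingredient, and hence the main obstacle for a fully self-contained proof, is the continuity (or at least lower semicontinuity) of the injectivity radius, which is what allows the pointwise positive lower bound to be turned into a uniform one over the compact set. Everything else reduces to the inverse function theorem applied at $0_x$ and to Gauss's lemma identifying the image of a small tangent ball with a geodesic ball of the same radius; no additional structural assumption on $\cM$ beyond smoothness and the standing Riemannian metric is needed.
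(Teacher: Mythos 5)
The paper does not prove this lemma at all: it is imported verbatim from \cite{daniilidis2018self} (Lemma 2.3), so there is no in-paper argument to compare against. Your proposal is the standard proof of that cited result and is essentially correct: inverse function theorem at $0_x$, Gauss's lemma to identify $\Exp_x\bigl(\cB_{\TM{x}}(0,r)\bigr)$ with $\cB_\cM(x,r)$ for $r$ below the injectivity radius, and continuity plus positivity of $x\mapsto\mathrm{inj}(x)$ on the compact set $\cU$ to extract a uniform $\tilr$.

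One small correction: your closing claim that ``no additional structural assumption on $\cM$ beyond smoothness'' is needed is not quite right. For $\Exp_x$ restricted to $\cB_{\TM{x}}(0,r)$ to map \emph{onto} the full metric ball $\cB_\cM(x,r)$ — which is what makes $\Exp_x^{-1}$ well defined on all of $\cB_\cM(x,2\tilr)$ — you need every point at distance less than $r$ from $x$ to be reached by a minimizing geodesic, i.e.\ Hopf--Rinow, hence completeness of $\cM$. This is harmless here because the paper works throughout with a complete, connected manifold (\cref{asmp:basic}), but the hypothesis should be acknowledged rather than dismissed.
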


\begin{lem}\cite[Lemma 4.2]{zhou2021semi}\label{lem:basic}
There exist positive constants $C > 0$ and constant $\tilr > 0$, such that for all $x\in\cU$ and $v_x\in B_{\TM{x}}(0,\tilr)$, it has
\[
\dist\left(\cR_x(v_x),x\right) \leq C\|v_x\| \textand \dist\left(\cR_x(v_x),\Exp_x(v_x)\right) \leq C\|v_x\|^2.
\]
\end{lem}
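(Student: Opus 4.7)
The plan is to prove the two bounds with different techniques: for the linear bound, a direct curve-length estimate using the first-order properties of the retraction; for the quadratic bound, a reduction to Euclidean Taylor expansion via the exponential map, exploiting the fact that $\cR_x$ and $\Exp_x$ agree to first order at the origin.

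For the first inequality, consider the smooth curve $\gamma(t) = \cR_x(tv_x)$, $t \in [0,1]$, which joins $x$ to $\cR_x(v_x)$. Since the Riemannian distance is bounded by the length of any connecting curve,
\[
\dist(\cR_x(v_x), x) \le \int_0^1 \|d\cR_x(tv_x)[v_x]\|\, dt.
\]
The retraction property $d\cR_x(0_x) = \mathrm{id}_{\TM{x}}$, together with continuity of the second derivatives of $\cR$ on the tangent bundle and compactness of $\cU$, provides, after shrinking $\tilr$, a uniform bound $\|d\cR_x(v)[w]\| \le C_0 \|w\|$ valid for all $x \in \cU$, $v \in B_{\TM{x}}(0, \tilr)$ and $w \in \TM{x}$. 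Substituting $w = v_x$ yields the first estimate $\dist(\cR_x(v_x), x) \le C_0 \|v_x\|$.

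For the second inequality, shrink $\tilr$ further so that the first estimate combined with Lemma 1 ensures $\cR_x(v_x) \in B_\cM(x, 2\tilr)$, where $\Exp_x^{-1}$ is a diffeomorphism. Define $\phi_x(v) = \Exp_x^{-1}(\cR_x(v)) - v$ on $B_{\TM{x}}(0, \tilr)$. Both $\cR_x$ and $\Exp_x$ fix $0_x \mapsto x$ and have differential $\mathrm{id}_{\TM{x}}$ at the origin, so the chain rule gives $\phi_x(0_x) = 0$ and $d\phi_x(0_x) = 0$. Because $\phi_x$ is $C^2$ in the Euclidean space $\TM{x}$, the standard second-order Taylor remainder produces a constant $C_1$ with $\|\phi_x(v)\| \le C_1 \|v\|^2$ on $B_{\TM{x}}(0, \tilr)$. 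Since $\Exp_x$ is $L$-Lipschitz on $B_{\TM{x}}(0, 2\tilr)$ (its differential equals the identity at $0_x$), we conclude
\[
\dist(\cR_x(v_x), \Exp_x(v_x)) = \dist\!\bigl(\Exp_x(v_x + \phi_x(v_x)), \Exp_x(v_x)\bigr) \le L\|\phi_x(v_x)\| \le L C_1 \|v_x\|^2,
\]
and setting $C = \max(C_0, L C_1)$ delivers both claims of the lemma.

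The main obstacle is the uniformity of the constants $C_0$, $C_1$, $L$ and of the radius $\tilr$ across all $x \in \cU$. Because $\cR$ and $\Exp$ are globally $C^2$ maps on the tangent bundle, the first- and second-order derivative norms of $\cR_x$, $\Exp_x$, and hence of $\phi_x$, depend continuously on $x$, so they attain finite maxima on the compact set $\cU$. Together with the uniform injectivity radius supplied by Lemma 1, this yields the required uniform constants; all remaining steps are routine Taylor expansions and curve-length computations.
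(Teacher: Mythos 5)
Your argument is essentially correct, but note that the paper does not prove this lemma at all: it is imported by citation from Zhou et al.\ (Lemma 4.2 there), so there is no internal proof to compare against. Your two-part strategy is the natural one and works: the curve-length bound $\dist(\cR_x(v_x),x)\le\int_0^1\|\rmd\cR_x(tv_x)[v_x]\|\,\rmd t$ together with compactness of $\{(x,v):x\in\cU,\ \|v\|\le\tilr\}$ gives the linear estimate, and the map $\phi_x(v)=\Exp_x^{-1}(\cR_x(v))-v$, which vanishes to first order at $0_x$ because both $\cR_x$ and $\Exp_x$ are first-order rigid there, gives the quadratic estimate via the Taylor remainder. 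Two small points to tighten. First, the parenthetical justification that $\Exp_x$ is Lipschitz ``since its differential equals the identity at $0_x$'' is not the right reason; the Lipschitz constant comes from a uniform bound on $\|\rmd\Exp_x(v)\|$ over the compact set $\{(x,v):x\in\cU,\ \|v\|\le 2\tilr\}$, exactly as in your first estimate, not from the value of the differential at the origin alone. Second, when you ``shrink $\tilr$'' to guarantee $\cR_x(v)\in\cB_\cM(x,2\tilr)$, you are shrinking the domain radius while the injectivity radius from Lemma 1 stays fixed; it is cleaner to keep the Lemma 1 radius $\tilr_0$ fixed and restrict $v$ to $\|v\|<\min\{\tilr_0,2\tilr_0/C_0\}$, which the existential form of the lemma's conclusion permits. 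You should also record explicitly that $\|v+\phi_x(v)\|=\dist(x,\cR_x(v))<2\tilr$, so the segment joining $v$ and $v+\phi_x(v)$ stays in the ball where the Lipschitz bound for $\Exp_x$ applies. With these clarifications the proof is complete and self-contained.
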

\begin{lem}\cite[Lemma 2.4]{daniilidis2018self}\label{lem:coslaw}
There exists a constant $K > 0$ such that $\forall x,y,z\in\cU, y,z\in\cB_{\cM}(x,\tilr)$,
\[
\left|\|\eta_x-\xi_x\|^2-\dist(y,z)^2 \right| \leq K\|\xi_x\|^2\|\eta_x\|^2, 
\]
where $\eta_x := \Exp^{-1}_x y$ and $\xi_x := \Exp^{-1}_x z$.
\end{lem}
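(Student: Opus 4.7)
The plan is to work locally at each $x\in\cU$, establish the desired fourth-order estimate via the Jacobi-field expansion of the exponential map combined with the elementary vanishing properties of the error function, and then extract a uniform constant by compactness of $\cU$.

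Fix $x\in\cU$. By Lemma~\ref{lemma:1}, $\Exp_x$ is a diffeomorphism on $B_{\TM{x}}(0,2\tilr)$, so that $\eta:=\eta_x$ and $\xi:=\xi_x$ lie in $B_{\TM{x}}(0,\tilr)$ and the function
\[
G(\eta,\xi):=\dist(\Exp_x\eta,\Exp_x\xi)^2-\|\eta-\xi\|^2
\]
is smooth on $B_{\TM{x}}(0,\tilr)\times B_{\TM{x}}(0,\tilr)$. Two structural facts underpin the estimate: (a) $G(\eta,0)=G(0,\xi)=0$, because within the injectivity radius the radial geodesic realizes the distance exactly; and (b) the Riemann curvature tensor $R_x$ is antisymmetric in its first two arguments, i.e.\ $R_x(u,u)=0$.

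For the upper bound on $\dist(y,z)^2$ I would evaluate the length of the explicit auxiliary curve $\sigma(s):=\Exp_x((1-s)\eta+s\xi)$, $s\in[0,1]$, which joins $y$ and $z$. Using the standard Jacobi-field identity
\[
\|\rmd\Exp_x|_v(w)\|^2=\|w\|^2-\tfrac{1}{3}\inner{R_x(v,w)v}{w}+O(\|v\|^3\|w\|^2)
\]
with $v_s=(1-s)\eta+s\xi$ and $w=\xi-\eta$, fact (b) makes the expansion $R_x(v_s,w)=(1-s)R_x(\eta,\xi-\eta)+sR_x(\xi,\xi-\eta)$ collapse to $R_x(\eta,\xi)$ independently of $s$. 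The pair symmetries of $R_x$ then yield the algebraic simplification
\[
\int_0^1\inner{R_x(v_s,w)v_s}{w}\,\rmd s=\inner{R_x(\eta,\xi)\eta}{\xi},
\]
whose right-hand side is bounded by $\|R_x\|\cdot\|\eta\|^2\|\xi\|^2$. Integrating and squaring then produces $\dist(y,z)^2\leq\|\eta-\xi\|^2+K_1(x)\|\eta\|^2\|\xi\|^2$ with a continuous constant $K_1(x)$. The matching lower bound follows by running the analogous Jacobi expansion along the true minimizing geodesic from $y$ to $z$ (or, equivalently, by applying the same curve-length argument in normal coordinates centered at $y$). Passing to $K:=\sup_{x\in\cU}K_1(x)$ via compactness concludes the proof.

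The main obstacle is controlling the $O(\|v\|^3\|w\|^2)$ Jacobi-field remainder with the \emph{right} factorization $\|\eta\|^2\|\xi\|^2$: bounding it crudely by $(\|\eta\|+\|\xi\|)^5$ is too weak, and in particular fails at $\xi=0$, where fact (a) forces $G$ to vanish identically. The fix is to apply the antisymmetries of $R_x$ and its covariant derivatives systematically to every higher-order curvature term that arises, in exactly the manner used for the leading contribution; the net effect is that every nonzero term in the full curvature expansion of $G$ carries at least two factors of $\eta$ and two of $\xi$, matching the vanishing loci dictated by (a).
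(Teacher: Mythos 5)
First, note that the paper offers no proof of this lemma: it is quoted verbatim from \cite[Lemma 2.4]{daniilidis2018self}, so your proposal is being judged on its own merits rather than against an argument in the text. Your structural observations are the right ones --- the function $G(\eta,\xi)=\dist(\Exp_x\eta,\Exp_x\xi)^2-\|\eta-\xi\|^2$ is smooth near the origin by \cref{lemma:1}, it vanishes on the cross $\{\eta=0\}\cup\{\xi=0\}$, and your leading-order computation is correct: bilinearity and antisymmetry do collapse $R_x(v_s,\xi-\eta)$ to $R_x(\eta,\xi)$, and the pair symmetries reduce the integrated curvature term to $\inprod{R_x(\eta,\xi)\eta}{\xi}$, which is bounded by $\|R_x\|\,\|\eta\|^2\|\xi\|^2$. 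However, there are two genuine gaps. (1) The remainder. You correctly identify that the $O(\|v\|^3\|w\|^2)$ tail of the Jacobi expansion does not factor as $\|\eta\|^2\|\xi\|^2$, but your proposed repair --- ``apply the antisymmetries systematically to every higher-order curvature term'' --- is precisely the hard part and is asserted rather than executed. The expansion of $\|\rmd\Exp_x|_v(w)\|^2$ contains, beyond $\nabla^kR$ terms, terms quadratic and higher in curvature (e.g.\ $\|R(w,v)v\|^2$ and $\inprod{R(R(w,v)v,v)v}{w}$); some of these do factor correctly only after nonobvious uses of the pair symmetry, and controlling the full remainder (not just finitely many displayed terms) this way is not routine. (2) The lower bound. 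Running ``the analogous expansion along the true minimizing geodesic from $y$ to $z$'' does not work as stated: that geodesic pulls back under $\Exp_x^{-1}$ to an unknown curve $c(t)$ from $\eta$ to $\xi$, and bounding the resulting curvature integrals by $\|\eta\|^2\|\xi\|^2$ requires knowing that $c$ stays within $O(\|\eta\|\,\|\xi\|)$ of the straight segment --- an estimate of the same strength as the lemma itself, so the argument is circular; the crude Rauch-type bound $\|\rmd\Exp_x|_v(w)\|\geq(1-C\|v\|^2)\|w\|$ only yields an error of order $\|\eta-\xi\|^2$, which fails the required factorization at $\xi=0$.

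The clean way to close both gaps is to abandon the explicit curve-length computation and exploit your fact (a) fully through a two-variable Taylor argument. One checks that $G$ \emph{and its first partial derivatives} vanish on the cross: $G(\eta,0)\equiv 0$ gives $\partial_\eta G(\eta,0)=0$ by differentiation, while $\partial_\xi G(\eta,0)=0$ follows from the first variation formula $\nabla_z\dist(y,z)^2=-2\Exp_z^{-1}y$ together with $\rmd\Exp_x|_0=\mathrm{id}$ (and symmetrically in $\eta$). Writing $G$ radially as $h(s,t)=G(s\eta_0,t\xi_0)$ for unit vectors $\eta_0,\xi_0$, these conditions give $h(s,0)=h(0,t)=0$ and $\partial_t h(s,0)=\partial_s h(0,t)=0$; the iterated integral remainder $h(s,t)=\int_0^s\!\int_0^t\partial_u\partial_v h$, applied once more to $\partial_u\partial_v h$ (which again vanishes on the axes), yields $|h(s,t)|\leq Ks^2t^2$ with $K$ controlled by fourth mixed derivatives of $G$, uniform over unit directions and over $x\in\cU$ by compactness. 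This recovers the stated inequality in both directions simultaneously and avoids the remainder bookkeeping entirely.
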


\begin{lem}\cite[Lemma 6]{sun2019escaping}\label{lem:2parallel}
    If the sectional curvature of $\cM$ is bounded, then there exists a constant $\rho_1$ such that for any $x,y,z\in\cM, \dist(x,y), \dist(y,z), \dist(x,z) < 2\tilr$, $\xi_x\in\TM{x}$, it has
    \[
    \left\| \cP_y^z\cP_x^y\ \xi_x-\cP_x^z\ \xi_x \right\| \leq \rho_1 \dist(y,z)\dist(x,y)\|\xi_x\|.
    \]
\end{lem}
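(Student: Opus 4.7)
The plan is to identify the quantity to bound as a holonomy around the geodesic triangle with vertices $x, y, z$, and then estimate this holonomy via a geodesic-variation argument using the curvature bound. Since $\cP_x^z$ is a linear isometry, we have $\|\cP_y^z\cP_x^y\xi_x - \cP_x^z\xi_x\| = \|\Phi\xi_x - \xi_x\|$, where $\Phi := \cP_z^x\cP_y^z\cP_x^y$ is parallel transport around the piecewise geodesic loop $x \to y \to z \to x$. It thus suffices to bound the operator norm $\|\Phi - \id\|_{\mathrm{op}}$ by $\rho_1\,\dist(x,y)\,\dist(y,z)$.

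To exploit curvature, I would continuously deform the triangle by sliding one vertex along a side: for $t\in[0,1]$, set $z_t := \Exp_y(t\,\Exp_y^{-1}(z))$ and define $\Phi_t := \cP_{z_t}^x\cP_y^{z_t}\cP_x^y$, the holonomy around $x\to y\to z_t\to x$. At $t=0$, $z_0=y$ and the two legs meeting at $y$ degenerate, giving $\Phi_0=\id$; at $t=1$, $\Phi_1=\Phi$. Writing $\Phi - \id = \int_0^1 \tfrac{\rmd}{\rmd t}\Phi_t\,\rmd t$, the problem reduces to a pointwise bound on $\bigl\|\tfrac{\rmd}{\rmd t}\Phi_t\bigr\|_{\mathrm{op}}$. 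Note that all geodesics involved stay within the injectivity radius by Lemma~\ref{lemma:1} (after shrinking $\tilr$ if necessary so that the ambient ball is geodesically convex), so every piece of parallel transport above is unambiguously defined and varies smoothly in $t$.

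The derivative $\tfrac{\rmd}{\rmd t}\Phi_t$ measures how parallel transport along the varying geodesic leg from $x$ to $z_t$ changes as its endpoint moves with velocity $\Exp_y^{-1}(z)$. The standard geodesic-variation formula expresses this change through the Jacobi field $J_t$ along the leg generated by the endpoint variation, together with an integral involving the Riemann curvature tensor $R$ evaluated on $J_t$ and the tangent to the leg. Because the sectional curvature is bounded on $\cU$ and all points lie within the controlled ball provided by Lemma~\ref{lemma:1}, we obtain uniform bounds $\|R\|\leq C_R$ and a uniform linear dependence of $J_t$ on its boundary velocity of magnitude $\|\Exp_y^{-1}(z)\| = \dist(y,z)$. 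Combining these yields $\|\tfrac{\rmd}{\rmd t}\Phi_t\|_{\mathrm{op}} \leq c\,\dist(x,y)\,\dist(y,z)$ with $c$ depending only on $\cU$, and integrating in $t$ produces the desired $\rho_1$.

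The main obstacle is the Jacobi-field bookkeeping needed to show that $\tfrac{\rmd}{\rmd t}\Phi_t$ vanishes to first order in $\dist(x,y)$, so that the bound is a \emph{product} of two side lengths rather than just one. This vanishing reflects the geometric fact that when $x=y$ the loop collapses and $\Phi_t \equiv \id$ for every $t$, forcing the variation to pick up an extra factor of $\dist(x,y)$. Making this precise requires uniform control of the curvature tensor and of Jacobi fields over the compact neighborhood, which is available from the compactness of $\cU$ together with the uniform injectivity-radius bound $2\tilr$.
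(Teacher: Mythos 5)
This lemma is not proved in the paper at all: it is imported verbatim from \cite[Lemma 6]{sun2019escaping}, so there is no in-paper argument to compare against. Judged on its own, your strategy is the standard one for such statements: rewrite the quantity as $\|\Phi\xi_x-\xi_x\|$ with $\Phi$ the holonomy of the geodesic triangle, deform the triangle through the family of loops $x\to y\to z_t\to x$, and bound $\frac{\rmd}{\rmd t}\Phi_t$ by a curvature integral over the swept surface. This is exactly the ``holonomy is bounded by curvature times area'' estimate, and it does yield the lemma, since the area of the triangle is $O(\dist(x,y)\,\dist(y,z))$.

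The one place your sketch is genuinely incomplete is the step you yourself flag as the main obstacle, and the resolution is not the Jacobi-field bookkeeping you describe but the antisymmetry of the curvature tensor. The naive estimate $\|R(\dot\gamma, J_t)\|\leq C_R\|\dot\gamma\|\,\|J_t\|$ along the leg from $x$ to $z_t$ gives $\bigl\|\tfrac{\rmd}{\rmd t}\Phi_t\bigr\|\lesssim C_R\,\dist(x,z_t)\,\dist(y,z)$, and since $\dist(x,z_t)$ can be comparable to $\dist(y,z)$ even when $\dist(x,y)$ is tiny, this only produces $\dist(y,z)^2+\dist(x,y)\dist(y,z)$, which is \emph{not} dominated by the claimed product. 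To recover the factor $\dist(x,y)$ you must use that $R(\dot\gamma,J_t)$ depends only on $\dot\gamma\wedge J_t$, i.e.\ only the component of $J_t$ perpendicular to the leg contributes; that perpendicular component has magnitude of order $\dist(x,y)\dist(y,z)/\dist(x,z_t)$ (it vanishes identically when $x=y$, which is the quantitative form of your collapsing-loop heuristic), and the cancellation with $\|\dot\gamma\|=\dist(x,z_t)$ then gives the correct pointwise bound $c\,\dist(x,y)\dist(y,z)$. Without this observation the argument as written proves a strictly weaker inequality, so you should either insert the antisymmetry step explicitly or replace the derivative computation with a direct appeal to a quantitative holonomy--area theorem together with a comparison bound on the area of a geodesic triangle in bounded curvature.
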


\begin{lem}\cite[Lemma 3]{sun2019escaping}\label{lem:triangle_rule}
    If the sectional curvature of $\cM$ is bounded, then there exists a constant $c_0$ such that  $\forall x\in\cM, \xi_x,\eta_x\in\TM{x}, \|\eta_x\| < 2\tilr$, we have
    \[
    \dist(\Exp_x(\xi_x+\eta_x),\Exp_y(\cP_x^y\ \xi_x))
    \leq c_0 \min\{\|\xi_x\|,\|\eta_x\|\}(\|\xi_x\|+\|\eta_x\|)^2,
    \]
    where $y = \Exp_x(\eta_x)$.
\end{lem}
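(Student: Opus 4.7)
The plan is to construct two explicit variational curves from $p_1 := \Exp_x(\xi+\eta)$ to $p_2 := \Exp_y(\cP_x^y\xi)$ in $\cM$ and bound the length of each by Jacobi field comparison, yielding separate bounds $C\|\eta\|(\|\xi\|+\|\eta\|)^2$ and $C\|\xi\|(\|\xi\|+\|\eta\|)^2$; the minimum of the two is exactly the claim. The structural principle behind both constructions is that $p_1=p_2$ identically whenever $\xi=0$ (both collapse to $y$) or $\eta=0$ (both collapse to $\Exp_x(\xi)$), so a well-chosen variational curve whose ``Euclidean'' Jacobi-residual $J(0)+J'(0)$ vanishes contributes only a pure curvature correction, which automatically inherits the missing small factor.

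For the $\|\eta\|$ bound, let $\gamma(s):=\Exp_x(s\eta)$, $s\in[0,1]$, and define $\beta(s):=\Exp_{\gamma(s)}(W(s))$ with $W(s):=\cP_x^{\gamma(s)}(\xi+(1-s)\eta)$; a direct check gives $\beta(0)=p_1$ and $\beta(1)=p_2$. The derivative equals $J_s(1)$, where $J_s$ is the Jacobi field along $t\mapsto\Exp_{\gamma(s)}(tW(s))$ with $J_s(0)=\dot\gamma(s)$ and $J_s'(0)=\nabla_s W(s)=-\dot\gamma(s)$; the second identity uses that $\cP_x^{\gamma(s)}$ applied to a fixed tangent vector at $x$ is parallel along $\gamma$, so the two parallel-transported pieces of $W(s)$ vanish under $\nabla_s$ and only the explicit $(-1)$ coefficient of $\cP_x^{\gamma(s)}\eta = \dot\gamma(s)$ survives. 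Since $J_s(0)+J_s'(0)=0$ identically, $J_s(1)$ is pure curvature correction, and the standard Jacobi field comparison under bounded sectional curvature on the compact set $\cU$ yields $\|J_s(1)\|\leq C\|W(s)\|^2\|\dot\gamma(s)\|\leq C(\|\xi\|+\|\eta\|)^2\|\eta\|$; integrating in $s$ gives $\dist(p_1,p_2)\leq C\|\eta\|(\|\xi\|+\|\eta\|)^2$.

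For the $\|\xi\|$ bound, use the base geodesic $\delta(s):=\Exp_y(s\cP_x^y\xi)$ from $y$ to $p_2$ and the variation $\alpha(s):=\Exp_{\delta(s)}((1-s)\cP_y^{\delta(s)}\Exp_y^{-1}(p_1))$, which satisfies $\alpha(0)=p_1$ and $\alpha(1)=p_2$. Its Jacobi field has $J_s(0)=\dot\delta(s)=\cP_y^{\delta(s)}\cP_x^y\xi$ and $J_s'(0)=-\cP_y^{\delta(s)}\Exp_y^{-1}(p_1)$, so the residual is $J_s(0)+J_s'(0)=\cP_y^{\delta(s)}(\cP_x^y\xi-\Exp_y^{-1}(p_1))$. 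The essential auxiliary estimate is
\[
\bigl\|\cP_x^y\xi-\Exp_y^{-1}(p_1)\bigr\|\leq C\|\xi\|\|\eta\|(\|\xi\|+\|\eta\|),
\]
obtained by Taylor-expanding the chart-change map $u\mapsto\Exp_y^{-1}(\Exp_x(\eta+u))$ at $u=0$: its linear part equals $d\Exp_x|_\eta(u)=\cP_x^y u+O(\|\eta\|^2\|u\|)$ by the standard Jacobi field identity for $d\Exp_x$, and all higher-order terms in $u$ must vanish identically at $\eta=0$ (where the chart change is the identity) and hence carry an explicit $\|\eta\|$ factor. This residual bound simultaneously implies $\|\Exp_y^{-1}(p_1)\|=O(\|\xi\|)$ and hence $\|V(s)\|=O(\|\xi\|)$; combining with the Jacobi curvature correction $O(\|V(s)\|^2\|\xi\|)=O(\|\xi\|^3)$ yields $\|J_s(1)\|\leq C\|\xi\|(\|\xi\|+\|\eta\|)^2$, so integrating gives $\dist(p_1,p_2)\leq C\|\xi\|(\|\xi\|+\|\eta\|)^2$. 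Taking the minimum of the two bounds delivers the claim with $c_0$ depending only on the sectional curvature bound on $\cU$; the main technical obstacle is the auxiliary chart-change estimate, whose sharpness (the full factor $\|\xi\|\|\eta\|(\|\xi\|+\|\eta\|)$) hinges on the simultaneous vanishing in the two limits $\eta\to 0$ and $\xi\to 0$ and must be extracted by tracking every Taylor order of the chart change together with the Jacobi identity for $d\Exp_x|_\eta$.
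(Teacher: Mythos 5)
The paper does not prove this lemma; it is quoted verbatim from the cited reference (Lemma~3 of Sun et al.), so there is no in-paper proof to compare against. Judged on its own merits, your strategy is sound and is of the same Jacobi-field-comparison flavor as the standard arguments for such ``double exponential'' estimates. The first construction is clean and correct: the endpoints of $\beta$ check out, $\nabla_s W(s)=-\dot\gamma(s)$ is right because the two parallel-transported summands of $W$ are covariantly constant along the geodesic $\gamma$ and $\cP_x^{\gamma(s)}\eta=\dot\gamma(s)$, so the Euclidean part $J_s(0)+J_s'(0)$ of the Jacobi field vanishes and only the curvature correction $O(\|W(s)\|^2\|\dot\gamma(s)\|)$ survives, giving the $\|\eta\|(\|\xi\|+\|\eta\|)^2$ bound after integration.

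Two caveats on the second half. First, your auxiliary estimate $\|\cP_x^y\xi-\Exp_y^{-1}(p_1)\|\leq C\|\xi\|\|\eta\|(\|\xi\|+\|\eta\|)$ is not merely an ingredient: combined with the near-isometry of $\Exp_y$ on a small ball (e.g.\ the paper's \cref{lem:coslaw}) it \emph{already} yields $\dist(p_1,p_2)\leq C\|\xi\|\|\eta\|(\|\xi\|+\|\eta\|)\leq C\min\{\|\xi\|,\|\eta\|\}(\|\xi\|+\|\eta\|)^2$ in one step, so the entire second variational construction is redundant, and the real content of that half of your argument sits in the Taylor-expansion sketch of the chart-change map, which you only outline (the claim that all $u$-derivatives of order $\geq 2$ of $u\mapsto\Exp_y^{-1}(\Exp_x(\eta+u))$ are $O(\|\eta\|)$ needs the smooth-dependence-plus-compactness argument spelled out). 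Second, all of your Jacobi comparison constants are uniform only when $\|\xi\|+\|\eta\|$ ranges over a bounded set and the relevant points stay in a region of bounded geometry with injectivity radius bounded below; the lemma as stated quantifies over all $x\in\cM$ and places no bound on $\|\xi_x\|$, so strictly speaking your constant $c_0$ (like that of the cited source) is only uniform under the additional restriction to the compact neighborhood $\cU$ and bounded $\|\xi_x\|$ — which is how the paper actually invokes the lemma, so this is a defect of the transcribed statement rather than of your argument, but it is worth making explicit.
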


\section{The Riemannian Anderson Mixing (RAM) Algorithm}
\label{sec:alg}
\begin{algorithm}
    \caption{Riemannian Anderson Mixing Method}
    \label{alg:RAM}
    \begin{algorithmic}
        \REQUIRE $x_0 \in\cM, \epsilon, \beta_k > 0, m\in\N^*, k = 1$.
        \STATE $r_0 = - F(x_0)$.
        \STATE $x_1 = \cR_{x_0}(r_0)$.
        \WHILE{$\|F(x_k)\| \geq \epsilon$}
        \STATE $m_k = \min\set{m,k}$.
        \STATE $\dx{k-i}{k} = \cT_{x_{k-1}}^{x_k}\dx{k-i}{k-1} \in\TM{x_k}, i = 1,\ldots,m_k$.
        \STATE $\dr{k-1}{k} = r_k-\cT_{x_{k-1}}^{x_k}\ r_{k-1}\in\TM{x_k}$.
        \IF{$k \geq 2$}
        \STATE $\dr{k-i}{k} = \cT_{x_{k-1}}^{x_k}\dr{k-i}{k-1} \in\TM{x_k}, i = 2,\ldots,m_k$.
        \ENDIF
        \STATE $X_k = [\dx{k-m_k}{k},\ldots,\dx{k-1}{k}], R_k = [\dr{k-m_k}{k},\ldots,\dr{k-1}{k}]$.
        \STATE $r_k = -F(x_k)$.
        \STATE $\Gamma_k = \argmin_{\Gamma\in\R^{m_k}}\|r_k-R_k\Gamma\|$.
        \STATE $\Bar{r}_k = r_k-R_k\Gamma_k\in\TM{x_k}, \dx{k}{k} = -X_k\Gamma_k+\beta_k\Bar{r}_k \in\TM{x_k}$.
        \STATE $x_{k+1} = \cR_{x_k}(\dx{k}{k})$.
        \STATE $k = k+1$.
        \ENDWHILE
    \end{algorithmic}
\end{algorithm}
In this section, we provide a detailed description of the Riemannian Anderson mixing (RAM) algorithm and its convergence analysis. Throughout the paper, we define $r_k = -F(x_k)\in \TM{x_k}$, where $x_k\in\cM$ denotes the $k$-th iteration point on a complete, connected manifold $\cM$. At each iteration, we apply vector transport to each vector $\dx{k-i}{k-1}$ and $\dr{k-i}{k-1}$ from the tangent space at $x_{k-i}$ to the tangent space at $x_k$, denoted by $\dx{k-i}{k}$ and $\dr{k-i}{k}$, respectively. This leads to two matrices $X_k = [\dx{k-m_k}{k},\ldots,\dx{k-1}{k}]$ and $R_k = [\dr{k-m_k}{k},\ldots,\dr{k-1}{k}]$ in $\TM{x_k}$, where $m_k$ is the memory length at the $k$-th iteration. Let $\|\cdot\|$ be the norm in $\TM{x_k}$. We determine the extrapolation coefficients $\Gamma_k$ by solving the optimization problem $\Gamma_k = \argmin\|r_k-R_k\Gamma\|$. Using the extrapolation coefficients $\Gamma_k$, we compute a new direction $\dx{k}{k} = \beta_k r_k - (X_k+\beta_kR_k)\Gamma_k\in\TM{x_k}$. Finally, we obtain a new point $x_{k+1} = \cR_{x_k}(\dx{k}{k})$ by applying a predefined retraction $\cR$ on $\cM$. The detailed RAM method is presented in Algorithm \ref{alg:RAM}.

The RAM algorithm computes the update direction in the tangent space of the current point by utilizing an adaptive linear combination of previous iterations. This adaptive extrapolation is similar to classical AM in Euclidean space, exploiting second-order information and accelerating convergence. Each iteration in RAM requires $2m_k$ vector transports and one retraction, in addition to solving a least-squares problem in the tangent space. A significant advantage of RAM is that it avoids the need to compute the fixed-point map $g$, as well as the inverse of exponential mapping or retraction, which can be expensive in practice.

\subsection{Assumptions}
In this section, we introduce the basic assumptions before proving the local convergence analysis of RAM.  
\begin{asmp}\label{asmp:basic}
$\cM$ is a complete and connected manifold with bounded sectional curvature and $\cU$ is a compact subset of $\cM$. Moreover, there exist some $\tilr>0$ and a neighborhood $\cU$ on $\cM$ such that the open ball $\cB_\cM(x^*,2\tilr)\subset \cU$ and $F(x^*)=0$ for some $x^*\in\cU$.
\end{asmp}
The following assumption provides a uniformly lower bound of the distance and the uniform boundedness of the vector transport in the compact subset $\cU$.
\begin{asmp}\label{asmp:retr}
There exist a retraction $R_x$ and a constant $\Tilde{C} > 0$ such that for all $x\in\cU$ and $\|v_x\| < \tilr$, we have
    \[\dist\left(\cR_x(v_x),x\right) \geq \Tilde{C}\|v_x\|.\]
\end{asmp}
\begin{rmk}
    The \cref{asmp:retr} holds naturally when the retraction $\cR$ is exactly the exponential map. As pointed out in \cite[Lemma 6]{ring2012optimization}, this uniform lower bound exists when the retraction has equicontinuous derivatives. In addition, this lower bound can be easily derived through the direct calculation for the case when $\cM$ is the sphere $\cS^{n-1}$, and the retraction is given by $\cR_x(v) = \frac{x+v}{\|x+v\|}$. For completeness, we give the proof in the supplementary material.
\end{rmk}
\begin{asmp}\label{asmp:unibound_vt}
    The vector transport $\cT$ is a $C^1$-mapping and there exists a constant $M_\cU > 0$ such that for any $x,y\in\cU$ (whenever $\cT_{x}^y$ is well defined), and $v_x\in\TM{x}$, we have $\|\cT_x^y v_x\|_y \leq M_\cU \|v_x\|_x$.
\end{asmp}
\begin{rmk}
The \cref{asmp:unibound_vt} holds when the vector transport is provided by parallel transport.
\end{rmk}
Similar to the Euclidean case in \cite{toth2015convergence,wei2021class}, we impose the "coercive" and "locally Lipschitz continuous" property on the vector field $F$ and the contractive property on the fixed point map $g(x):= \Exp_x(-F(x))$.
\begin{asmp}\label{asmp:F_lip}
There exist $0 < L_1 < L_2$ such that for all $x,y\in\cB_{\cM}(x^*, 2\tilr)$ such that $\dist(x,y) < 2\tilr$, we have
\[
L_1\dist(x,y) \leq \|F(x)-\cP_y^x F(y)\| \leq L_2\dist(x,y).
\]
\end{asmp}
\begin{asmp}\label{asmp:contraction}
There exists $0<\kappa<1$ such that for all $x,y\in\cU$, we have
\[
\dist(g(x),g(y)) \leq \kappa\dist(x,y).
\]
\end{asmp}
Define $H$ to be the Jacobian of the vector field $F$, and we further impose Lipschitz continuous assumption on $H$.
\begin{asmp}\label{asmp:jacobian}
There exits $L > 0$, such that for all $x,y\in\cU$, we have 
\[
\left\| \cP_y^x H(y) \cP_x^y - H(x) \right\| \leq L \dist(x,y).
\]
\end{asmp}
The final assumption concerns the uniform boundedness of the extrapolation coefficients $\Gamma_k$ in the RAM method. This assumption is commonly used for analyzing convergence in the Euclidean setting \cite[Assumption 2.1]{toth2015convergence}.  
\begin{asmp}\label{asmp:bounded_least_square}
There exists a positive constant $M_{\Gamma}$ such that  $\|\Gamma_k\|_{\infty} \leq M_{\Gamma}$ for all $ k\in\N$.
\end{asmp}

\subsection{Useful lemmas}
We give some results related to the iteration property of RAM that is needed for the convergence analysis. Due to the space limit, the proof of lemmas is given in the supplemental material.
The first lemma is an error estimation between the parallel transport $\cP$ and the vector transport $\cT$.
\begin{lem}\label{lem:para_vect_error}
Suppose \cref{asmp:retr} holds. There exists a constant $\rho>0$, such that for all $x,y\in\cU$ satisfying $\dist(x,y) < 2\tilr$ and $\|\cR_x^{-1}y\| < \tilr$, and $\forall v_x\in\TM{x}$, we have
\[
\|\cP_x^y\ v_x-\cT_x^y\ v_x\| \leq \rho\|v_x\|\dist(x,y)
\]
\end{lem}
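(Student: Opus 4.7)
The plan is to express the difference $\cP_x^y v_x - \cT_x^y v_x$ as the value at $t=1$ of a $\TM{x}$-valued $C^1$ path that vanishes at $t=0$, then bound its derivative uniformly in the compact set $\cU$ and apply the mean value inequality. The key observation is that both $\cP_x^y$ and $\cT_x^y$ reduce to the identity on $\TM{x}$ when $y=x$, so the difference is of order $\dist(x,y)$.

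Concretely, set $\eta_x := \cR_x^{-1} y \in \TM{x}$, which is well defined by hypothesis, and consider the $C^1$ curve $c(t) := \cR_x(t\eta_x)$, $t\in[0,1]$, joining $x$ to $y$. For an arbitrary $v_x \in \TM{x}$, define
\[
\phi_{v_x}(t) \; := \; \cP_{c(t)}^{x}\,\cT_{t\eta_x} v_x \; - \; v_x \;\in\; \TM{x},
\]
noting that $\cT_{t\eta_x}v_x \in \TM{c(t)}$ by the definition of vector transport and that $c(t)$ stays inside a small geodesic ball around $x$ for all $t\in[0,1]$ by Lemma~\ref{lem:basic}, so the parallel transport $\cP_{c(t)}^{x}$ is well defined. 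By the consistency $\cT_{0_x}=\mathrm{id}_{\TM{x}}$ and $\cP_x^x=\mathrm{id}_{\TM{x}}$ we have $\phi_{v_x}(0)=0$, and by the $C^1$-regularity of $\cT$ (\cref{asmp:unibound_vt}), of $\cR$, and of parallel transport along $C^1$ curves, $\phi_{v_x}$ is $C^1$ on $[0,1]$ with derivative linear in $v_x$.

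Next I would establish a uniform bound of the form
\[
\|\phi_{v_x}'(t)\| \;\leq\; \rho' \,\|v_x\|\,\|\eta_x\|, \qquad t\in[0,1],
\]
with $\rho'$ depending only on $\cU$. Linearity of $\phi_{v_x}'(t)$ in $v_x$ accounts for the factor $\|v_x\|$; the factor $\|\eta_x\|$ reflects that the curve $c$ has initial velocity $\eta_x$, so every first-order contribution to $\phi_{v_x}'(t)$ from differentiating $\cT_{t\eta_x}$ or $\cP_{c(t)}^x$ in $t$ scales linearly in $\|\eta_x\|$. The uniformity of $\rho'$ follows from the compactness of $\cU$ combined with the $C^1$-regularity of $\cR$ and $\cT$: working in a finite subcover of $\cU$ by coordinate charts in which $\cR$, $\cT$, and the Christoffel symbols governing $\cP$ are $C^1$ with uniformly bounded derivatives, the operator $\phi_{v_x}'(t)/\|v_x\|$ becomes a $C^0$ function of $(x,t\eta_x)$ on a compact set, which is bounded. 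By the mean value inequality in the finite-dimensional normed space $\TM{x}$,
\[
\|\phi_{v_x}(1)\| \;=\; \|\phi_{v_x}(1)-\phi_{v_x}(0)\| \;\leq\; \sup_{t\in[0,1]}\|\phi_{v_x}'(t)\| \;\leq\; \rho'\,\|v_x\|\,\|\eta_x\|.
\]
Since parallel transport is an isometry, $\|\cP_x^y v_x-\cT_x^y v_x\|=\|\cP_y^x\cT_x^y v_x-v_x\|=\|\phi_{v_x}(1)\|$, and \cref{asmp:retr} yields $\|\eta_x\|\leq \dist(x,y)/\tilde C$. Setting $\rho:=\rho'/\tilde C$ completes the argument.

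The main obstacle is the uniform bound in the middle step: one needs to rule out that $\rho'$ depends on the base point $x\in\cU$ or on the particular direction $\eta_x$. The cleanest way around this is to use compactness of $\cU$ and of the closed disk bundle $\{(x,\eta_x):x\in\cU,\ \|\eta_x\|\leq\tilr\}$, together with the $C^1$-regularity of the maps $(\eta_x,v_x)\mapsto \cT_{\eta_x}v_x$ and $(x,y)\mapsto \cP_x^y$ (the latter defined on a neighborhood of the diagonal on which Lemma~\ref{lemma:1} guarantees unique minimizing geodesics), and to extract bounds on the partial derivatives via a finite chart cover.
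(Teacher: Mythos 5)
Your proposal is correct and rests on the same ingredients as the paper's own proof: both $\cP_x^y$ and $\cT_x^y$ reduce to the identity at zero displacement, so the difference is first order in the displacement; uniformity of the constant comes from compactness of $\cU$ and a finite chart cover together with the $C^1$/smoothness of the transports; and \cref{asmp:retr} converts $\|\cR_x^{-1}y\|$ into $\dist(x,y)$. The only difference is packaging: the paper obtains the first-order estimate by splitting the chart representations as $\bigl(L^1_{\cP}(\hat{x},\hat{\xi})-L^1_{\cP}(\hat{x},0)\bigr)+\bigl(L^1_{\cT}(\hat{x},0)-L^1_{\cT}(\hat{x},\hat{\eta})\bigr)$ and using a Lipschitz bound about $0$, whereas you integrate a derivative bound along the curve $t\mapsto\cR_x(t\eta_x)$ via the mean value inequality.
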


There are multiple vector transport in RAM, and we need to characterize their deviations from parallel translations based on the above assumptions. We present two corollaries from \cref{lem:para_vect_error} and \cref{lem:2parallel}.
\begin{coro}\label{coro:multi_para_vect_err}
Suppose $x_1,x_2,\ldots,x_n\in\cU$ satisfying 
$\dist(x_i,x_{i+1}) < 2\tilr$ and $\|\cR_{x_i}^{-1}x_{i+1}\| < \tilr, 1\leq i\leq n-1$, $n\geq 2$ and 
that \cref{asmp:basic} ,\cref{asmp:retr} and \cref{asmp:unibound_vt} hold. If $v\in\TM{x_1}$, then
\[
\begin{aligned}
&\|\cP_{x_{n-1}}^{x_n}\cP_{x_{n-2}}^{x_{n-1}}\cdots\cP_{x_{1}}^{x_2}\ v-\cT_{x_{n-1}}^{x_n}\cT_{x_{n-2}}^{x_{n-1}}\cdots\cT_{x_{1}}^{x_2}\ v\| 
\leq \rho\|v\|\sum_{i=1}^{n-1}M_{\cU}^{i-1}\dist(x_i,x_{i+1}).   
\end{aligned}
\]
\end{coro}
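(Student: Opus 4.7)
The plan is to prove the corollary by induction on $n$, using Lemma \ref{lem:para_vect_error} as the base case and exploiting the fact that parallel transport is a linear isometry to preserve the induction step.

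For the base case $n = 2$, the inequality reduces to $\|\cP_{x_1}^{x_2} v - \cT_{x_1}^{x_2} v\| \leq \rho \|v\| \dist(x_1, x_2)$, which is exactly Lemma \ref{lem:para_vect_error} applied at $x = x_1, y = x_2$ (the hypotheses $\dist(x_1, x_2) < 2\tilr$ and $\|\cR_{x_1}^{-1} x_2\| < \tilr$ are in place).

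For the inductive step, I would set
\[
P_k v := \cP_{x_{k-1}}^{x_k}\cP_{x_{k-2}}^{x_{k-1}}\cdots\cP_{x_1}^{x_2} v, \qquad T_k v := \cT_{x_{k-1}}^{x_k}\cT_{x_{k-2}}^{x_{k-1}}\cdots\cT_{x_1}^{x_2} v,
\]
and perform the standard add/subtract split
\[
P_n v - T_n v = \cP_{x_{n-1}}^{x_n}\bigl(P_{n-1} v - T_{n-1} v\bigr) + \bigl(\cP_{x_{n-1}}^{x_n} - \cT_{x_{n-1}}^{x_n}\bigr) T_{n-1} v.
\]
Taking norms and using the triangle inequality, the first term is bounded by $\|P_{n-1} v - T_{n-1} v\|$ since parallel transport is a linear isometry; by the induction hypothesis this is at most $\rho\|v\|\sum_{i=1}^{n-2} M_\cU^{i-1}\dist(x_i, x_{i+1})$. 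The second term is bounded by $\rho\, \|T_{n-1} v\|\, \dist(x_{n-1}, x_n)$ via Lemma \ref{lem:para_vect_error}, and iterating Assumption \ref{asmp:unibound_vt} gives $\|T_{n-1} v\| \leq M_\cU^{n-2}\|v\|$. Summing the two bounds yields exactly $\rho\|v\|\sum_{i=1}^{n-1} M_\cU^{i-1}\dist(x_i, x_{i+1})$, closing the induction.

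There is no substantial obstacle here; the only small point to check is that the hypotheses of Lemma \ref{lem:para_vect_error} are satisfied at each consecutive pair $(x_{k-1}, x_k)$, which is guaranteed by the assumptions $\dist(x_i, x_{i+1}) < 2\tilr$ and $\|\cR_{x_i}^{-1} x_{i+1}\| < \tilr$ in the corollary's hypothesis, and that the intermediate vectors $T_{k-1} v$ to which the lemma is applied lie in the appropriate tangent spaces $\TM{x_{k-1}}$, which follows by construction of the vector transport. The isometry property of parallel transport is what prevents the error from amplifying in the first term of the split, so the geometric factor $M_\cU^{i-1}$ attaches only to the newly introduced discrepancy, giving the clean sum on the right-hand side.
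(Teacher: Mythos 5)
Your proposal is correct and follows essentially the same argument as the paper: induction on $n$ with the base case given by Lemma~\ref{lem:para_vect_error}, the identical add/subtract decomposition through $\cP_{x_{n-1}}^{x_n}T_{n-1}v$, the isometry of parallel transport to preserve the inductive bound on the first term, and Lemma~\ref{lem:para_vect_error} together with the iterated bound $\|T_{n-1}v\|\leq M_\cU^{n-2}\|v\|$ from Assumption~\ref{asmp:unibound_vt} for the second. No gaps.
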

\begin{coro}\label{coro:multi_para}
Suppose \cref{asmp:basic} holds and $x_1,x_2,\ldots,x_n\in\cM$, $\dist(x_1,x_i), \dist(x_{i-1},x_i) < 2\tilr, 2\leq i \leq n$, $n> 2$. If $\xi\in\TM{x_1}$, then
\[
\left\| \cP_{x_{1}}^{x_{n}}\ \xi - \cP_{x_{n-1}}^{x_{n}}\cP_{x_{n-2}}^{x_{n-1}}\cdots\cP_{x_{1}}^{x_{2}}\ \xi \right\| \leq \rho_1\|\xi\|\sum_{i=2}^{n-1}\dist(x_i,x_{i+1})\dist(x_1,x_i).
\]
\end{coro}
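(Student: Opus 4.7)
The plan is to prove the result by induction on $n$, using \cref{lem:2parallel} as the key tool together with the isometry property of parallel transport.

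For the base case $n=3$, the statement reduces to
\[
\bigl\| \cP_{x_1}^{x_3}\xi - \cP_{x_2}^{x_3}\cP_{x_1}^{x_2}\xi \bigr\| \le \rho_1 \dist(x_2,x_3)\dist(x_1,x_2)\|\xi\|,
\]
which is exactly \cref{lem:2parallel} applied to the triple $(x_1,x_2,x_3)$. The three required pairwise distances lie below $2\tilr$ by the hypothesis of the corollary.

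For the inductive step, suppose the estimate holds for $n-1$. I would insert the ``partial'' transport $\cP_{x_{n-1}}^{x_n}\cP_{x_1}^{x_{n-1}}\xi$ and split via the triangle inequality,
\[
\begin{aligned}
\bigl\| \cP_{x_1}^{x_n}\xi - \cP_{x_{n-1}}^{x_n}\cdots\cP_{x_1}^{x_2}\xi \bigr\|
&\le \bigl\| \cP_{x_1}^{x_n}\xi - \cP_{x_{n-1}}^{x_n}\cP_{x_1}^{x_{n-1}}\xi \bigr\| \\
&\quad + \bigl\| \cP_{x_{n-1}}^{x_n}\bigl(\cP_{x_1}^{x_{n-1}}\xi - \cP_{x_{n-2}}^{x_{n-1}}\cdots\cP_{x_1}^{x_2}\xi\bigr) \bigr\|.
\end{aligned}
\]
The first term is bounded by $\rho_1 \dist(x_{n-1},x_n)\dist(x_1,x_{n-1})\|\xi\|$ by another application of \cref{lem:2parallel} to the triple $(x_1,x_{n-1},x_n)$, whose three pairwise distances are below $2\tilr$ by hypothesis. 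For the second term, since $\cP_{x_{n-1}}^{x_n}$ is a linear isometry its norm drops out, and the induction hypothesis applied to $x_1,\ldots,x_{n-1}$ yields the bound $\rho_1\|\xi\|\sum_{i=2}^{n-2}\dist(x_i,x_{i+1})\dist(x_1,x_i)$. Adding the two contributions gives the claimed sum $\sum_{i=2}^{n-1}\dist(x_i,x_{i+1})\dist(x_1,x_i)$ with the single identified constant $\rho_1$ coming from \cref{lem:2parallel}.

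The only potential obstacle is checking that the hypotheses of \cref{lem:2parallel} are satisfied at each invocation, in particular that $\dist(x_1,x_{n-1})<2\tilr$ and $\dist(x_1,x_n)<2\tilr$; both are part of the standing assumption on the points $x_1,\ldots,x_n$. Likewise, the sequence $x_1,\ldots,x_{n-1}$ inherits the hypotheses needed to apply the inductive assumption, so the argument goes through without additional technical work.
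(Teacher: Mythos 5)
Your proof is correct and is essentially identical to the paper's: both argue by induction with base case $n=3$ given by \cref{lem:2parallel}, insert the same intermediate term $\cP_{x_{n-1}}^{x_n}\cP_{x_1}^{x_{n-1}}\xi$, bound the first piece by \cref{lem:2parallel} applied to $(x_1,x_{n-1},x_n)$, and absorb the second piece into the induction hypothesis via the isometry of parallel transport. The hypothesis checks you flag are exactly the ones needed, and they hold as you state.
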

The next lemma is an analogy of \cref{lem:2parallel} for the Jacobian of $F$:
\begin{lem}\label{lem:para_jacobian}
    If \cref{asmp:jacobian} holds, then for all $x,y,z \in\cU$ and $\dist(x,y),\dist(y,z), \dist(x,z) < 2\tilr$, we have 
    \[
    \norm{\cP_{y}^x\cP_{z}^y H(z)\cP_{y}^z\cP_{x}^y - \cP_{z}^x H(z) \cP_{x}^z} \leq 2\rho_1\dist(x,y)\dist(y,z)\norm{H(z)}.
    \]
\end{lem}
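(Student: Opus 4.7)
The plan is to reduce the estimate to a direct application of Lemma~\ref{lem:2parallel}, using the fact that parallel transport is an isometry. Introduce the shorthand $A := \cP_{y}^x\cP_{z}^y : \TM{z}\to\TM{x}$ and $B := \cP_{z}^x : \TM{z}\to\TM{x}$. Since each parallel transport is a linear isometry, $A^{-1} = \cP_{y}^z\cP_{x}^y$ and $B^{-1} = \cP_{x}^z$, and the quantity to be bounded is exactly the operator norm of $A H(z) A^{-1} - B H(z) B^{-1}$ acting $\TM{x}\to\TM{x}$.

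The key step is the standard telescoping identity
\[
A H(z) A^{-1} - B H(z) B^{-1} = (A-B)\,H(z)\,A^{-1} + B\,H(z)\,(A^{-1}-B^{-1}),
\]
from which the triangle inequality, together with $\|A^{-1}\|=\|B\|=1$ (isometries), yields
\[
\bigl\|A H(z) A^{-1} - B H(z) B^{-1}\bigr\| \leq \|A-B\|\,\|H(z)\| + \|H(z)\|\,\|A^{-1}-B^{-1}\|.
\]
Next I would apply Lemma~\ref{lem:2parallel} twice. The first application, with the substitution $x\leftrightarrow z$ in that lemma, gives $\|A\xi - B\xi\| = \|\cP_{y}^x\cP_{z}^y\xi - \cP_{z}^x\xi\| \leq \rho_1 \dist(x,y)\dist(y,z)\|\xi\|$ for any $\xi\in\TM{z}$, so $\|A-B\|\leq \rho_1\dist(x,y)\dist(y,z)$. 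The second application, using the lemma as stated (with its $x$ playing the role of our $x$, and sending everything to $\TM{z}$), gives $\|A^{-1}\xi_x - B^{-1}\xi_x\| = \|\cP_{y}^z\cP_{x}^y\xi_x - \cP_{x}^z\xi_x\| \leq \rho_1\dist(x,y)\dist(y,z)\|\xi_x\|$, so $\|A^{-1}-B^{-1}\|\leq \rho_1\dist(x,y)\dist(y,z)$. Summing the two contributions yields the claimed factor of $2$.

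There is essentially no serious obstacle here; the argument is three lines once one writes down the correct telescoping. The only thing to be slightly careful about is bookkeeping of tangent spaces (which parallel transport lives between which fibers), and the fact that the pairwise distances $\dist(x,y),\dist(y,z),\dist(x,z)<2\tilr$ required by Lemma~\ref{lem:2parallel} are precisely the hypotheses imposed in our lemma. No additional constants from Assumption~\ref{asmp:unibound_vt} enter, because all transports appearing here are \emph{parallel} transports along minimizing geodesics rather than the retraction-based vector transport $\cT$.
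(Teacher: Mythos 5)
Your proposal is correct and follows essentially the same route as the paper: the identical telescoping decomposition $\cP_{y}^x\cP_{z}^y H(z)\cP_{y}^z\cP_{x}^y - \cP_{z}^x H(z)\cP_{y}^z\cP_{x}^y$ plus $\cP_{z}^x H(z)\cP_{y}^z\cP_{x}^y - \cP_{z}^x H(z)\cP_{x}^z$, with \cref{lem:2parallel} applied once in each tangent-space direction and the isometry of parallel transport absorbing the remaining factors. The only cosmetic difference is that the paper works pointwise on a vector $v_x$ rather than directly with operator norms.
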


Given arbitrary $x_1,\ldots,x_k\in\cU (k\leq m)$ satisfying $\|\cR^{-1}_{x_{k-j}}(x_{k-j+1})\| < \tilr$ , define $\dx{k-j}{k} =\cT_{x_{k-1}}^{x_k}\cdots\cT_{x_{k-j}}^{x_{k-j+1}}\cR^{-1}_{x_{k-j}}(x_{k-j+1})$. The following lemma provides an upper bound of the norm of the iterative direction $\dx{k}{k}$ in terms of  $\|r_{k-j}\|, j = 0,\ldots,k$.  
\begin{lem}\label{lem:err_estimate} 
Suppose \cref{asmp:basic}, \cref{asmp:retr}, \cref{asmp:unibound_vt}, \cref{asmp:F_lip} and \cref{asmp:bounded_least_square} hold. If the points $x_1,\ldots,x_k\in\cB_{\cM}(x^*,2\tilr) (k\leq m)$ satisfy $\dist(x_{k-j},x_{k-j+1}) < 2\tilr$ and $\|\cR^{-1}_{x_{k-j}}(x_{k-j+1})\| < \tilr, 1 \leq j \leq k$, there exists a constant $M_1>0$, irrelevant to the choice of $x_1,\ldots,x_k$, such that
\begin{equation}
    \|X_k\Gamma_k\|+\|r_k\| \leq M_1\sum_{i=0}^{k}\|r_{k-i}\| = \sum_{i=0}^{k}\cO(\|r_{k-i}\|),
\end{equation}
and
\begin{equation}
     \sum_{j=1}^i|\gamma_j^k|\|\dx{k-j}{k}\|, \sum_{j=1}^i\|\dx{k-j}{k}\|\leq M_1\sum_{j=0}^i\|r_{k-i}\| = \sum_{j=0}^i\cO(\|r_{k-i}\|), \forall 1\leq i\leq k.
\end{equation}
\end{lem}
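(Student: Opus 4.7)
The plan is to chain together three elementary bounds: (i) the vector-transport magnitude bound from \cref{asmp:unibound_vt}, (ii) the retraction-to-distance bound from \cref{asmp:retr}, and (iii) the coercive lower bound of $F$ from \cref{asmp:F_lip}, and finally invoke \cref{asmp:bounded_least_square} for the extrapolation coefficients. Concretely, I will first reduce every $\|\dx{k-j}{k}\|$ to a quantity involving only the residuals $\|r_{k-j}\|$ and $\|r_{k-j+1}\|$, and then sum and insert the bound $\|\Gamma_k\|_\infty\le M_\Gamma$.

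First I would fix $1\le j\le k$ and estimate $\|\dx{k-j}{k}\|$. Because $\dx{k-j}{k}$ is the composition of $j$ vector transports applied to $\cR^{-1}_{x_{k-j}}(x_{k-j+1})$, iterating \cref{asmp:unibound_vt} yields $\|\dx{k-j}{k}\|\le M_\cU^{\,j}\,\|\cR^{-1}_{x_{k-j}}(x_{k-j+1})\|$. Since $\|\cR^{-1}_{x_{k-j}}(x_{k-j+1})\|<\tilr$ by hypothesis, \cref{asmp:retr} gives $\|\cR^{-1}_{x_{k-j}}(x_{k-j+1})\|\le \dist(x_{k-j},x_{k-j+1})/\Tilde{C}$. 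Finally, because both $x_{k-j}$ and $x_{k-j+1}$ lie in $\cB_{\cM}(x^*,2\tilr)$ and satisfy $\dist(x_{k-j},x_{k-j+1})<2\tilr$, \cref{asmp:F_lip} together with the isometry of parallel transport gives
\[
L_1\,\dist(x_{k-j},x_{k-j+1})\le \bigl\|F(x_{k-j})-\cP_{x_{k-j+1}}^{x_{k-j}}F(x_{k-j+1})\bigr\|\le \|r_{k-j}\|+\|r_{k-j+1}\|.
\]
Combining these three steps yields $\|\dx{k-j}{k}\|\le C_1\bigl(\|r_{k-j}\|+\|r_{k-j+1}\|\bigr)$ with $C_1:=\max(1,M_\cU^m)/(\Tilde{C}L_1)$, a constant independent of the particular sequence $x_1,\dots,x_k$ since $j\le k\le m$.

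Next I would assemble the target estimates. Summing the pointwise bound over $j=1,\dots,i$ and using a telescoping re-indexing, $\sum_{j=1}^i\|\dx{k-j}{k}\|\le 2C_1\sum_{j=0}^{i}\|r_{k-j}\|$; multiplying by $|\gamma_j^k|\le M_\Gamma$ (\cref{asmp:bounded_least_square}) gives the same sort of bound with the extra factor $M_\Gamma$. For the first inequality I write $X_k\Gamma_k=\sum_{j=1}^{m_k}\gamma_j^k\dx{k-j}{k}$, apply the triangle inequality together with $|\gamma_j^k|\le M_\Gamma$, and add the free term $\|r_k\|$, obtaining $\|X_k\Gamma_k\|+\|r_k\|\le (2C_1 M_\Gamma+1)\sum_{i=0}^{k}\|r_{k-i}\|$. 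Setting $M_1=\max(2C_1M_\Gamma+1,\,2C_1M_\Gamma)$ gives the common constant claimed in the statement.

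I do not expect a genuine obstacle here; the proof is essentially bookkeeping. The only point that needs attention is that the uniform constant $M_1$ must be independent of the sequence $x_1,\dots,x_k$, which is why I would push the $M_\cU^j$ factor up to $M_\cU^m$ using $j\le k\le m$, and why it matters that \cref{asmp:F_lip}, \cref{asmp:retr}, and \cref{asmp:unibound_vt} all supply \emph{uniform} constants on the compact set $\cU\supset\cB_{\cM}(x^*,2\tilr)$. The one subtlety worth stating explicitly is that parallel transport is an isometry, which is what lets us upper bound $\|F(x_{k-j})-\cP_{x_{k-j+1}}^{x_{k-j}}F(x_{k-j+1})\|$ by $\|r_{k-j}\|+\|r_{k-j+1}\|$ in step three above.
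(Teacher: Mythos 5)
Your proposal is correct and follows essentially the same route as the paper's proof: the same three-step chain (vector-transport bound from \cref{asmp:unibound_vt}, retraction lower bound from \cref{asmp:retr}, coercivity of $F$ from \cref{asmp:F_lip}) to get $\|\dx{k-j}{k}\|\le C_1(\|r_{k-j}\|+\|r_{k-j+1}\|)$, followed by summation and \cref{asmp:bounded_least_square}. The only differences are cosmetic choices in how the final constant $M_1$ is assembled.
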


Define $r^\prime = \min\{\frac{\tilr}{2(mM_1+1)L_2},\frac{\tilr}{4}\}$. Next corollary ensures that all the auxiliary points based on $x_k$ in the proof of following propositions and theorems fall into a small neighborhood of $x_k$.
\begin{coro}\label{coro:dist_control}
Suppose \cref{asmp:basic}, \cref{asmp:retr}, \cref{asmp:unibound_vt}, \cref{asmp:F_lip} and \cref{asmp:bounded_least_square} hold. Assume the points $x_1,\ldots,x_k\in\cB_{\cM}(x^*,r^\prime) (k\leq m)$ satisfy $\|\cR^{-1}_{x_{j}}(x_{j+1})\| < \tilr, 1 \leq j \leq k-1$. Fix any integer $1\leq i\leq k$ and real number $0\leq \alpha,\beta \leq 1$. Choose $v\in\TM{x_k}$ such that $\|v\| \leq \|r_k\|$. Define $\eta := -\alpha\sum_{j=1}^i \gamma_j^k \dx{k-j}{k} +\beta v$ and $\xi := -\alpha\sum_{j=1}^i \dx{k-j}{k} +\beta v$. Then we have $\|\eta\|, \|\xi\| < \frac{\tilr}{2}$. In particular, if we define $y = \Exp_{x_k}(\eta)$ and $z = \Exp_{x_k}(\xi)$, then $x_1,\ldots,x_{k-1},y,z\in\cB_{\cM}(x_k,\frac{\tilr}{2})$.
\end{coro}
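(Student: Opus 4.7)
The estimate $\|\eta\|,\|\xi\| < \tilr/2$ reduces to combining a pointwise bound on each residual $\|r_{k-j}\|$ with the bound on $\sum_j \|\dx{k-j}{k}\|$ already supplied by Lemma~\ref{lem:err_estimate}. First I would use the hypothesis $x_{k-j}\in\cB_\cM(x^*,r')$ (with $r' \le \tilr/4 < 2\tilr$) together with $F(x^*) = 0$ to invoke Assumption~\ref{asmp:F_lip}: writing $r_{k-j} = -F(x_{k-j}) = -\bigl(F(x_{k-j})-\cP_{x^*}^{x_{k-j}}F(x^*)\bigr)$ gives
\[
\|r_{k-j}\| \le L_2 \dist(x_{k-j}, x^*) < L_2 r', \qquad 0 \le j \le k.
\]
Note that the hypotheses of Lemma~\ref{lem:err_estimate} itself (in particular $\dist(x_{k-j},x_{k-j+1})<2\tilr$) hold automatically, since $x_{k-j},x_{k-j+1}\in\cB_\cM(x^*,\tilr/4)$ forces $\dist(x_{k-j},x_{k-j+1})<\tilr/2$.

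Next, applying the triangle inequality in $\TM{x_k}$ together with $\alpha,\beta\in[0,1]$ and $\|v\|\le\|r_k\|$, I obtain
\[
\|\eta\| \le \sum_{j=1}^i |\gamma_j^k|\,\|\dx{k-j}{k}\| + \|r_k\|, \qquad \|\xi\| \le \sum_{j=1}^i \|\dx{k-j}{k}\| + \|r_k\|.
\]
Lemma~\ref{lem:err_estimate} bounds both right-hand sums by $M_1\sum_{j=0}^i\|r_{k-j}\|$. Substituting the residual estimate from Step~1 and using $i\le k\le m$ gives $\|\eta\|,\|\xi\| \le (mM_1+1)\,L_2 r' \le \tilr/2$ by the very definition of $r'$. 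For the "in particular" statement, Lemma~\ref{lemma:1} says $\Exp_{x_k}$ is a (radial) diffeomorphism on $B_{\TM{x_k}}(0,2\tilr)$, so $\dist(x_k,y) = \|\eta\| < \tilr/2$ and similarly $\dist(x_k,z) = \|\xi\| < \tilr/2$; for $1\le j\le k-1$, the triangle inequality gives
\[
\dist(x_j,x_k) \le \dist(x_j,x^*) + \dist(x^*,x_k) < 2r' \le \tilr/2.
\]

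\textbf{Main obstacle.} The substantive geometric content has been front-loaded into Lemma~\ref{lem:err_estimate}, so the present corollary is mostly bookkeeping. The only delicate point is matching constants: one must verify that the additional $\|v\|$ contribution (at most $\|r_k\|\le L_2 r'$) together with the weighted direction sum over $i\le m$ indices fits inside $\tilr/2$, which is exactly the reason the factor $(mM_1+1)L_2$ appears in the definition of $r'$. Once the reductions above are in place, no further manifold-geometry input (beyond the isometric property of $\Exp_{x_k}$ within the $2\tilr$-ball) is required.
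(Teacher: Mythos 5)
Your proposal is correct and follows essentially the same route as the paper's proof: bound each $\|r_{k-j}\|$ by $L_2 r'$ via Assumption~\ref{asmp:F_lip} and $F(x^*)=0$, apply the triangle inequality and Lemma~\ref{lem:err_estimate} to get $\|\eta\|,\|\xi\|\le (mM_1+1)L_2 r' \le \tilr/2$, and finish the membership claims with $\dist(x_k,y)=\|\eta\|$ and $\dist(x_j,x_k)<2r'\le\tilr/2$. The only (harmless) additions are your explicit verification of the hypotheses of Lemma~\ref{lem:err_estimate}, which the paper leaves implicit.
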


In the Euclidean space, $x_k-x_{k-i}$ can be written as the sum of the difference of two consecutive iterations, i.e., $x_k-x_{k-i} = \sum_{j=0}^{i-1} (x_{k-j}-x_{k-j-1})$. In contrast, multiple transportations bring deviations in the Riemannian manifold, which are bounded in the next proposition.

\begin{prop}\label{prop:multi_step_err}
Suppose \cref{asmp:basic}, \cref{asmp:retr}, \cref{asmp:unibound_vt}, \cref{asmp:F_lip} and \cref{asmp:bounded_least_square} hold. If the points $x_1,\ldots,x_k\in\cB_{\cM}(x^*,r^\prime) (k\leq m)$ satisfy $\|\cR^{-1}_{x_{k-j}}(x_{k-j+1})\| < \tilr$ for $1\leq j \leq k-1$, there exists a constant $M_2 > 0$ irrelevant to the choice of $x_1,\ldots,x_k$ such that for any $1\leq i\leq k$, we have
\[
\left\| \sum_{j=1}^i\dx{k-j}{k}+\Exp_{x_k}^{-1}x_{k-i} \right\| \leq 
 M_2\sum_{j=0}^i\|r_{k-j}\|^2 = \sum_{j=0}^i\cO(\|r_{k-j}\|^2).
\]
\end{prop}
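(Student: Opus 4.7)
My plan is to proceed by induction on $i$, viewing the identity as the curvature-perturbed form of the Euclidean telescoping $\sum_{j=1}^{i}(x_{k-j+1}-x_{k-j}) = x_k - x_{k-i}$. As preliminaries, \cref{asmp:F_lip} combined with \cref{lem:basic} and \cref{asmp:retr} yield $\dist(x_{k-j},x_{k-j+1}) = O(\|r_{k-j}\|+\|r_{k-j+1}\|)$ and $\|\cR^{-1}_{x_{k-j}}(x_{k-j+1})\|$ of the same order, while \cref{coro:dist_control} confines every auxiliary point to a neighborhood of $x_k$ on which $\Exp^{-1}$ is defined and the earlier lemmas apply.

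For the base case $i=1$, I decompose
\begin{equation*}
\dx{k-1}{k}+\Exp_{x_k}^{-1}x_{k-1} = \bigl(\cT_{x_{k-1}}^{x_k}-\cP_{x_{k-1}}^{x_k}\bigr)\cR_{x_{k-1}}^{-1}(x_k) + \cP_{x_{k-1}}^{x_k}\bigl(\cR_{x_{k-1}}^{-1}(x_k)-\Exp_{x_{k-1}}^{-1}(x_k)\bigr),
\end{equation*}
using the geodesic-reversal identity $\Exp_{x_k}^{-1}(x_{k-1}) = -\cP_{x_{k-1}}^{x_k}\Exp_{x_{k-1}}^{-1}(x_k)$. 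The first summand is bounded by \cref{lem:para_vect_error}; the second, which compares $\cR^{-1}$ and $\Exp^{-1}$ at the same point, is handled by \cref{lem:basic} combined with \cref{lem:coslaw}. Each factor is $O(\sum\|r\|)$, so the total is $O(\|r_{k-1}\|^2+\|r_k\|^2)$.

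For the induction step I split
\begin{equation*}
\sum_{j=1}^{i+1}\dx{k-j}{k}+\Exp_{x_k}^{-1}x_{k-i-1} = \Bigl(\sum_{j=1}^{i}\dx{k-j}{k}+\Exp_{x_k}^{-1}x_{k-i}\Bigr) + \bigl(\dx{k-i-1}{k}+\Exp_{x_k}^{-1}x_{k-i-1}-\Exp_{x_k}^{-1}x_{k-i}\bigr),
\end{equation*}
invoking the inductive hypothesis on the first bracket. To control the three-term increment I apply \cref{lem:triangle_rule} at $x=x_k$ with $\eta=\Exp_{x_k}^{-1}x_{k-i}$ (so $y=x_{k-i}$) and $\xi=-\dx{k-i-1}{k}$, which yields
\begin{equation*}
\dist\bigl(\Exp_{x_k}(\eta+\xi),\Exp_{x_{k-i}}(\cP_{x_k}^{x_{k-i}}\xi)\bigr)\leq c_0\min(\|\xi\|,\|\eta\|)(\|\xi\|+\|\eta\|)^2,
\end{equation*}
a cubic quantity in $\sum_j\|r_{k-j}\|$ and therefore absorbed into the quadratic target once the residuals are small. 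It then remains to show $\Exp_{x_{k-i}}(\cP_{x_k}^{x_{k-i}}\xi)\approx x_{k-i-1}$, equivalently $-\cP_{x_k}^{x_{k-i}}\dx{k-i-1}{k}\approx\Exp_{x_{k-i}}^{-1}x_{k-i-1}$. I obtain this through a short approximation chain: \cref{coro:multi_para_vect_err} replaces each $\cT$ in the definition of $\dx{k-i-1}{k}$ by the corresponding $\cP$; \cref{coro:multi_para} then collapses the resulting chain of parallel transports into $\cP_{x_{k-i-1}}^{x_k}$; \cref{lem:basic} together with \cref{lem:coslaw} swaps $\cR^{-1}_{x_{k-i-1}}(x_{k-i})$ for $\Exp_{x_{k-i-1}}^{-1}(x_{k-i})$; \cref{lem:2parallel} simplifies $\cP_{x_k}^{x_{k-i}}\cP_{x_{k-i-1}}^{x_k}$ to $\cP_{x_{k-i-1}}^{x_{k-i}}$; and the geodesic-reversal identity finally produces $\Exp_{x_{k-i}}^{-1}(x_{k-i-1})$. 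A concluding application of \cref{lem:coslaw} converts the manifold distance bound back into a tangent-space norm bound.

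The main obstacle is the bookkeeping: every link in the chain contributes a correction of the form $\|u\|\,\|v\|$ with $\|u\|,\|v\|=O(\sum_j\|r_{k-j}\|)$, so each correction is naturally quadratic in residuals, but I must verify that the cross terms $\|r_a\|\,\|r_b\|$ are absorbed into $\sum_{j=0}^{i+1}\|r_{k-j}\|^2$ via AM--GM, and that the accumulated multiplicative constants (depending on $M_\cU$, $\rho$, $\rho_1$, $c_0$, $K$ and the fixed memory length $m$) can be collected into a single $M_2$ valid uniformly for $1\leq i\leq k\leq m$.
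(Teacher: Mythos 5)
Your proposal is correct; all the steps you outline can be carried through with the lemmas you cite, and every correction term is indeed quadratic in the residuals (or cubic, hence absorbable after bounding one factor by $\tilr$). The base case $i=1$ is exactly the paper's estimate: the same splitting into a $\cT$-versus-$\cP$ error (\cref{lem:para_vect_error}) and an $\cR^{-1}$-versus-$\Exp^{-1}$ error (\cref{lem:basic} plus \cref{lem:coslaw}), via the same geodesic-reversal identity. Where you genuinely diverge is in the telescoping direction. The paper keeps the full sum intact and recurses on the \emph{anchor point}: it peels off the most recent displacement $\dx{k-1}{k}$, pays a parallelogram-defect term $\|\Exp_{x_k}^{-1}x_{k-i}-\Exp^{-1}_{x_k}x_{k-1}-\cP_{x_{k-1}}^{x_{k}}\Exp_{x_{k-1}}^{-1}x_{k-i}\|$ (controlled by \cref{lem:triangle_rule} and \cref{lem:coslaw}) plus a single-transport error, and reduces to the same statement anchored at $x_{k-1}$ with one fewer term. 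Consequently it only ever compares $\cR^{-1}$ with $\Exp^{-1}$ across \emph{one} step and only ever strips \emph{one} vector transport at a time, so it never needs \cref{coro:multi_para} or \cref{lem:2parallel} in this proof. You instead fix the anchor at $x_k$, induct on $i$, and peel off the \emph{oldest} displacement $\dx{k-i-1}{k}$, which arrives carrying $i+1$ nested transports; this is why your increment bound must invoke the multi-transport machinery (\cref{coro:multi_para_vect_err}, \cref{coro:multi_para}, \cref{lem:2parallel}) at every stage. Both routes accumulate $O(m)$ error terms of size $O(\sum_j\|r_{k-j}\|^2)$ and yield a uniform $M_2$; the paper's recursion keeps each individual estimate lighter, while yours keeps the inductive statement itself simpler. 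The bookkeeping concerns you flag at the end (cross terms via AM--GM, constants depending on $m$, $M_\cU$, $\rho$, $\rho_1$, $c_0$, $K$) are handled in the paper exactly as you anticipate, so there is no gap.
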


Suppose $\Gamma_k = (\gamma_1^k,\ldots,\gamma_{k}^k)$ and set $\gamma_0^k = 0$. Let $z_k^i := \sum_{j=1}^i\dx{k-j}{k}$ and $y_k^i:= \Exp_{x_k}\left( -z_k^i \right)$ for $1\leq i\leq k$,
the following proposition estimates the distance between $F(y_k^i)$ and $F(x_{k-i})$ after multiple parallel transportations. 

\begin{prop}\label{prop:2}
Suppose \cref{asmp:basic}, \cref{asmp:retr}, \cref{asmp:unibound_vt}, \cref{asmp:F_lip} and \cref{asmp:bounded_least_square} hold. If the points $x_1,\ldots,x_k\in\cB_{\cM}(x^*,r^\prime) (k\leq m)$ satisfy $\|\cR^{-1}_{x_{k-j}}(x_{k-j+1})\| < \tilr$ , $1\leq j \leq k-1$, there exists a constant $M_3 > 0$ irrelevant to the choice of $x_1,\ldots,x_k$ such that for any $1\leq i \leq k$ we have
\[
\left\| \cP_{y_k^i}^{x_k}F(y_k^i)-\cT_{x_{k-1}}^{ x_k}\cT_{x_{k-2}}^{x_{k-1}}\cdots\cT_{x_{k-i}}^{x_{k-i+1}}F(x_{k-i}) \right\| \leq M_3\sum_{j=0}^i\|r_{k-j}\|^2 = \sum_{j=0}^i\cO(\|r_{k-j}\|^2).
\]
\end{prop}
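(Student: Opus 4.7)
The plan is to chain two intermediate quantities between the two vectors being compared and apply the triangle inequality. Define
\[
A := \cP_{x_{k-i}}^{x_k} F(x_{k-i}), \qquad B := \cP_{x_{k-1}}^{x_k}\cP_{x_{k-2}}^{x_{k-1}}\cdots \cP_{x_{k-i}}^{x_{k-i+1}} F(x_{k-i}),
\]
and bound the target norm by $I + II + III$, where
\[
I := \|\cP_{y_k^i}^{x_k}F(y_k^i) - A\|, \quad II := \|A - B\|, \quad III := \|B - \cT_{x_{k-1}}^{x_k}\cdots \cT_{x_{k-i}}^{x_{k-i+1}} F(x_{k-i})\|.
\]
Terms $II$ and $III$ are essentially ready-made from the earlier corollaries. \cref{coro:multi_para} bounds $II$ by $\rho_1\|F(x_{k-i})\|$ times a sum of products of pairwise distances among consecutive iterates, and \cref{coro:multi_para_vect_err} bounds $III$ by $\rho\|F(x_{k-i})\|\sum_{l=1}^{i} M_\cU^{l-1}\dist(x_{k-l}, x_{k-l+1})$. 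Using $\|F(x_{k-i})\| = \|r_{k-i}\|$ together with \cref{lem:basic} and \cref{lem:err_estimate} to convert each $\dist(x_{k-l}, x_{k-l+1})$ into a quantity of order $\sum_j \|r_{k-j}\|$, both terms fit within $O(\sum_j \|r_{k-j}\|^2)$ after an elementary $ab \leq (a^2+b^2)/2$ step.

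The delicate piece is $I$. I would use that $\cP_{y_k^i}^{x_k}$ is an isometry to split $I$ as
\[
I \leq \|F(y_k^i) - \cP_{x_{k-i}}^{y_k^i} F(x_{k-i})\| + \left\|\bigl(\cP_{y_k^i}^{x_k}\cP_{x_{k-i}}^{y_k^i} - \cP_{x_{k-i}}^{x_k}\bigr) F(x_{k-i})\right\|.
\]
The first summand is at most $L_2\, \dist(y_k^i, x_{k-i})$ by \cref{asmp:F_lip}, so the crux becomes proving $\dist(y_k^i, x_{k-i}) = O(\sum_j\|r_{k-j}\|^2)$. For this, I would apply \cref{lem:coslaw} with $\eta_{x_k} = -z_k^i$ and $\xi_{x_k} = \Exp_{x_k}^{-1} x_{k-i}$, so that the corresponding $y$ and $z$ in the lemma are exactly $y_k^i$ and $x_{k-i}$. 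The lemma yields $\dist(y_k^i, x_{k-i})^2 \leq \|z_k^i + \Exp_{x_k}^{-1}x_{k-i}\|^2 + K\|\xi_{x_k}\|^2\|\eta_{x_k}\|^2$; the first summand is of order $(\sum_j\|r_{k-j}\|^2)^2$ thanks to \cref{prop:multi_step_err}, while the correction is of higher (quartic) order because both tangent vectors have norm $O(\sum_j\|r_{k-j}\|)$ by \cref{lem:err_estimate}. Taking square roots delivers the desired $O(\sum_j\|r_{k-j}\|^2)$ bound. The second summand of $I$ is then handled by \cref{lem:2parallel}, which supplies $\rho_1\,\dist(y_k^i, x_k)\,\dist(x_{k-i}, y_k^i)\,\|F(x_{k-i})\|$; since $\dist(y_k^i, x_k) = \|z_k^i\| = O(\sum_j\|r_{k-j}\|)$, $\dist(x_{k-i}, y_k^i) = O(\sum_j\|r_{k-j}\|^2)$, and $\|F(x_{k-i})\| = \|r_{k-i}\|$, this product is comfortably absorbed.

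The main obstacle, rather than any single estimate, is verifying that the geometric hypotheses of the cited lemmas (in particular the $2\tilr$-distance bounds in \cref{lem:2parallel}, \cref{lem:coslaw}, and \cref{asmp:F_lip}) are available for every pair of intermediate points that appears, including the auxiliary point $y_k^i$. This is delivered by \cref{coro:dist_control}, applied with $\alpha = 1$, $\beta = 0$, and $v = 0$: it certifies that $y_k^i = \Exp_{x_k}(-z_k^i)$ lies in $\cB_\cM(x_k, \tilr/2)$ alongside $x_1,\ldots,x_{k-1}$, so by the triangle inequality every pair of points used in the argument lies within $\tilr < 2\tilr$ of each other. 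Once this containment is in place, collecting the three bounds yields a constant $M_3$, independent of the choice of $x_1,\ldots,x_k$, such that the claimed inequality holds.
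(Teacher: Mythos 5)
Your proposal is correct and follows essentially the same route as the paper's proof: the identical three-term decomposition through $\cP_{x_{k-i}}^{x_k}F(x_{k-i})$ and the chained parallel transports, with \cref{coro:multi_para} and \cref{coro:multi_para_vect_err} for the last two terms, and the same split of the first term into a Lipschitz piece (controlled via \cref{prop:multi_step_err} and \cref{lem:coslaw}) plus a holonomy piece controlled by \cref{lem:2parallel}, all under the containment supplied by \cref{coro:dist_control}. The only differences are cosmetic (you keep both distance factors from \cref{lem:2parallel} and work with the squared form of \cref{lem:coslaw} before taking square roots), neither of which changes the argument.
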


Let $w_k^i := \sum_{j=0}^{i}\gamma_j^k\dx{k-j}{k}$ and $v_k^i = \Exp_{x_k}(-w_k^i)$. Next proposition is necessary for the proof of \cref{thm:RAM_local_conv}.
\begin{prop}\label{prop:3}
    Suppose \cref{asmp:F_lip}, \cref{asmp:jacobian} and \cref{asmp:bounded_least_square} hold. If the points $x_1,\ldots,x_k\in\cB_{\cM}(x^*,r^\prime)\subset\cU (k\leq m)$ satisfy $\|\cR^{-1}_{x_{k-j}}(x_{k-j+1})\| < \tilr$, then there exists a constant $M_4 > 0$ irrelevant to the choice of $x_1,\ldots,x_k$ such that for all $1\leq i \leq k$ we have
    \[
        \norm{\cP_{v_k^i}^{x_k} F(v_k^i)-\cP_{v_k^{i-1}}^{x_k} F(v_k^{i-1})-\gamma_i^k (\cP_{y_k^i}^{x_k} F(y_k^i)-\cP_{y_k^{i-1}}^{x_k} F(y_k^{i-1}))} \leq M_4\sum_{j=0}^i\|r_{k-j}\|^2.
    \]
\end{prop}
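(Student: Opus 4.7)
The plan is to expand the vector field $F$ to first order along geodesics emanating from $x_k$, using \cref{asmp:jacobian} to control the quadratic remainder. The first-order terms will cancel exactly because $w_k^i-w_k^{i-1}=\gamma_i^k\,\dx{k-i}{k}=\gamma_i^k(z_k^i-z_k^{i-1})$, leaving only a weighted sum of quadratic remainders which can be bounded by $\sum_{j=0}^{i}\cO(\|r_{k-j}\|^2)$ via \cref{lem:err_estimate} and \cref{asmp:bounded_least_square}.

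Concretely, I would first establish the following manifold Taylor identity: for any $x\in\cU$ and $v\in\TM{x}$ with $\|v\|<\tilr$, setting $\gamma(t)=\Exp_x(tv)$, one has
\begin{equation}
    \cP_{\Exp_x(v)}^{x}F(\Exp_x(v)) \;=\; F(x)+H(x)[v]+R(x,v),\qquad \|R(x,v)\|\le \tfrac{L}{2}\|v\|^2.
\end{equation}
This follows by writing $\cP_{\gamma(1)}^{x}F(\gamma(1))-F(x)=\int_0^1 \cP_{\gamma(t)}^{x} H(\gamma(t))[\dot\gamma(t)]\,dt$ (using that parallel transport intertwines the covariant derivative along a smooth curve), noting that $\dot\gamma(t)=\cP_x^{\gamma(t)}v$ because $\gamma$ is a geodesic, and invoking \cref{asmp:jacobian} to bound $\|\cP_{\gamma(t)}^{x}H(\gamma(t))\cP_x^{\gamma(t)}-H(x)\|\le L\,\dist(x,\gamma(t))=Lt\|v\|$ before integrating.

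Next I would apply this identity with $x=x_k$ and $v\in\{-w_k^i,-w_k^{i-1},-z_k^i,-z_k^{i-1}\}$; by \cref{coro:dist_control} each of these tangent vectors has norm below $\tilr/2$, and the four points $v_k^i,v_k^{i-1},y_k^i,y_k^{i-1}$ all lie in $\cB_\cM(x_k,\tilr/2)\subset\cU$, so the expansion is valid. Linearity of $H(x_k)$ makes the $H$-terms cancel, so the left-hand side of the proposition equals $R(x_k,-w_k^i)-R(x_k,-w_k^{i-1})-\gamma_i^k[R(x_k,-z_k^i)-R(x_k,-z_k^{i-1})]$. Combining the quadratic remainder bound with $|\gamma_i^k|\le M_\Gamma$ from \cref{asmp:bounded_least_square}, the estimates $\|w_k^i\|,\|z_k^i\|\le M_1\sum_{j=0}^{i}\|r_{k-j}\|$ from \cref{lem:err_estimate}, and Cauchy--Schwarz $\bigl(\sum_{j=0}^{i}\|r_{k-j}\|\bigr)^2\le(m+1)\sum_{j=0}^{i}\|r_{k-j}\|^2$ (since $i\le m$), I obtain the claim with, for instance, $M_4:=L(1+M_\Gamma)M_1^2(m+1)$.

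The main obstacle is the manifold Taylor expansion itself, since in contrast to the Euclidean case one must translate ``$F$ at a nearby point'' into the parallel-transported quantity $\cP_{\Exp_x(v)}^{x}F(\Exp_x(v))$ and interpret the derivative via the Jacobian $H$ acting on parallel-translated velocities. Once that expansion is in place, the rest is algebraic cancellation and norm bookkeeping, with the small-neighborhood preconditions handled uniformly by \cref{coro:dist_control}.
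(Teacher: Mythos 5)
Your proposal is correct, but it follows a genuinely different and noticeably shorter route than the paper's proof. The paper works with the secant-type differences directly: it introduces auxiliary points $\Tilde{z}_1=\Exp_{v_k^{i-1}}(\gamma_i^k\,\cP_{x_k}^{v_k^{i-1}}(-\dx{k-i}{k}))$ and $\Tilde{z}_2$ analogously, uses \cref{lem:triangle_rule} to show these are within $\cO(\sum_j\|r_{k-j}\|^2)$ of $v_k^i$ and $y_k^i$, and then represents $\cP_{\Tilde z_1}^{v_k^{i-1}}F(\Tilde z_1)-F(v_k^{i-1})$ and its counterpart as integrals of the Jacobian along two geodesics emanating from the \emph{different} base points $v_k^{i-1}$ and $y_k^{i-1}$ with (transported) velocity $-\dx{k-i}{k}$; the cancellation then happens between the two integrands, and comparing them requires \cref{lem:para_jacobian}, \cref{lem:2parallel} and \cref{asmp:jacobian}. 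You instead Taylor-expand all four terms to second order at the single common base point $x_k$, so that the zeroth- and first-order terms cancel exactly via the linear identity $w_k^i-w_k^{i-1}=\gamma_i^k(z_k^i-z_k^{i-1})$, and only the four quadratic remainders $R(x_k,\cdot)$ survive. Your derivation of the remainder bound $\|R(x,v)\|\le\tfrac{L}{2}\|v\|^2$ is sound (it is the same covariant-integral identity the paper itself invokes, combined with \cref{asmp:jacobian} and the fact that $\dot\gamma(t)=\cP_x^{\gamma(t)}v$ along a geodesic, with the minimizing-geodesic identification of the transports guaranteed inside the injectivity radius by \cref{lemma:1}), and the preconditions are handled by \cref{coro:dist_control} exactly as you say. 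What your approach buys is economy: it dispenses entirely with \cref{lem:triangle_rule}, \cref{lem:para_jacobian} and \cref{lem:2parallel} and yields a cleaner explicit constant; what the paper's approach buys is that it stays closer to the Euclidean Anderson-mixing secant structure and keeps all comparisons at the level of first-order differences, at the cost of considerably more parallel-transport bookkeeping. Both give the claimed $M_4\sum_{j=0}^i\|r_{k-j}\|^2$ bound.
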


\subsection{Locally Linear Convergence of RAM}
Setting 
$$r = \min\left\{ r^\prime, \frac{2\tilr}{1+CM_1L_2m},\frac{2\tilr}{L_2(1+CM_1L_2m)},\frac{\tilr}{2(1+M_1L_2^2m)} \right\},$$
and $x_{k+1} = \mathrm{RAM}(x_1,\ldots,x_k)$ that means the output of the one-step iteration of \cref{alg:RAM} based on $x_1,\cdots,x_k$. 
\begin{thm}\label{thm:RAM_local_conv}
Suppose \cref{asmp:basic}-\cref{asmp:bounded_least_square} hold. Given arbitrary $x_1,\ldots,x_k\in\cB_{\cM}(x^*,r)\subset \cU (k\leq m)$ satisfying $\|\cR^{-1}_{x_{i}}(x_{i+1})\| < \tilr, 1\leq i\leq k-1$ and set $x_{k+1} = \mathrm{RAM}(x_1,\ldots,x_k)$, there exists a constant $\widehat{M}>0$, irrelevant to the choice of $x_1,\ldots,x_k$, such that
\begin{equation}\label{eqn:RAM_conv}
     \begin{aligned}
     \|r_{k+1}\| \leq& \theta_k\left[(1-\beta_k)+\kappa\beta_k\right]\|r_k\|+\sum_{i=0}^{k}\cO(\|r_{k-i}\|^2) 
     \leq \theta_k\left[(1-\beta_k)+\kappa\beta_k\right]\|r_k\|+\widehat{M}\sum_{i=0}^{k}\|r_{k-i}\|^2,
    \end{aligned} 
\end{equation}
where $\theta_k = \frac{\|\Bar{r}_k\|}{\|r_k\|} \leq 1$.
\end{thm}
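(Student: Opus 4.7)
The plan is to establish the Riemannian analogue of the classical one-step AM identity: up to $\cO\bigl(\sum_{j=0}^k\|r_{k-j}\|^2\bigr)$ error, the parallel transport of $r_{k+1}$ back to $\TM{x_k}$ equals $-(I-\beta_k H(x_k))\bar r_k$. The desired contraction factor then emerges by bounding the operator $I-\beta_k H(x_k)$ through the contractivity assumption on $g$.

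The first step is to identify the Riemannian analogue of ``$F(\bar x_k)\approx -\bar r_k$''. Since $w_k^0=0$ gives $v_k^0=x_k$ and $w_k^k=X_k\Gamma_k$ gives $v_k^k=\Exp_{x_k}(-X_k\Gamma_k)$, summing \cref{prop:3} over $i=1,\ldots,k$ telescopes the left-hand side into $\cP_{v_k^k}^{x_k}F(v_k^k)+r_k$. For the right-hand side, \cref{prop:2} applied at indices $i$ and $i-1$, together with $F(x_{k-i})=-r_{k-i}$ and the chain-of-transports defining $\dr{k-i}{k}$, yields $\cP_{y_k^i}^{x_k}F(y_k^i)-\cP_{y_k^{i-1}}^{x_k}F(y_k^{i-1})=\dr{k-i}{k}+\cO(\sum_{j=0}^i\|r_{k-j}\|^2)$. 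Weighting by $\gamma_i^k$ and summing collapses $\sum_i\gamma_i^k\dr{k-i}{k}$ to $R_k\Gamma_k$, producing
\[
\cP_{v_k^k}^{x_k}F(v_k^k)=-\bar r_k+\cO\Bigl(\sum_{j=0}^k\|r_{k-j}\|^2\Bigr).
\]

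Next I relate $F(x_{k+1})$ to $F(v_k^k)$ by a geodesic Taylor expansion. Setting $\tilde x_{k+1}=\Exp_{x_k}(-X_k\Gamma_k+\beta_k\bar r_k)$ and $\hat x_{k+1}=\Exp_{v_k^k}(\beta_k\cP_{x_k}^{v_k^k}\bar r_k)$, \cref{lem:basic} gives $\dist(x_{k+1},\tilde x_{k+1})=\cO(\|\dx{k}{k}\|^2)$ and \cref{lem:triangle_rule} (with $\xi_x=\beta_k\bar r_k$, $\eta_x=-X_k\Gamma_k$) gives $\dist(\tilde x_{k+1},\hat x_{k+1})=\cO((\sum\|r_{k-j}\|)^3)$; both reduce to $\cO(\sum\|r_{k-j}\|^2)$ by \cref{lem:err_estimate}. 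Taylor-expanding $F$ along the geodesic from $v_k^k$ to $\hat x_{k+1}$ via \cref{asmp:jacobian} yields $\cP_{\hat x_{k+1}}^{v_k^k}F(\hat x_{k+1})=F(v_k^k)+\beta_k H(v_k^k)[\cP_{x_k}^{v_k^k}\bar r_k]+\cO(\beta_k^2\|\bar r_k\|^2)$. Parallel-transporting to $x_k$ (using \cref{lem:2parallel} to control the double-transport error) and replacing $\cP_{v_k^k}^{x_k}H(v_k^k)\cP_{x_k}^{v_k^k}$ by $H(x_k)$ via \cref{asmp:jacobian}, at cost $\cO(\dist(x_k,v_k^k)\|\bar r_k\|)=\cO(\sum\|r_{k-j}\|^2)$, I obtain
\[
\cP_{x_{k+1}}^{x_k}F(x_{k+1})=-\bar r_k+\beta_k H(x_k)[\bar r_k]+\cO\Bigl(\sum_{j=0}^k\|r_{k-j}\|^2\Bigr),
\]
so by isometry of $\cP_{x_{k+1}}^{x_k}$, $\|r_{k+1}\|\leq\|I-\beta_k H(x_k)\|_{\mathrm{op}}\|\bar r_k\|+\cO(\sum\|r_{k-j}\|^2)$.

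Finally I bound $\|I-\beta_k H(x_k)\|_{\mathrm{op}}$ via contractivity. At $x^*$, a direct tangent-bundle computation using $F(x^*)=0$ gives $\rmd g|_{x^*}=I-H(x^*)$, and \cref{asmp:contraction} yields $\|I-H(x^*)\|_{\mathrm{op}}\leq\kappa$. \cref{asmp:jacobian}, together with the isometry of parallel transport and the lower bound in \cref{asmp:F_lip}, extends this to $\|I-H(x_k)\|_{\mathrm{op}}\leq\kappa+L\dist(x_k,x^*)\leq\kappa+(L/L_1)\|r_k\|$. Writing $I-\beta_k H(x_k)=(1-\beta_k)I+\beta_k(I-H(x_k))$ for $\beta_k\in(0,1]$ gives $\|I-\beta_k H(x_k)\|\leq(1-\beta_k)+\beta_k\kappa+\cO(\|r_k\|)$; multiplying by $\|\bar r_k\|=\theta_k\|r_k\|$ and absorbing the resulting $\cO(\|r_k\|^2)$ into the error produces the claimed bound. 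The chief obstacle throughout is tracking that every auxiliary point ($y_k^i, v_k^i, \tilde x_{k+1}, \hat x_{k+1}$) remains in the neighborhood where the propositions and the Lipschitz-type hypotheses apply, which is exactly why $r$ is chosen so conservatively and why \cref{coro:dist_control} is invoked repeatedly.
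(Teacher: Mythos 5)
Your proposal is correct in substance, and its first half (telescoping \cref{prop:3} over $i$ and matching each increment against $\gamma_i^k\dr{k-i}{k}$ via \cref{prop:2} to obtain $\cP_{\bar x_k}^{x_k}F(\bar x_k)=-\bar r_k+\cO(\sum_j\|r_{k-j}\|^2)$) is exactly the computation the paper performs for the quantity $\|F(\bar x_k)+\cP_{x_k}^{\bar x_k}\bar r_k\|$. Where you genuinely diverge is in how the factor $(1-\beta_k)+\kappa\beta_k$ is extracted. The paper never linearizes: it writes $\|r_{k+1}\|=\dist(g(x_{k+1}),x_{k+1})$ and splits this by the triangle inequality through $g(x_{k+1}')$, $g(\bar x_k)$ and $\tilde x_{k+1}$, so that the $\kappa\beta_k\theta_k\|r_k\|$ term comes from the metric contraction $\dist(g(x_{k+1}'),g(\bar x_k))\leq\kappa\,\dist(x_{k+1}',\bar x_k)$ and the $(1-\beta_k)\theta_k\|r_k\|$ term comes from the separate distance $\dist(\tilde x_{k+1},x_{k+1}')$; \cref{asmp:contraction} is used only as a Lipschitz bound on $g$, and the Jacobian $H$ enters only inside \cref{prop:3}. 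You instead Taylor-expand $F$ along a geodesic to get $\cP_{x_{k+1}}^{x_k}F(x_{k+1})=-(I-\beta_kH(x_k))\bar r_k+\cO(\cdot)$, identify $\rmd g|_{x^*}=I-H(x^*)$ at the fixed point, deduce $\|I-H(x^*)\|_{\mathrm{op}}\leq\kappa$ from \cref{asmp:contraction}, and propagate this to $x_k$ via \cref{asmp:jacobian} and the lower bound in \cref{asmp:F_lip}. Both routes are valid under the stated hypotheses; yours leans harder on \cref{asmp:jacobian} (needed both for the Taylor remainder and for moving the operator bound from $x^*$ to $x_k$) and requires the extra bookkeeping of transport-composition errors via \cref{lem:2parallel} when comparing $F$ at $x_{k+1}$ and $\hat x_{k+1}$, but it makes the spectral mechanism of the acceleration explicit and would in fact yield the potentially sharper factor $\|I-\beta_kH(x^*)\|_{\mathrm{op}}$ in place of $(1-\beta_k)+\kappa\beta_k$; the paper's metric argument is more economical in that the leading term needs nothing beyond the Lipschitz contraction of $g$. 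Your closing remark about confining all auxiliary points to the ball where the lemmas apply is exactly the role \cref{coro:dist_control} plays in the paper, so no gap there.
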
 

\begin{proof}
Since $x_1,\ldots,x_k\in\cB_{\cM}(x^*,r)$ and $r< r^\prime$, we have $\forall 0\leq i,j \leq k, \dist(x_{i},x_{j})\leq \dist(x_{i},x^*)+\dist(x_{j},x^*) < 2r < \tilr$. Define
\[
\left\{
\begin{aligned}
&\Bar{x}_k:=\Exp_{x_k}(-X_k\Gamma_k), \\
&x_{k+1}^\prime:= \Exp_{x_k}(-X_k\Gamma_k+\beta_k\Bar{r}_k), \\
&\Tilde{x}_{k+1}:= \Exp_{x_k}(-X_k\Gamma_k+\Bar{r}_k).
\end{aligned}\right.
\]

Notice that $\beta_k \leq 1$ and $\|\Bar{r}_k\| \leq \|r_k\|$ by definition. Therefore \cref{coro:dist_control} implies that $x^\prime_{k+1},\Bar{x}_{k},\Tilde{x}_{k+1} \in \cB_{\cM}(x_k,\tilr/2)$. Since by \cref{lem:basic} ,\cref{lem:err_estimate} and \cref{asmp:F_lip} , $\dist(x_{k+1},x^*) \leq \dist(x_{k},x^*)+\dist(x_{k+1},x_k)  \leq (1+CM_1L_2m)r < 2\tilr$. So
$\|r_{k+1}\| \leq L_2\dist(x_{k+1},x^*) \leq  L_2(1+CM_1L_2m)r < 2\tilr$
 , then the equation $\|r_{k+1}\| = \|-F(x_{k+1})\| = \dist\left(\Exp_{x_{k+1}}(-F(x_{k+1})),x_{k+1}\right)$ holds.

We then have:
\begin{equation}
    \begin{aligned}
    \|r_{k+1}\| = \dist(g(x_{k+1}),x_{k+1}) \leq \dist(g(x_{k+1}),g(x_{k+1}^\prime))+\dist(g(x_{k+1}^\prime),g(\Bar{x}_{k}))+\dist(g(\Bar{x}_k),x_{k+1}).
    \end{aligned}
\end{equation}
Since $g$ by \cref{asmp:contraction} is a contraction and $\|\dx{k}{k}\| < \frac{\tilr}{2}$ by \cref{coro:dist_control}, we know from \cref{lem:basic} and \cref{lem:err_estimate} that
\begin{equation}
     \begin{aligned}
      \dist(g(x_{k+1}),g(x_{k+1}^\prime)) \leq& \kappa\dist\left(\cR_{x_k}(\dx{k}{k}),\Exp_{x_k}(\dx{k}{k})\right) 
      \leq \kappa C\|\dx{k}{k}\|^2 \\
      \leq& \kappa C M_1^2(\sum_{i=0}^{k}\|r_{k-i}\|)^2 
      \leq \kappa m CM_1^2\sum_{i=0}^{k}\|r_{k-i}\|^2.
     \end{aligned}
\end{equation}
We also have by \cref{lem:coslaw} and \cref{lem:err_estimate} that
\begin{equation}
    \begin{aligned}
     \dist(g(x_{k+1}^\prime),g(\Bar{x}_{k})) \leq& \kappa\dist\left( \Exp_{x_k}(-X_k\Gamma_k),\Exp_{x_k}(-X_k\Gamma_k+\beta_k\Bar{r}_k) \right) \\
     \leq& \kappa\left[ \beta_k\|\Bar{r}_k\|+\sqrt{K}\|-X_k\Gamma_k\|\|-X_k\Gamma_k+\beta_k\Bar{r}_k\| \right] \\
     \leq& \kappa\beta_k\theta_k\|r_k\|+\kappa\sqrt{K}(\|X_k\Gamma_k\|+\|r_k\|)^2 \\
     \leq& \kappa\beta_k\theta_k\|r_k\| + \kappa m \sqrt{K}M_1^2\sum_{i=0}^{k}\|r_{k-i}\|^2.
    \end{aligned}
\end{equation}
But
\begin{equation}
    \dist(x_{k+1},g(\Bar{x}_k)) \leq \dist(g(\Bar{x}_k),\Tilde{x}_{k+1})+\dist(\Tilde{x}_{k+1},x_{k+1}^\prime)+\dist(x_{k+1}^\prime,x_{k+1}).
\end{equation}
Notice that by \cref{lem:basic} and \cref{lem:err_estimate}
\[
\dist(x_{k+1}^\prime,x_{k+1}) \leq C\|\dx{k}{k}\|^2 \leq m CM_1^2\sum_{i=0}^{k}\|r_{k-i}\|^2,
\]
and by \cref{lem:coslaw} and \cref{lem:err_estimate}
\[
\begin{aligned}
\dist(\Tilde{x}_{k+1},x_{k+1}^\prime) &\leq (1-\beta_k)\theta_k\|r_k\|+\sqrt{K}\|-X_k\Gamma_k+\Bar{r}_k\| \| -X_k\Gamma_k+\beta_k \Bar{r}_k \|  \\
&\leq (1-\beta_k)\theta_k\|r_k\|+m \sqrt{K}M_1^2\sum_{i=0}^{k}\|r_{k-i}\|^2.
\end{aligned}
\]
Now we have:
\begin{equation}
    \begin{aligned}
        \|r_{k+1}\| \leq \theta_k\left[(1-\beta_k)+\kappa\beta_k\right]\|r_k\|+\dist(g(\Bar{x}_k),\Tilde{x}_{k+1})+(\kappa+1)(C+\sqrt{K})mM_1^2\sum_{i=0}^{k}\|r_{k-i}\|^2.
    \end{aligned}   
\end{equation}

Thus it suffices to estimate $\dist(g(\Bar{x}_k),\Tilde{x}_{k+1})$. Note that
\[
\dist(g(\Bar{x}_k),\Tilde{x}_{k+1}) \leq \dist(g(\Bar{x}_k),\Exp_{\Bar{x}_k}(\cP_{x_k}^{\Bar{x}_k}\Bar{r}_k)) + \dist(\Exp_{\Bar{x}_k}(\cP_{x_k}^{\Bar{x}_k}\Bar{r}_k),\Tilde{x}_{k+1}).
\]
\cref{lem:triangle_rule} and  \cref{lem:err_estimate} yield:
\[
\begin{aligned}
    \dist(\Exp_{\Bar{x}_k}(\cP_{x_k}^{\Bar{x}_k}\Bar{r}_k),\Tilde{x}_{k+1}) 
    = &\dist(\Exp_{\Bar{x}_k}(\cP_{x_k}^{\Bar{x}_k}\Bar{r}_k),\Exp_{x_k}(-X_k\Gamma_k+\Bar{r}_k)) \\
    \leq & c_0\min\{ \|r_k\|, \|-X_k\Gamma_k\| \}(\|r_k\|+\|-X_k\Gamma_k\|)^2 \\
    \leq & \frac{c_0 \tilr}{2} m M_1^2\sum_{i=0}^{k}\|r_{k-i}\|^2.
\end{aligned}
\]

Notice that $\|F(\Bar{x}_k)\| \leq \|r_k\|+L_2\dist(x_{k},\Bar{x}_k) \leq (1+M_1L_2^2m)r < \frac{\tilr}{2}$, and it implies $\dist(g(\Bar{x}_k),x_k) < \tilr$. Together with \cref{lem:coslaw}, we obtain
\begin{equation}
    \begin{aligned}
    \dist(g(\Bar{x}_k),\Exp_{\Bar{x}_k}(\cP_{x_k}^{\Bar{x}_k}\Bar{r}_k)) \leq& \left\| -F(\Bar{x}_k)-\cP_{x_k}^{\Bar{x}_k}\Bar{r}_k \right\| + \sqrt{K}\|F(\Bar{x}_k)\|\|\Bar{r}_k\| \\
    \leq& \left\| F(\Bar{x}_k)+\cP_{x_k}^{\Bar{x}_k}\Bar{r}_k \right\| + \sqrt{K}(\|r_k\|+L_2\|X_k\Gamma_k\|)\|r_k\| \\
    \leq& \left\| F(\Bar{x}_k)+\cP_{x_k}^{\Bar{x}_k}\Bar{r}_k \right\| + \sqrt{K}\max\{1,L_2\}(\|r_k\|+\|X_k\Gamma_k\|)^2 \\
    \leq& \left\| F(\Bar{x}_k)+\cP_{x_k}^{\Bar{x}_k}\Bar{r}_k \right\| + \sqrt{K}\max\{1,L_2\}mM_1^2\sum_{i=0}^{k}\|r_{k-i}\|^2. \\
    \end{aligned}
\end{equation}
It remains to estimate $\left\| F(\Bar{x}_k)+\cP_{x_k}^{\Bar{x}_k}\Bar{r}_k \right\|$.  Set $\sum_{j=1}^0 = 0$. Let $w_k^i := \sum_{j=0}^{i}\gamma_j^k\dx{k-j}{k}$ and $z_k^i := \sum_{j=1}^{i}\dx{k-j}{k}$. Denote $v_k^i = \Exp_{x_k}(-w_k^i)$ and $y_k^i = \Exp_{x_k}(-z_k^i)$. Obviously $y_k^0 = v_k^0 = x_k$ and $v_k^{k} = \Bar{x}_k$. Thus, it has
\begin{equation}
    \begin{aligned}
        \left\| F(\Bar{x}_k)+\cP_{x_k}^{\Bar{x}_k}\Bar{r}_k \right\| 
        & = \left\| \cP_{\Bar{x}_k}^{x_k}F(\Bar{x}_k)+\Bar{r}_k \right\| 
        =\ \left\|
        \cP_{\Bar{x}_k}^{x_k}F(\Bar{x}_k)- F(x_k)-\sum_{i=1}^{k}\gamma_i^k\dr{k}{k-i} \right\| \\
        \leq\ &\sum_{i=1}^{k}\norm{\cP_{v_k^i}^{x_k} F(v_k^i)-\cP_{v_k^{i-1}}^{x_k} F(v_{k}^{i-1})-\gamma_i^k \left(\cP_{y_k^i}^{x_k} F(y_k^i)-\cP_{y_k^{i-1}}^{x_k} F(y_{k}^{i-1})\right)} \\
        +&\sum_{i=1}^{k}|\gamma_i^k| \left\| -\dr{k-i}{k}+\cP_{y_k^i}^{x_k}F(y_k^i)-\cP_{y_k^{i-1}}^{x_k}F(y_k^{i-1}) \right\|.
    \end{aligned}
\end{equation}
Applying \cref{prop:3}, we have for all $1\leq i \leq k$,
\begin{equation}
    \norm{\cP_{v_k^i}^{x_k} F(v_k^i)-\cP_{v_k^{i-1}}^{x_k} F(v_{k}^{i-1})-\gamma_i^k \left(\cP_{y_k^i}^{x_k} F(y_k^i)-\cP_{y_k^{i-1}}^{x_k} F(y_{k}^{i-1}\right)} \leq M_4\sum_{j=0}^i\|r_{k-j}\|^2.
\end{equation}
On the other hand,
\begin{equation}
    \begin{aligned}
    &|\gamma_i^k| \left\| \dr{k-i}{k}-\cP_{y_k^i}^{x_k}F(y_k^i)+\cP_{y_k^{i-1}}^{x_k}F(y_k^{i-1}) \right\| \\
    \leq& M_\Gamma \left\| \cT_{x_{k-1}}^{x_k}\cdots\cT_{x_{k-i+1}}^{x_{k-i+2}}\ r_{k-i+1}+\cP_{y_k^{i-1}}^{x_k}F(y_k^{i-1}) \right\|  + M_\Gamma \left\| \cT_{x_{k-1}}^{x_k}\cdots\cT_{x_{k-i}}^{x_{k-i+1}}\ r_{k-i}+\cP_{y_k^{i}}^{x_k}F(y_k^i) \right\|. 
    \end{aligned}
\end{equation}
 According to \cref{prop:2}, we know 
\[
\left\| \cT_{x_{k-1}}^{x_k}\cdots\cT_{x_{k-i}}^{x_{k-i+1}}\ r_{k-i}+\cP_{y_k^{i}}^{x_k}F(y_k^i) \right\| \leq M_3\sum_{j=0}^i\|r_{k-j}\|^2 \leq M_3\sum_{j=0}^k\|r_{k-j}\|^2,
\]
and
\[
\left\| \cT_{x_{k-1}}^{x_k}\cdots\cT_{x_{k-i+1}}^{x_{k-i+2}}r_{k-i+1}+\cP_{y_k^{i-1}}^{x_k}F(y_k^{i-1}) \right\| \leq M_3\sum_{j=0}^{i-1}\|r_{k-j}\|^2  \leq M_3\sum_{j=0}^k\|r_{k-j}\|^2.
\]
This implies for $1\leq i\leq k$,
\[
|\gamma_k^i| \left\| \dr{k-i}{k}-\cP_{y_k^i}^{x_k}F(y_k^i)+\cP_{y_k^{i-1}}^{x_k}F(y_k^{i-1}) \right\| \leq 2M_{\Gamma}M_3\sum_{j=0}^k\|r_{k-j}\|^2.
\]
Finally, we obtain
\begin{equation}
    \begin{aligned}
        \left\| F(\Bar{x}_k)+\cP_{x_k}^{\Bar{x}_k}\Bar{r}_k \right\| 
         \leq\ &  \sum_{i=1}^{k}\norm{\cP_{v_k^i}^{x_k} F(v_k^i)-\cP_{v_k^{i-1}}^{x_k} F(v_{k}^{i-1})-\gamma_i^k \left(\cP_{y_k^i}^{x_k} F(y_k^i)-\cP_{y_k^{i-1}}^{x_k} F(y_{k}^{i-1})\right)} \\
        +&\sum_{i=1}^{k}|\gamma_k^i| \left\| -\dr{k-i}{k}+\cP_{y_k^i}^{x_k}F(y_k^i)-\cP_{y_k^{i-1}}^{x_k}F(y_k^{i-1}) \right\| \\
        \leq\ &\sum_{i=1}^{k}M_4\sum_{j=0}^{i}\|r_{k-j}\|^2 + \sum_{i=1}^{k}2M_{\Gamma}M_3\sum_{j=0}^k\|r_{k-j}\|^2 \\
        \leq\ &m[M_4+2M_{\Gamma}M_3]\sum_{j=0}^k\|r_{k-j}\|^2.
    \end{aligned}
\end{equation}
Therefore, setting \[\widehat{M} := \left[(\kappa+1)(C+\sqrt{K})+\frac{c_0 \tilr}{2}+\sqrt{K}\max\{1,L_2\}\right]mM_1^2+m\left[M_4+2M_{\Gamma}M_3\right],\] we know the inequality \eqref{eqn:RAM_conv} holds.
\end{proof}


Based on \cref{thm:RAM_local_conv}, we give the following convergence theorem of RAM. Choose $\beta_k$ such that $\beta: = \inf_k \beta_k > \frac{L_2-L_1}{(1-\kappa)L_2}$. Set $\hat{r} = \min\left\{r,\frac{(1+CL_2)\tilr}{L_2}, \frac{L_1-L_2+(1-\kappa)\beta L_2}{\widehat{M}mL_2^2}\right\}$.
\begin{thm}\label{thm:RAM_final_local_conv}
    Suppose $\set{x_n}$ is generated by Algorithm \ref{alg:RAM} and suppose \cref{asmp:basic}-\cref{asmp:bounded_least_square} hold. If the initial point $x_0\in\cM$ satisfies $\dist(x_0,x^*)<\frac{\hat{r}}{1+CL_2}$, then for all $k \geq 1$, we have $\dist(x_k,x^*) < \hat{r}$ and $\|\cR^{-1}_{x_{k-1}}x_{k}\| < \tilr$. Moreover, inequality \ref{eqn:RAM_conv} always holds in this case, i.e we have the following local linear convergence result:
\begin{equation}
     \|r_{k+1}\| \leq \theta_k\left[(1-\beta_k)+\kappa\beta_k\right]\|r_k\|+\sum_{i=0}^{m_k}\cO(\|r_{k-i}\|^2) 
\end{equation}
where $\theta_k = \frac{\|\Bar{r}_k\|}{\|r_k\|} \leq 1$.
\end{thm}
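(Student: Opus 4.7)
The plan is to use strong induction on $k$, establishing simultaneously the two geometric invariants $\dist(x_k,x^*)<\hat{r}$ and $\|\cR_{x_{k-1}}^{-1}x_k\|<\tilr$. Once both hold at indices $1,\ldots,k$, inequality \eqref{eqn:RAM_conv} at step $k$ drops out of Theorem \ref{thm:RAM_local_conv} applied to the trailing window $x_{k-m_k},\ldots,x_k$, since $\hat{r}\leq r$ places that window inside $\cB_{\cM}(x^*,r)$.

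\emph{Base case $k=1$.} From $x_1=\cR_{x_0}(r_0)$ and Assumption \ref{asmp:F_lip} with $F(x^*)=0$, I have $\|r_0\|\leq L_2\dist(x_0,x^*)$; combined with the hypothesis $\dist(x_0,x^*)<\hat{r}/(1+CL_2)$ and $\hat{r}\leq (1+CL_2)\tilr/L_2$, this yields $\|r_0\|<\tilr$, i.e., $\|\cR_{x_0}^{-1}x_1\|<\tilr$. Lemma \ref{lem:basic} then gives $\dist(x_1,x^*)\leq C\|r_0\|+\dist(x_0,x^*)\leq (1+CL_2)\dist(x_0,x^*)<\hat{r}$.

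\emph{Inductive step.} Assume both invariants hold at $1,\ldots,k$. Assumption \ref{asmp:F_lip} applied between $x_j$ and $x^*$ gives $\|r_j\|<L_2\hat{r}$ for $j=0,\ldots,k$. Substituting into \eqref{eqn:RAM_conv} with $\sigma_k:=(1-\beta_k)+\kappa\beta_k\leq\bar\sigma:=1-(1-\kappa)\beta$ produces
\[
\|r_{k+1}\|<L_2\hat{r}\,\bigl[\bar\sigma+\widehat{M}(m+1)L_2\hat{r}\bigr].
\]
The last component of the min defining $\hat{r}$, namely $\hat{r}\leq (L_1-L_2+(1-\kappa)\beta L_2)/(\widehat{M}mL_2^2)=(L_1-\bar\sigma L_2)/(\widehat{M}mL_2^2)$, together with the choice $\beta>(L_2-L_1)/((1-\kappa)L_2)$ that forces $\bar\sigma L_2<L_1$, then yields $\|r_{k+1}\|<L_1\hat{r}$. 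To convert this to the distance bound $\dist(x_{k+1},x^*)<\hat{r}$ via the lower inequality of Assumption \ref{asmp:F_lip}, I first check $\dist(x_{k+1},x^*)<2\tilr$, which follows from Lemmas \ref{lem:basic} and \ref{lem:err_estimate} together with $\hat{r}\leq 2\tilr/(1+CM_1L_2m)$. Finally, $\|\cR_{x_k}^{-1}x_{k+1}\|=\|\dx{k}{k}\|\leq M_1(m+1)L_2\hat{r}<\tilr$ follows from Lemma \ref{lem:err_estimate} and $\hat{r}\leq r^\prime$, closing the induction.

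The main obstacle is the coordinated bookkeeping: the four constituents of the min defining $\hat{r}$ each protect a different invariant — $r^\prime$ keeps every accumulation $X_k\Gamma_k$ inside the regime of Lemma \ref{lem:err_estimate}, $(1+CL_2)\tilr/L_2$ handles the initial retraction step, $2\tilr/(1+CM_1L_2m)$ preserves the Lipschitz regime of $F$ at the next iterate, and the fourth encodes the strict-contraction margin $L_1-\bar\sigma L_2>0$. The role of the assumption $\beta>(L_2-L_1)/((1-\kappa)L_2)$ is precisely to make this margin positive, so that the $\widehat{M}$-weighted quadratic remainder in \eqref{eqn:RAM_conv} is absorbable by the linear term and the induction closes with strict inequalities.
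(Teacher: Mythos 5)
Your proof follows essentially the same route as the paper's: the same induction on the two invariants, the same base case via Lemma~\ref{lem:basic} and Assumption~\ref{asmp:F_lip}, and the same inductive step that feeds $\|r_j\|<L_2\hat{r}$ into \eqref{eqn:RAM_conv}, uses the fourth component of the $\min$ defining $\hat{r}$ to conclude $\|r_{k+1}\|<L_1\hat{r}$, and converts back to a distance bound after checking $\dist(x_{k+1},x^*)<2\tilr$. The only discrepancy is cosmetic: you count $(m+1)$ quadratic terms where the paper writes $m$, a constant-factor slack the paper itself absorbs without comment.
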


\begin{proof}
    We prove this by induction. For $k=1$, $\|\cR^{-1}_{x_0}x_1\| = \|r_0\| \leq L_2\dist(x_0,x^*) < \frac{L_2}{1+CL_2}\hat{r} < \tilr$. On the other hand, we have $\dist(x_1,x^*) \leq \dist(x_0,x^*)+\dist(x_1,x_0) < \frac{1}{1+CL_2}\hat{r}+C\|r_0\| < \hat{r}$. Suppose the theorem holds for all $j\leq k$. Since $\hat{r}\leq r$, by induction hypothesis and \cref{thm:RAM_local_conv}, we have
\[
\begin{aligned}
\|r_{k+1}\| \leq& \theta_k \left[(1-\beta_k)+\kappa \beta_k\right]\|r_k\| + \widehat{M}\sum_{j=0}^{m_k}\|r_{k-j}\|^2 \\
\leq& \left[(1-\beta_k)+\kappa \beta_k\right]L_2\hat{r}+\widehat{M}mL_2^2 \hat{r}^2 \\
\leq& \left[ (1-\beta_k)L_2+\kappa\beta_kL_2+\widehat{M}mL_2^2 \frac{L_1-L_2+(1-\kappa)\beta L_2}{\widehat{M}mL_2^2} \right]\hat{r} 
\leq L_1\hat{r}.
\end{aligned}
\]
Also, from the induction hypothesis and the beginning in the proof of \cref{thm:RAM_local_conv}, we already know that $\dist(x_{k+1},x^*) < 2\tilr$. Therefore by \cref{asmp:F_lip} $\dist(x_{k+1},x^*) \leq \frac{1}{L_1}\|r_{k+1}\| < \hat{r}$. Furthermore, by \cref{coro:dist_control} $\|\cR^{-1}_{x_{k}}x_{k+1}\| = \|\dx{k}{k}\| < \tilr/2 <\tilr$.  The proof is thus completed.
\end{proof}

\section{Regularied RAM and Global Convergence Analysis}
\label{sec:global_conv}
The global convergence of Anderson mixing remains an open problem, even in the Euclidean setting. In linear problems, Florian \cite{potra2013characterization} highlighted that the method may fail to converge to the correct solution if the initial point is not selected correctly. Although Anderson mixing has a close relationship with GMRES or multisecant methods, previous convergence theory for these methods cannot be applied to Anderson mixing. To investigate the global convergence of the Riemannian Anderson mixing (RAM) method, we propose slight modifications to the original algorithm, including adding another stepsize $\alpha_k$ in the iteration and regularization in the least-squares problem. Building on the work of \cite{wei2021stochastic}, we develop the Regularized Riemannian Anderson Mixing (RRAM) method, which is presented in \cref{alg:RRAM}.
\begin{algorithm}
    \caption{Regularized Riemannian Anderson Mixing Method}
    \label{alg:RRAM}
    \begin{algorithmic}[1]
        \REQUIRE $x_0 \in\cM, \epsilon, \beta_k,\delta_k > 0, \alpha_k \geq 0, m\in\N^*, k = 1$.
        \STATE $x_1 = \cR_{x_0}(r_0)$.
        \WHILE{$\|F(x_k)\| \geq \epsilon$}
        \STATE $r_k = -F(x_k)$.
        \STATE $m_k = \min\set{m,k}$.
        \STATE $\dx{k-i}{k} = \cT_{x_{k-1}}^{x_k}\dx{k-i}{k-1} \in\TM{x_k}, i = 1,\ldots,m_k$.
        \STATE $\dr{k-1}{k} = r_k-\cT_{x_{k-1}}^{x_k}\ r_{k-1}\in\TM{x_k}$.
        \IF{$k \geq 2$}
        \STATE $\dr{k-i}{k} = \cT_{x_{k-1}}^{x_k}\dr{k-i}{k-1} \in\TM{x_k}, i = 2,\ldots,m_k$.
        \ENDIF
        \STATE $X_k = [\dx{k-m_k}{k},\ldots,\dx{k-1}{k}], R_k = [\dr{k-m_k}{k},\ldots,\dr{k-1}{k}]$.
        \STATE $\Gamma_k = \argmin_{\Gamma\in\R^{m_k}}\|r_k-R_k\Gamma\|_{x_k}^2+\delta_k\|X_k\Gamma\|_{x_k}^2$.
        \STATE Choose $\alpha_k$ satisfying condition \eqref{eqn:alpha}.
        \STATE $\dx{k}{k} = \beta_k r_k-\alpha_k (X_k+\beta_k R_k)\Gamma_k  \in\TM{x_k}$.
        \STATE $x_{k+1} = \cR_{x_k}(\dx{k}{k})$.
        \STATE $k = k+1$.
        \ENDWHILE
    \end{algorithmic}
\end{algorithm}

We first examine the global convergence behavior of RRAM, which depends on the selection of the new parameters $\alpha_k$ and $\delta_k$. To facilitate our analysis, we make several basic assumptions regarding the vector field $F(x)$.

\begin{asmp}\label{asmp:global_base}
There exists a $C^2$ bounded below function defined on $\cM$ such that the vector field $F(x)$ is the Riemannian gradient of $f$. Namely there exists $f:\cM\rightarrow \R$ such that $f(x)\geq f_{\text{low}} > -\infty\quad \forall x\in\cM$ and that $F(x) = \grad f(x)$.
\end{asmp}

\begin{asmp}\label{asmp:global_Lip}
The Riemannian gradient of $f$ is Lipschitz continuous on $\cM$, i.e. there exists a constant $L_2$, such that for any $x,y\in\cM$ with well-defined $\cP_y^x$,
\[
\|\grad f(x) - \cP_y^x \grad f(y)\| \leq L_2 \dist(x,y).
\]
\end{asmp}

\begin{asmp}\label{asmp:compact_levelset}
The manifold $\cM$ and $C^2$ function $f$ satisfy that for any $x_0\in\cM$, the sublevel set $\cL := \set{x\in\cM: f(x) \leq f(x_0)}$ is always compact.
\end{asmp}

\cref{asmp:compact_levelset} and the following \cref{lem:R_Lip} help us secure the Lipschitz continuity of $\grad f(x)$.
\begin{lem}\cite[Lemma C.8]{levin2021finding}\label{lem:R_Lip}
Consider a retraction $\cR$ on $\cM$, a compact subset $\cL\subset\cM$ and a continuous, nonnegative function $h:\cL\rightarrow\R$. The set
\[
\cS := \set{(x,v)\in\tm: x\in\cL \textand \norm{v}\leq h(x)}
\]
is a compact subset of $\tm$. If $f:\cM\rightarrow\R$ is twice continuously differentiable, then there exists a constant $L$ such that for all $(x,v)\in\cS$, we have
\begin{equation}\label{eqn:R_Lip}
    \begin{aligned}
    \left| f(\cR_x(v)) - f(x)-\inprod{\grad f(x)}{v} \right| \leq \frac{L}{2}\norm{v}^2, \quad 
\norm{\grad \hat{f}_x(v)-\grad \hat{f}_x(0)} \leq L\norm{v},
\end{aligned}
\end{equation}
where $\hat{f}_x = f\circ\cR_x$.
\end{lem}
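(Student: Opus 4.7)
The plan is to split the proof into two stages. First I would establish that $\cS$ is compact in $\tm$, and then use this compactness together with the $C^2$-regularity of $f$ and $\cR$ to obtain uniform second-order Taylor estimates that yield both inequalities with a common constant $L$.

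For compactness of $\cS$: since $h$ is continuous on the compact set $\cL$ it attains a finite maximum $H$, so $\cS\subset\set{(x,v)\in\tm:x\in\cL,\,\norm{v}\leq H}$, which is bounded in the natural topology of the tangent bundle. Closedness follows because any convergent sequence $(x_n,v_n)\to(x,v)$ in $\tm$ satisfies $x\in\cL$ (as $\cL$ is closed) and $\norm{v}\leq h(x)$ by joint continuity of the bundle norm together with continuity of $h$. Being closed and bounded in a finite-dimensional bundle over a compact base, $\cS$ is compact.

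For the two inequalities, define $\hat f_x(v):=f(\cR_x(v))$ on $\TM{x}$. The retraction identities $\cR_x(0_x)=x$ and $\rmd\cR_x|_{0}=\id$ give $\hat f_x(0)=f(x)$ and $\grad\hat f_x(0)=\grad f(x)$. The map $(x,v)\mapsto\Hess\hat f_x(v)$ is continuous on $\tm$ wherever defined, so by compactness of $\cS$ there exists a finite $L>0$ such that $\norm{\Hess\hat f_x(tv)}\leq L$ holds uniformly on $\cS$ and along the line segments $\set{tv:t\in[0,1]}$, which remain in the domain of $\cR$ because $\cS$ is fibrewise star-shaped around the zero section and $\cR$ is defined on an open neighborhood of it (if necessary, shrink $h$ slightly to ensure this). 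Integrating the second-order Taylor remainder
\[
\hat f_x(v)-f(x)-\inprod{\grad f(x)}{v}=\int_0^1 (1-t)\,\inprod{\Hess\hat f_x(tv)[v]}{v}\,\rmd t
\]
gives the first inequality with constant $L/2$, while
\[
\grad\hat f_x(v)-\grad\hat f_x(0)=\int_0^1 \Hess\hat f_x(tv)[v]\,\rmd t
\]
gives the second with constant $L$.

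The main obstacle is making the continuity of $(x,v)\mapsto\Hess\hat f_x(v)$ rigorous across varying tangent spaces. I would handle this via finitely many local trivializations of $\tm$ over a relatively compact open cover of $\cL$; in each chart, $(x,v)\mapsto\hat f_x(v)$ becomes a genuine $C^2$ function of Euclidean coordinates, whose second $v$-derivative depends continuously on $(x,v)$ and is therefore bounded on a compact subset. A finite subcover together with smooth transition maps between trivializations then collapses the local bounds to a single global constant $L$, completing the proof.
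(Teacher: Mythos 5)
Your proposal is correct and follows essentially the same route as the cited source (the paper itself imports this lemma from Levin et al.\ without proof): compactness of $\cS$ from boundedness and closedness over the compact base, then a uniform bound $L$ on the fibrewise Hessian of $\hat f_x=f\circ\cR_x$ over $\cS$ obtained via finitely many trivializations, combined with Taylor's theorem with integral remainder and the retraction identities $\hat f_x(0)=f(x)$, $\grad\hat f_x(0)=\grad f(x)$. One minor simplification: the parenthetical about shrinking $h$ is unnecessary, since $\norm{tv}\leq\norm{v}\leq h(x)$ for $t\in[0,1]$ already shows $\cS$ is fibrewise star-shaped about the zero section, so the segments $\set{tv}$ stay inside $\cS$ and the uniform Hessian bound applies directly.
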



Our global convergence analysis centers on choosing suitable values for $\alpha_k$, $\beta_k$, and $\delta_k$ such that there is a sufficient reduction in the function value at each iteration. To achieve this, we require a positive constant $c$ such that
\begin{equation}
	f(x_k) - f(x_{k+1}) \geq c \norm{\grad f(x_k)}^2,\quad \forall k.
\end{equation}
By utilizing Proposition 4.7 in \cite{boumal2020introduction}, we can guarantee the global convergence of our RRAM method. Therefore it's necessary to derive the upper and lower bounds of $\Delta x_k^k$ (i.e., $\Delta x_k^k=x_{k+1}-x_k$) in terms of $\|\nabla f(x_k)\|$ or equivalently $\|r_k\|$. To achieve this, we need to analyze the boundedness of $\cH_k$, where $\cH_k(v) = \beta_k v-\alpha_k\cE_k(v)$, $\cE_k(v) = (X_k+\beta_k R_k)\Gamma_k(v)$, and $\Gamma_k(v)=\argmin_{\Gamma\in\R^{m_k}} \|v-R_k\Gamma\|^2_{x_k}+\delta_k\norm{X_k\Gamma}_{x_k}^2$. Notably, $\Gamma_k$ is a linear map due to it being a solution of a least-squares problem. Both $\cE$ and $\cH$ are also linear operators on $\TM{x_k}$, and we have $\dx{k}{k} = \cH_k(r_k)$. It remains to study the boundedness of $\cH_k$ and the boundedness analysis is based on prior work by Wei and Li in \cite{wei2021stochastic}.

\textbf{Lower Bound: }
We aim to find $\alpha_k$ such that the operator $\cH_k$ is coercive, namely there exists a constant $\mu\in(0,1)$, such that for all $v\in\TM{x_k}$, we have 
\begin{equation}\label{eqn:lb}
    \inprod{v}{\cH_k(v)} \geq \beta_k \mu\norm{v}^2. 
\end{equation}
Denote $\cH_k^*, \cE_k^*$ the adjoint operator of $\cH_k,\cE_k$ respectively. By the linearity of $\cH_k$, inequality \eqref{eqn:lb} is equivalent to 
\[
\lambda_{\text{min}}\left(\frac{1}{2}\left( \cH_k+\cH_k^* \right)\right) \geq \beta_k\mu .
\]
But $\lambda_{\text{min}}\left(\frac{1}{2}\left( \cH_k+\cH_k^* \right)\right) = \beta_k-\frac{1}{2}\alpha_k\lambda_{\text{max}}\left( \cE_k+\cE_k^* \right)$. Let $\lambda_k = \lambda_{\text{max}}\left( \cE_k+\cE_k^* \right)$. It suffices to choose $\alpha_k \geq 0$  satisfying 
\begin{equation}\label{eqn:alpha}
    \alpha_k \lambda_k \leq 2\beta_k(1-\mu).
\end{equation}

\textbf{Upper Bound: }
The following lemma, inspired by \cite[Lemma 1]{wei2021stochastic}, provides an upper bound estimation of $\cH_k$ and its proof can be found in the supplementary materials.
\begin{lem}\label{lem:upper_bound}
Suppose $\{x_k\}$ is generated by algorithm \ref{alg:RAM} and $\alpha_k \geq 0, \beta_k,\delta_k > 0$. Then for all $v\in\TM{x_k}$, we have:
\[
\norm{\cH_k(v)}^2 \leq 2\left[ \beta_k^2(1+2\alpha_k^2-2\alpha_k)+\alpha_k^2\delta_k^{-1} \right]\norm{v}^2.
\]
\end{lem}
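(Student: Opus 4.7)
The plan is to decompose $\cH_k(v)$ into two pieces whose norms are controlled by $\|v\|$ through the optimality of $\Gamma_k(v)$ in the regularized least-squares problem, and then to reassemble them with an elementary norm inequality. The constant $1 + 2\alpha_k^2 - 2\alpha_k = (1-\alpha_k)^2 + \alpha_k^2$ appearing in the target is the fingerprint of a Cauchy-Schwarz estimate on a two-term combination with weights $(1-\alpha_k)$ and $\alpha_k$, and this observation dictates the natural decomposition.

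First I would extract the optimality consequences. Since $\Gamma_k(v)$ minimizes $J(\Gamma) := \|v - R_k\Gamma\|^2 + \delta_k\|X_k\Gamma\|^2$ and the feasible choice $\Gamma = 0$ has $J(0) = \|v\|^2$, we obtain the single inequality
\[
\|v - R_k\Gamma_k(v)\|^2 + \delta_k\|X_k\Gamma_k(v)\|^2 \leq \|v\|^2,
\]
which separately yields $\|v - R_k\Gamma_k(v)\|^2 \leq \|v\|^2$ and $\|X_k\Gamma_k(v)\|^2 \leq \delta_k^{-1}\|v\|^2$. These are the only structural properties of $\Gamma_k(v)$ required in the sequel.

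Next I would rewrite $\cH_k(v) = \beta_k v - \alpha_k(X_k + \beta_k R_k)\Gamma_k(v)$ by absorbing the $R_k$ contribution into a convex-like combination,
\[
\cH_k(v) = \beta_k\bigl[(1-\alpha_k) v + \alpha_k (v - R_k\Gamma_k(v))\bigr] - \alpha_k X_k\Gamma_k(v),
\]
and apply $\|a + b\|^2 \leq 2\|a\|^2 + 2\|b\|^2$ to split off the $X_k$ term. The second term contributes $2\alpha_k^2 \delta_k^{-1}\|v\|^2$ directly from the optimality bound. For the first term, Cauchy-Schwarz applied to the weighted combination with coefficients $(1-\alpha_k), \alpha_k$ and vectors $v, v - R_k\Gamma_k(v)$, together with the optimality bound $\|v - R_k\Gamma_k(v)\|\leq \|v\|$, produces a factor proportional to $\beta_k^2\bigl((1-\alpha_k)^2 + \alpha_k^2\bigr)\|v\|^2 = \beta_k^2(1 + 2\alpha_k^2 - 2\alpha_k)\|v\|^2$. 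Collecting the two pieces yields the stated inequality.

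The main obstacle is constant tracking. Splitting $\cH_k$ naively into the three summands $\beta_k v$, $-\alpha_k\beta_k R_k\Gamma_k(v)$, and $-\alpha_k X_k\Gamma_k(v)$ and invoking $\|a+b+c\|^2 \leq 3(\|a\|^2 + \|b\|^2 + \|c\|^2)$ would produce a prefactor of $3$ rather than $2$, and would also lose the cross-cancellation between the $v$ and $R_k\Gamma_k(v)$ terms that is essential for the $\alpha_k$-dependence $(1-\alpha_k)^2 + \alpha_k^2$ to emerge. The two-term decomposition above is chosen precisely so that the weights $(1-\alpha_k)$ and $\alpha_k$ enter through a single Cauchy-Schwarz step and only one outer split is needed; once these inequalities are arranged in the right order, the final bound follows by direct algebra.
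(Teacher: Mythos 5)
Your decomposition of $\cH_k(v)$ and your starting point (the optimality inequality $\norm{v-R_k\Gamma_k(v)}^2+\delta_k\norm{X_k\Gamma_k(v)}^2\le\norm{v}^2$ obtained by testing against $\Gamma=0$) are exactly right and match the paper. However, the order in which you apply the estimates does not produce the stated constant, so there is a genuine quantitative gap. Your chain is: first the outer split $\norm{a+b}^2\le 2\norm{a}^2+2\norm{b}^2$ with $a=\beta_k[(1-\alpha_k)v+\alpha_k(v-R_k\Gamma_k(v))]$ and $b=-\alpha_k X_k\Gamma_k(v)$, then Cauchy--Schwarz inside $a$. The inner step gives $\norm{(1-\alpha_k)v+\alpha_k(v-R_k\Gamma_k(v))}^2\le((1-\alpha_k)^2+\alpha_k^2)\bigl(\norm{v}^2+\norm{v-R_k\Gamma_k(v)}^2\bigr)\le 2((1-\alpha_k)^2+\alpha_k^2)\norm{v}^2$, so after the outer factor of $2$ the first piece is bounded by $4\beta_k^2((1-\alpha_k)^2+\alpha_k^2)\norm{v}^2$, not the required $2\beta_k^2((1-\alpha_k)^2+\alpha_k^2)\norm{v}^2$. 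This is not merely a loose write-up: the intermediate claim $2\norm{a}^2\le 2\beta_k^2((1-\alpha_k)^2+\alpha_k^2)\norm{v}^2$ is false in general (take $v$ orthogonal to the range of $R_k$ so that $\Gamma_k(v)=0$ and $a=\beta_k v$, and take $\alpha_k=1/2$: the left side is $2\beta_k^2\norm{v}^2$ while the right side is $\beta_k^2\norm{v}^2$). So your chain ends at $4\beta_k^2(1+2\alpha_k^2-2\alpha_k)\norm{v}^2+2\alpha_k^2\delta_k^{-1}\norm{v}^2$, which is strictly weaker than the lemma.

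The missing idea is that the optimality inequality must be used \emph{jointly}, not split into the two separate bounds you extract. The paper writes $\cH_k(v)=\beta_k(1-\alpha_k)v+\beta_k\alpha_k(v-R_k\Gamma_k(v))-\alpha_k\delta_k^{-1/2}\cdot\delta_k^{1/2}X_k\Gamma_k(v)$ and applies the weighted Cauchy--Schwarz inequality $\norm{\sum_i c_iu_i}^2\le(\sum_i c_i^2)(\sum_i\norm{u_i}^2)$ to all three summands at once, with $u_1=v$, $u_2=v-R_k\Gamma_k(v)$, $u_3=\delta_k^{1/2}X_k\Gamma_k(v)$. Then $\sum_i\norm{u_i}^2=\norm{v}^2+\bigl(\norm{v-R_k\Gamma_k(v)}^2+\delta_k\norm{X_k\Gamma_k(v)}^2\bigr)\le 2\norm{v}^2$, where the second and third terms are absorbed together by the optimality inequality; this single factor of $2$, multiplied by $\sum_i c_i^2=\beta_k^2(1-\alpha_k)^2+\beta_k^2\alpha_k^2+\alpha_k^2\delta_k^{-1}$, gives exactly the stated bound. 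Your objection that a three-term treatment yields a prefactor of $3$ only applies to the unweighted inequality $\norm{a+b+c}^2\le3(\norm{a}^2+\norm{b}^2+\norm{c}^2)$; the weighted version, combined with the joint optimality bound, is what recovers the constant $2$.
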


\subsection{Global convergence of RRAM}
Now we arrive at the main theorem of this section, which is a direct corollary of \cite[Proposition 4.7]{boumal2020introduction}:

\begin{thm}
Suppose the \cref{asmp:global_base} and \cref{asmp:compact_levelset}  are satisfied. There exists a constant $L>0$, such that if we choose $\beta_k$ satisfying $0<\inf_k\beta_k \leq \sup_k\beta_k < \min \{1,\frac{\mu}{2L}\}$ and suppose that $\alpha_k$ satisfies inequality \eqref{eqn:alpha} and that $\alpha_k, \delta_k$ satisfy $\left(2+\frac{1}{\delta_k\beta_k^2}\right)\alpha_k^2-2\alpha_k-1<0$, then for any initial point $x_0\in\cM$,
\begin{itemize}
    \item there exists a stationary point of $f$ denoted by $x^*$, such that the sequence $\{x_k\}$ generated by \cref{alg:RRAM} has a subsequence converging to $x^*$,
    \item if \cref{asmp:global_Lip} holds further, and if for some positive integer $K$, the minimizing geodesic between $x^*$ and $x_k$ is unique for $k \geq K$, then the whole sequence $\{x_k\}$ converges to $x^*$ with converging rate $\cO(1/\sqrt{k})$.
\end{itemize}
\end{thm}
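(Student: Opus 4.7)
The plan is to reduce the theorem to a standard descent-method result on manifolds (the cited Proposition 4.7 of Boumal), for which it suffices to establish a \emph{sufficient decrease} of the form $f(x_k)-f(x_{k+1})\ge c\|\grad f(x_k)\|^2$ for some fixed $c>0$. Because $\cL=\{f\le f(x_0)\}$ is compact by \cref{asmp:compact_levelset} and the iterates stay in $\cL$ as soon as descent holds, \cref{lem:R_Lip} applied to $\cL$ (with the admissible vector norms $h(x)$ chosen large enough to cover all possible $\dx{k}{k}$) yields a single constant $L>0$ and the descent inequality
\begin{equation*}
f(x_{k+1})-f(x_k)\le \inprod{\grad f(x_k)}{\dx{k}{k}}+\tfrac{L}{2}\|\dx{k}{k}\|^2.
\end{equation*}

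Next I would read $\dx{k}{k}=\cH_k(r_k)$ and $\grad f(x_k)=-r_k$ and use the two quantitative bounds already assembled in the paper: the coercivity \eqref{eqn:lb}, which gives $\inprod{\grad f(x_k)}{\dx{k}{k}}=-\inprod{r_k}{\cH_k(r_k)}\le -\beta_k\mu\|r_k\|^2$, together with \cref{lem:upper_bound}, which gives $\|\dx{k}{k}\|^2\le 2[\beta_k^2(1+2\alpha_k^2-2\alpha_k)+\alpha_k^2\delta_k^{-1}]\|r_k\|^2$. Combining these two yields
\begin{equation*}
f(x_{k+1})-f(x_k)\le \bigl\{-\beta_k\mu+L\bigl[\beta_k^2(1+2\alpha_k^2-2\alpha_k)+\alpha_k^2\delta_k^{-1}\bigr]\bigr\}\|r_k\|^2.
\end{equation*}
The hypothesis $\beta_k<\mu/(2L)$ gives $\beta_k\mu>2L\beta_k^2$, so the bracket is bounded above by $L\beta_k^2\{(1+2\alpha_k^2-2\alpha_k)+\alpha_k^2/(\delta_k\beta_k^2)-2\}=L\beta_k^2\{(2+1/(\delta_k\beta_k^2))\alpha_k^2-2\alpha_k-1\}$, which is strictly negative by the assumption on $\alpha_k,\delta_k$. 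Taking $c$ to be a uniform positive lower bound obtained from $\inf_k\beta_k>0$ and the strict inequality (kept away from zero by the continuous dependence on the parameters) completes the sufficient-decrease step.

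With sufficient decrease in hand, telescoping gives $\sum_k\|\grad f(x_k)\|^2\le (f(x_0)-f_{\mathrm{low}})/c<\infty$, hence $\|\grad f(x_k)\|\to0$ and the classical rate $\min_{j\le k}\|\grad f(x_j)\|=\cO(1/\sqrt{k})$. Since $\{x_k\}\subset\cL$ is precompact and $\grad f$ is continuous, every limit point is stationary; this delivers the first bullet. For the second bullet, under \cref{asmp:global_Lip} and eventual uniqueness of the minimizing geodesic between $x_k$ and $x^*$, the parallel transport $\cP_{x_k}^{x^*}$ is well defined for large $k$, and the Lipschitz inequality plus $\grad f(x^*)=0$ yield $\|\grad f(x_k)\|\ge L_2^{-1}\dist(x_k,x^*)$ along the convergent subsequence; combining with the squared-summability of gradients and Boumal's argument promotes subsequential convergence to convergence of the whole sequence at the same $\cO(1/\sqrt{k})$ rate.

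The main obstacle I anticipate is verifying that the bracket in the descent inequality is bounded \emph{uniformly} away from zero: because $\alpha_k,\delta_k$ are only required to satisfy an open inequality, one must appeal to $\inf_k\beta_k>0$ and argue (perhaps via the way $\alpha_k$ is chosen through \eqref{eqn:alpha} and the boundedness of $\lambda_k$ on the compact sublevel set, which itself requires controlling $\cE_k+\cE_k^*$) that all quantities stay in a compact regime. Apart from that, the passage from subsequential to full convergence in the second bullet is the only other delicate point, and it is exactly where the eventual uniqueness of the minimizing geodesic is used to transport $\grad f(x_k)$ back to $T_{x^*}\cM$ unambiguously.
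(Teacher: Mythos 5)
Your proposal is correct and follows essentially the same route as the paper's proof: sufficient decrease obtained from \cref{lem:R_Lip} on the compact sublevel set (with $h$ chosen so that $(x_k,\dx{k}{k})\in\cS$ by induction), combined with the coercivity bound \eqref{eqn:lb} and \cref{lem:upper_bound}, followed by telescoping, compactness, and the error-bound argument for the second bullet. The uniformity ``obstacle'' you flag dissolves if you group the terms as the paper does: use the strict inequality on $\alpha_k,\delta_k$ only to bound $L\left[\beta_k^2(1+2\alpha_k^2-2\alpha_k)+\alpha_k^2\delta_k^{-1}\right]$ above by $2L\beta_k^2$, and keep $c:=\inf_k\left(\beta_k\mu-2L\beta_k^2\right)>0$, which is uniform because $\beta_k$ stays in a compact subinterval of $\left(0,\mu/(2L)\right)$.
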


\begin{proof}
Since the level set $\cL := \set{x\in\cM: f(x) \leq f(x_0)}$ is compact, choose $h(x) = 4\norm{\grad f(x)}$ in the definition of $\cS$ in \cref{lem:R_Lip}. Then by \cref{lem:R_Lip} $\cS$ is compact and there exists a constant $L > 0$ such that for all $(x,v)\in\cS$, \eqref{eqn:R_Lip} is satisfied. The choice of $\alpha_k,\beta_k$ and $\delta_k$ implies that
\[
\beta_k^2(1+2\alpha_k^2-2\alpha_k)+\alpha_k^2\delta_k^{-1} < 2\beta_k^2 < 2.
\]
Thus $\cH_k(r_k) \leq 2[\beta_k^2(1+2\alpha_k^2-2\alpha_k)+\alpha_k^2\delta_k^{-1}]\norm{r_k} < 4\norm{r_k}$. By induction, we can prove that $(x_k,\dx{k}{k}) \in\cS$ for all $k$. Now we prove that the function value achieves sufficient decrease property. Note that
\[
\begin{aligned}
 f(x_k) - f(x_{k+1}) 
& \geq  -\inprod{\grad f(x_k)}{\dx{k}{k}} - \frac{L}{2}\norm{\dx{k}{k}}^2 \\
& \geq  \beta_k\mu\norm{\grad f(x_k)}^2 - L\left[ \beta_k^2(1+2\alpha_k^2-2\alpha_k)+\alpha_k^2\delta_k^{-1} \right]\norm{\grad f(x_k)}^2 \\
& >  (\beta_k\mu-2L\beta_k^2)\norm{\grad f(x_k)}^2 \geq c\norm{\grad f(x_k)}^2,
\end{aligned}
\]
where $c:= \inf_k \beta_k\mu-2L\beta_k^2$ and the uniform boundedness of $\beta_k$ implies the positivity of $c$. Thus
\begin{equation}\label{eqn:global_conv}
    \begin{aligned}
        f(x_0)-f_{\text{low}} &\geq f(x_0) - f(x_N) =\sum_{k=0}^{N-1} f(x_k)-f(x_{k+1}) \\
        &\geq c\sum_{k=0}^{N-1}\norm{\grad f(x_k)}^2 \geq c N \min_{1\leq k\leq N}\norm{\grad f(x_k)}^2.
    \end{aligned}
\end{equation}
In particular, we obtain 
\[
\min_{1\leq k\leq N}\norm{\grad f(x_k)} \leq \sqrt{\frac{f(x_0)-f_{\text{low}}}{c}}\frac{1}{\sqrt{N}}.
\]
On the other hand, let $N\rightarrow \infty$ in inequality \eqref{eqn:global_conv} and obtain:
\[
f(x_0) - f_{\text{low}} \geq c\sum_{k=0}^\infty \norm{\grad f(x_k)}^2.
\]
Therefore the positive series $\sum_{k=0}^\infty \norm{\grad f(x_k)}^2$ converges and as a consequence we know that $\lim_{k\rightarrow\infty}\norm{\grad f(x_k)} = 0$. The monotonicity of $f(x_k)$ and the compactness of $\cL$ imply that there exists a limit point $x^*$ of $x_k$, such that $x^*$ is a stationary point of $f$ by the continuity of $\grad f$. Finally, if \cref{asmp:global_Lip} holds, we have $\dist(x_k,x^*) \leq \frac{1}{L_1}\norm{\grad f(x_k)} \rightarrow 0$ as $k\rightarrow \infty$, which yields the convergence of the sequence $\{x_k\}$.
\end{proof}

\subsection{Local convergence of RRAM}
Notice that if we set $\alpha_k = 1$ and $\delta_k = 0$, \cref{alg:RRAM} is reduced to \cref{alg:RAM}. We now present the local convergence result of \cref{alg:RRAM}, which is almost the same as \cref{thm:RAM_final_local_conv}.

\begin{thm}\label{thm:RRAM_local_conv}
Suppose $\set{x_k}$ is generated by \cref{alg:RRAM} and \cref{asmp:basic}-\cref{asmp:bounded_least_square} are satisfied. Denote $\Bar{r}_k = r_k-\alpha_k R_k\Gamma_k$. We further assume $0\leq \alpha_k \leq 1$. If the initial point $x_0$ satisfies $\dist(x_0,x^*) < \hat{r}$ (here the definition of $\hat{r}$ is the same as \cref{thm:RAM_final_local_conv}), we then have the following local linear convergence result:
\begin{equation}\label{eqn:RRAM_local_conv}
     \|r_{k+1}\| \leq \theta_k\left[(1-\beta_k)+\kappa\beta_k\right]\|r_k\|+\sum_{i=0}^{m_k}\cO(\|r_{k-i}\|^2)
\end{equation}
where $\theta_k = \frac{\|\Bar{r}_k\|}{\|r_k\|} \leq 1$.
\end{thm}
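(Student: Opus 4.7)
The plan is to mimic the proof of \cref{thm:RAM_final_local_conv} while tracking the additional parameters $\alpha_k\in[0,1]$ and $\delta_k>0$. First I would observe that the regularized least-squares step still gives $\|r_k-R_k\Gamma_k\|\leq\|r_k\|$, because the regularized objective at $\Gamma=0$ equals $\|r_k\|^2$, so $\|r_k-R_k\Gamma_k\|^2\leq\|r_k-R_k\Gamma_k\|^2+\delta_k\|X_k\Gamma_k\|^2\leq\|r_k\|^2$. Writing $\bar r_k=(1-\alpha_k)r_k+\alpha_k(r_k-R_k\Gamma_k)$ and using $\alpha_k\leq 1$ yields $\theta_k=\|\bar r_k\|/\|r_k\|\leq 1$, matching the claim of the theorem.

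Second, I would redefine the auxiliary points as $\bar x_k:=\Exp_{x_k}(-\alpha_k X_k\Gamma_k)$, $x'_{k+1}:=\Exp_{x_k}(-\alpha_k X_k\Gamma_k+\beta_k\bar r_k)$, and $\tilde x_{k+1}:=\Exp_{x_k}(-\alpha_k X_k\Gamma_k+\bar r_k)$. \cref{coro:dist_control} with parameters $\alpha=\alpha_k\leq 1$ and $\beta\in\{0,\beta_k,1\}$ ensures that these points remain inside $\cB_{\cM}(x_k,\tilr/2)$. Following the same decomposition
\[
\|r_{k+1}\|\leq\dist(g(x_{k+1}),g(x'_{k+1}))+\dist(g(x'_{k+1}),g(\bar x_k))+\dist(g(\bar x_k),x_{k+1}),
\]
and using the contraction property of $g$ from \cref{asmp:contraction}, together with \cref{lem:basic}, \cref{lem:coslaw}, and \cref{lem:err_estimate}, one obtains the leading term $\theta_k[(1-\beta_k)+\kappa\beta_k]\|r_k\|$ plus remainders of order $\sum_{i=0}^{m_k}\mathcal{O}(\|r_{k-i}\|^2)$, exactly as in the proof of \cref{thm:RAM_local_conv}. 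Each factor $\alpha_k\leq 1$ enters only as a harmless multiplicative constant.

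The central computation is again the bound on $\|F(\bar x_k)+\cP_{x_k}^{\bar x_k}\bar r_k\|$. Setting $w_k^i:=\alpha_k\sum_{j=0}^{i}\gamma_j^k\dx{k-j}{k}$, $v_k^i:=\Exp_{x_k}(-w_k^i)$, $z_k^i:=\sum_{j=1}^{i}\dx{k-j}{k}$, and $y_k^i:=\Exp_{x_k}(-z_k^i)$, the telescoping identity
\[
\cP_{\bar x_k}^{x_k}F(\bar x_k)+\bar r_k=\sum_{i=1}^{k}\bigl[\cP_{v_k^i}^{x_k}F(v_k^i)-\cP_{v_k^{i-1}}^{x_k}F(v_k^{i-1})-\alpha_k\gamma_i^k\dr{k-i}{k}\bigr]
\]
splits each summand as in \cref{thm:RAM_local_conv}. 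The quadratic remainder bound of \cref{prop:3} applies with coefficient $\alpha_k\gamma_i^k$ in place of $\gamma_i^k$ and the rescaled $w_k^i$; since that proposition relies only on a second-order Taylor expansion of $F$ together with $|\alpha_k\gamma_i^k|\leq M_\Gamma$, the same $\mathcal{O}(\|r_{k-j}\|^2)$ estimate carries over by absorbing $\alpha_k\leq 1$ into the constant. The remaining contribution is bounded by \cref{prop:2}, which is unchanged because $y_k^i$ does not involve $\alpha_k$. The regularization parameter $\delta_k$ never appears in these estimates, thanks to the a priori bound $\|\Gamma_k\|_\infty\leq M_\Gamma$ from \cref{asmp:bounded_least_square}.

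The inductive argument of \cref{thm:RAM_final_local_conv} then transfers verbatim: the one-step estimate \eqref{eqn:RRAM_local_conv} combined with \cref{asmp:F_lip} and the choice of $\hat r$ preserves $\dist(x_{k+1},x^*)<\hat r$ and $\|\cR_{x_k}^{-1}x_{k+1}\|<\tilr$. The main obstacle I anticipate is verifying that the auxiliary propositions (\cref{lem:err_estimate}, \cref{prop:multi_step_err}, \cref{prop:2}, \cref{prop:3}) remain valid after inserting $\alpha_k$ inside the exponential maps; since $\alpha_k\in[0,1]$ only shrinks the relevant tangent vectors and never affects the bound on $\Gamma_k$, every inequality in those lemmas is preserved with an unchanged or smaller constant, leaving only a routine bookkeeping of $\alpha_k$ factors through the telescoping sums.
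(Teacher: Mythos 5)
Your proposal is correct and follows essentially the same route the paper intends (and carries out in its omitted proof): derive $\theta_k\le 1$ from the regularized least-squares objective evaluated at $\Gamma=0$, insert $\alpha_k$ into the auxiliary points $\bar x_k$, $x'_{k+1}$, $\tilde x_{k+1}$, apply \cref{prop:3} with $\lambda=\alpha_k\gamma_i^k$ and \cref{prop:2} unchanged, and then rerun the induction of \cref{thm:RAM_final_local_conv}. No gaps.
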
 

The proof of the local convergence of RRAM (\cref{thm:RRAM_local_conv}) is analogous to that of RAM and thus omitted for brevity. 
\section{Experiments}\label{sec:experiment}
In this section, we evaluate two algorithms, RAM and RRAM, on five problems: the max-cut problem, minimization of the Brockett cost function, Karcher mean of positive definite matrices, and low-rank matrix completion. We compare RAM and RRAM with two other algorithms, Riemannian gradient descent (RGD) and Riemannian limited memory BFGS (RLBFGS), all of these four algorithms implemented using the Matlab Manopt toolbox \cite{boumal2014manopt}. In particular, RGD and RLBFGS methods are implemented by using \texttt{steepestdescent} and \texttt{rlbfgs} in Manopt toolbox with default settings, respectively. RAM and RRAM use a fixed value of $\beta_k = 0.6$, while the maximum iteration number is set to be 1000 and termination occurs when $\|\grad f(x_k)\| < 10^{-6}$. The default setting of $m=3$ is used for RAM and RRAM, and all four algorithms share the same initial point, generated randomly. For RGD and RLBFGS, iterations are terminated if the stepsize computed in the line-search procedure is smaller than $10^{-10}$. Each experiment is repeated 10 times, and
\begin{equation}\label{eqn:setting}
    \begin{aligned}
        &\textbf{rate} = \text{the number of times an algorithm converges /10} \\
        &\textbf{grad} = \text{the geometric mean of $\norm{\grad f}$ over 10 experiments} \\
        &\textbf{t} = \text{the geometric mean of the time an algorithm spends over 10 experiments.}
    \end{aligned}        
\end{equation}
Before presenting experimental results, we provide details of our implementations for RAM and RRAM.

\subsection{Details Concerning Numerical Implementations of \texorpdfstring{\cref{alg:RAM}}{} and \texorpdfstring{\cref{alg:RRAM}}{}}

In the analysis of local convergence of Riemannian optimization algorithms, it is commonly assumed that the mapping $g(x) = \Exp_x(-\grad f(x))$ is a contraction in some neighborhood $\cU$ of the optimal solution, as stated in \cref{asmp:contraction}. One possible approach to satisfy this assumption in numerical experiments is to scale the cost function $f(x)$ by a positive constant $\lambda>0$. Specifically, instead of optimizing the original function $f(x)$, we optimize $\lambda f(x)$, where $\lambda$ is chosen to be a moderate constant. This scaling does not affect the stationary points of the function, but alters the fixed-point mapping to $g_{\lambda}(x) := \Exp_x(-\lambda\grad f(x))$. In Euclidean space, for a $C^2$ function $f$ with a positive definite Hessian matrix at a stationary point $x^*$, the fixed-point mapping can be expressed as $g_{\lambda}(x) = x-\lambda\nabla f(x)$, which satisfies $\|\mathrm{D}g_{\lambda}(x^*)\|<1$ for sufficiently small $\lambda$, indicating that $g_{\lambda}$ is a contraction near $x^*$. In the Riemannian setting, a proper scaling factor $\lambda$ may also exist such that $\|\mathrm{D}g_{\lambda}(x^*)\|<1$ at a stationary point $x^*$. Empirical studies indicate the importance of the scaling factor $\lambda$. For all the experiments conducted on matrix manifolds, we set $\lambda=1/\max(n,m)$ for both RAM and RRAM, where $n$ and $m$ are the dimensions of the matrix $X\in\cM$.

In \cref{sec:alg}, we thoroughly investigated the local convergence property of RAM algorithm. However, we noted that the global convergence of Anderson mixing remains an open problem even in Euclidean space. To address this issue, we adopt a warm-start scheme to obtain an initial point that is more likely to be close to a stationary point. Specifically, we use RGD as the warm-start method and stop the iteration when $\|\grad f(x_k)\| < 10^{-2}$ or the maximum iteration number of 100 is reached. We emphasize that since RRAM has a global convergence property, we do not employ any warm-start scheme for RRAM.

Regarding RRAM, we do not usually need to verify the positive definite condition \eqref{eqn:alpha}. Instead, we set $\alpha_k = 1$ and check whether $\dx{k}{k}$ is a descent direction. If it is not, we set $\alpha_k = 0$. As for the choice of $\delta_k$ in Algorithm \ref{alg:RRAM}, we utilize the AdaSAM method proposed in \cite{wei2021stochastic}. Specifically, we set $\delta_k = \frac{c_1 \|r_k\|}{\|\dx{k}{k}\|}$, where $c_1 = 10^{-7}$ by default.

We first present the \textbf{rate} result of all experiments in \cref{tab:rate}, before we go into the details of each experiments in the following sections.
\begin{table}[htbp]
\centering
\begin{tabular}{|lccccc|}
\hline
\multicolumn{6}{|c|}{Max Cut}                                                                                                                                                        \\ \hline
\multicolumn{1}{|c|}{(n,p)}  & \multicolumn{1}{c|}{(1000,20)} & \multicolumn{1}{c|}{(2000,40)}  & \multicolumn{1}{c|}{(8000, 80)} & \multicolumn{1}{c|}{(10000, 100)} & (25000, 150) \\ \hline
\multicolumn{1}{|l|}{RAM}    & \multicolumn{1}{c|}{10/10}     & \multicolumn{1}{c|}{10/10}      & \multicolumn{1}{c|}{10/10}      & \multicolumn{1}{c|}{10/10}        & 10/10        \\ \hline
\multicolumn{1}{|l|}{RRAM}   & \multicolumn{1}{c|}{10/10}     & \multicolumn{1}{c|}{10/10}      & \multicolumn{1}{c|}{10/10}      & \multicolumn{1}{c|}{10/10}        & 10/10        \\ \hline
\multicolumn{1}{|l|}{RLBFGS} & \multicolumn{1}{c|}{5/10}      & \multicolumn{1}{c|}{5/10}       & \multicolumn{1}{c|}{3/10}       & \multicolumn{1}{c|}{6/10}         & 5/10         \\ \hline
\multicolumn{1}{|l|}{RGD}    & \multicolumn{1}{c|}{0/10}      & \multicolumn{1}{c|}{0/10}       & \multicolumn{1}{c|}{0/10}       & \multicolumn{1}{c|}{0/10}         & 0/10         \\ \hline
\multicolumn{6}{|c|}{Brockett cost}                                                                                                                                                  \\ \hline
\multicolumn{1}{|c|}{(n,p)}  & \multicolumn{1}{c|}{(200,5)}   & \multicolumn{1}{c|}{(800,5)}    & \multicolumn{1}{c|}{(200,10)}   & \multicolumn{1}{c|}{(800, 10)}    & (1500, 10)   \\ \hline
\multicolumn{1}{|l|}{RAM}    & \multicolumn{1}{c|}{9/10}      & \multicolumn{1}{c|}{6/10}       & \multicolumn{1}{c|}{7/10}       & \multicolumn{1}{c|}{2/10}         & 1/10         \\ \hline
\multicolumn{1}{|l|}{RRAM}   & \multicolumn{1}{c|}{9/10}      & \multicolumn{1}{c|}{9/10}       & \multicolumn{1}{c|}{5/10}       & \multicolumn{1}{c|}{2/10}         & 0/10         \\ \hline
\multicolumn{1}{|l|}{RLBFGS} & \multicolumn{1}{c|}{0/10}      & \multicolumn{1}{c|}{0/10}       & \multicolumn{1}{c|}{0/10}       & \multicolumn{1}{c|}{0/10}         & 0/10         \\ \hline
\multicolumn{1}{|l|}{RGD}    & \multicolumn{1}{c|}{0/10}      & \multicolumn{1}{c|}{0/10}       & \multicolumn{1}{c|}{0/10}       & \multicolumn{1}{c|}{0/10}         & 0/10         \\ \hline
\multicolumn{6}{|c|}{Karcher Mean}                                                                                                                                                   \\ \hline
\multicolumn{1}{|c|}{(n,m)}  & \multicolumn{1}{c|}{(100,20)}  & \multicolumn{1}{c|}{(200,10)}   & \multicolumn{1}{c|}{(500, 5)}   & \multicolumn{1}{c|}{(800, 3)}     & (1000, 2)    \\ \hline
\multicolumn{1}{|l|}{RAM}    & \multicolumn{1}{c|}{10/10}     & \multicolumn{1}{c|}{10/10}      & \multicolumn{1}{c|}{10/10}      & \multicolumn{1}{c|}{10/10}        & 10/10        \\ \hline
\multicolumn{1}{|l|}{RRAM}   & \multicolumn{1}{c|}{10/10}     & \multicolumn{1}{c|}{10/10}      & \multicolumn{1}{c|}{10/10}      & \multicolumn{1}{c|}{10/10}        & 10/10        \\ \hline
\multicolumn{1}{|l|}{RLBFGS} & \multicolumn{1}{c|}{10/10}     & \multicolumn{1}{c|}{10/10}      & \multicolumn{1}{c|}{10/10}      & \multicolumn{1}{c|}{10/10}        & 9/10         \\ \hline
\multicolumn{1}{|l|}{RGD}    & \multicolumn{1}{c|}{10/10}     & \multicolumn{1}{c|}{10/10}      & \multicolumn{1}{c|}{9/10}       & \multicolumn{1}{c|}{10/10}        & 9/10         \\ \hline
\multicolumn{6}{|c|}{Matrix Completion}                                                                                                                                              \\ \hline
\multicolumn{1}{|c|}{(n,k)}  & \multicolumn{1}{c|}{(5000,20)} & \multicolumn{1}{c|}{(10000,20)} & \multicolumn{1}{c|}{(2000, 40)} & \multicolumn{1}{c|}{(5000, 40)}   & (10000, 40)  \\ \hline
\multicolumn{1}{|l|}{RAM}    & \multicolumn{1}{c|}{10/10}     & \multicolumn{1}{c|}{10/10}      & \multicolumn{1}{c|}{10/10}      & \multicolumn{1}{c|}{10/10}        & 10/10        \\ \hline
\multicolumn{1}{|l|}{RRAM}   & \multicolumn{1}{c|}{10/10}     & \multicolumn{1}{c|}{10/10}      & \multicolumn{1}{c|}{10/10}      & \multicolumn{1}{c|}{10/10}        & 10/10        \\ \hline
\multicolumn{1}{|l|}{RLBFGS} & \multicolumn{1}{c|}{10/10}     & \multicolumn{1}{c|}{10/10}      & \multicolumn{1}{c|}{10/10}      & \multicolumn{1}{c|}{10/10}        & 10/10        \\ \hline
\multicolumn{1}{|l|}{RGD}    & \multicolumn{1}{c|}{10/10}     & \multicolumn{1}{c|}{10/10}      & \multicolumn{1}{c|}{10/10}      & \multicolumn{1}{c|}{10/10}        & 10/10        \\ \hline
\end{tabular}
\caption{\textbf{rate} of each algorithms in different experiments. For the definition of \textbf{rate}, see \cref{eqn:setting}.}
\label{tab:rate}
\end{table}

\subsection{Max-cut Problem}
The max-cut problem aims to partition the vertex set of a graph into two non-empty sets such that their intersection is empty, while maximizing the weight of the edges between them \cite{10.1145/227683.227684}. A graph is represented by a weight matrix $W\in\R^{n\times n}$. However, this problem is known to be NP-hard. To address this issue, we consider the following relaxation problem \cite{article}:
\begin{equation}\label{eq:maxcut}
	\max_{V=\left[V_1, \cdots, V_n\right]} \operatorname{tr}\left(C V^{\top} V\right) \text { s.t. }\left\|V_i\right\|_2=1,\ V_i\in\mathbb{R}^p, i=1, \cdots, n,
\end{equation}
where $C = \frac{1}{4}(\operatorname{diag}(W\mathbf{1})-W)$ is the graph Laplacian matrix divided by 4, with $\mathbf{1}\in\R^n$ representing a vector of all ones. We randomly generate $W$ in our experiments and use the parameter $\textbf{tau}\in (0,1)$ to control the sparsity of the graph, with larger values of $\textbf{tau}$ leading to fewer edges. For the numerical experiments, we set the maximum iteration number to 150 and chose $m=1$ for both RAM and RRAM. The results are presented in \cref{tab:Maxcut} and \cref{fig:maxcut}, where we set $\textbf{tau}=0.3$. Our findings indicate that RAM and RRAM perform better than RLBFGS and RGD in terms of achieving higher accuracy and reaching the termination condition we set. Although the average computation time for RAM, RRAM, and RLBFGS is similar, our proposed algorithms have a superior accuracy performance on average.

\begin{figure}[htbp]
	\centering
	\includegraphics[width=0.48\textwidth]{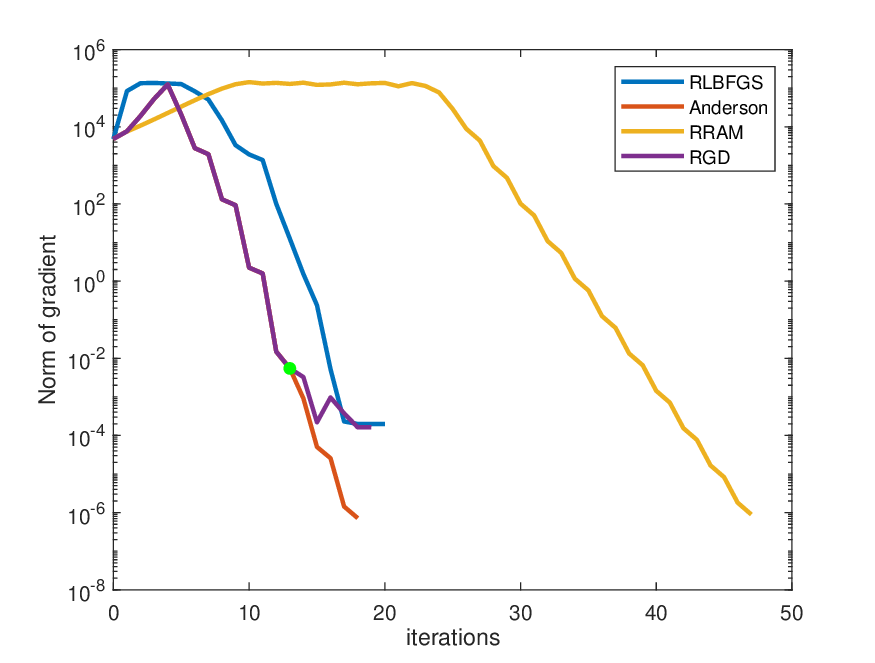}
	\includegraphics[width=0.48\textwidth]{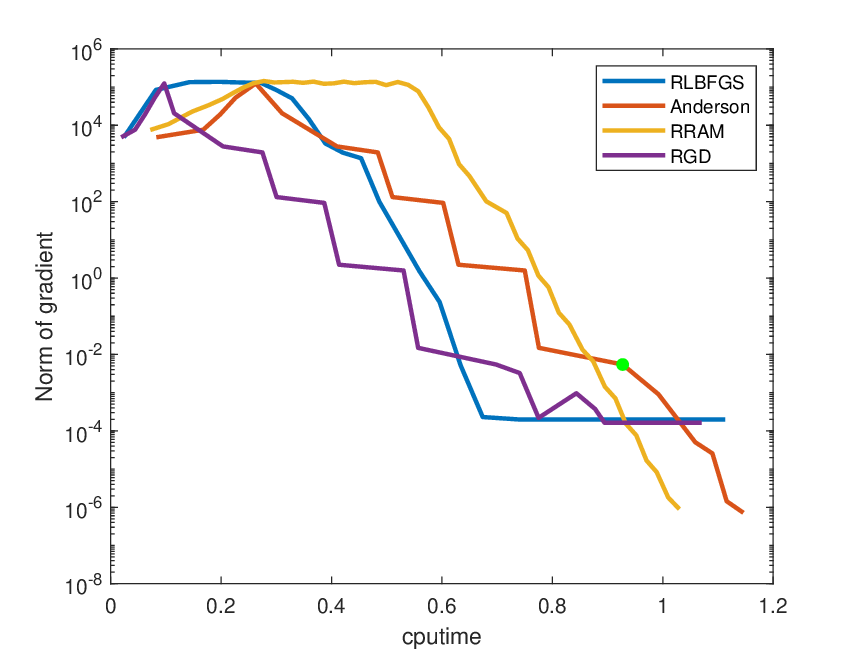}
	\caption{Max-cut problem, $n=5000, p=60$. The green point represents the moment when the warm-start of RAM ends.}
	\label{fig:maxcut}
\end{figure}

{
\begin{table}[htbp]
\centering
\begin{tabular}{|clc|c|c|c|c|c|}
\hline
\multicolumn{3}{|c|}{(n,p)}                                     & (1000,20) & (2000,40)  & (8000, 80) & (10000, 100) & (25000, 150) \\ \hline
\multicolumn{2}{|c|}{\multirow{3}{*}{RAM}} & \bgrad      & $4.06_{-7}$  & $3.94_{-7}$     & $2.65_{-7}$   & $2.29_{-7}$     & $1.02_{-7}$     \\  
\multicolumn{2}{|c|}{}                          & \Bt          & 0.12      & 0.21             & 2.41       & 4.71         & 38.85        \\  \hline
\multicolumn{2}{|c|}{\multirow{3}{*}{RRAM}}     & \bgrad      & $4.15_{-7}$  & $4.70_{-7}$     & $4.03_{-7}$   & $5.37_{-7}$     & $6.33_{-7}$     \\ 
\multicolumn{2}{|c|}{}                          & \Bt          & 0.10      & 0.23             & 2.10       & 4.47         & 38.22        \\  \hline
\multicolumn{2}{|c|}{\multirow{3}{*}{RLBFGS}}   & \bgrad      & $3.34_{-7}$  & $1.57_{-6}$     & $1.30_{-5}$   & $1.35_{-6}$     & $2.81_{-6}$     \\  
\multicolumn{2}{|c|}{}                          & \Bt         & 0.18      & 0.35            & 2.75       & 4.63         & 34.43        \\  \hline
\multicolumn{2}{|c|}{\multirow{3}{*}{RGD}}      & \bgrad      & $4.23_{-5}$  & $8.81_{-5}$     & $5.50_{-4}$   & $5.07_{-4}$     & $5.63_{-4}$     \\  
\multicolumn{2}{|c|}{}                          & \Bt         & 0.15      & 0.32             & 3.39       & 6.37         & 52.64        \\  \hline
\end{tabular}
\caption{Experiment Results: Max-cut Problem. The subscript $-k$ indicates a scale of $10^{-k}$. For the meaning of \textbf{grad} and \textbf{t}, see \cref{eqn:setting}}
\label{tab:Maxcut}
\end{table}
}

\subsection{Minimization of the Brockett Cost Function}
In this section, we study the minimization of the Brockett cost function over the Stiefel manifold. The optimization problem can be formulated as follows \cite[section 4.8.1]{absil2009optimization}:
\begin{equation*}
	\min_{X\in\mathbb{R}^{n\times p}} \operatorname{tr}(X^\top A X N), \text{ s.t. }X^\top X = I_p,
\end{equation*}
where $N = \operatorname{diag}(\mu_1,\ldots,\mu_p)$ and $\mu_1 > \cdots > \mu_p > 0$. Here, $A\in\mathbb{R}^{n\times n}$ is a symmetric matrix. This is an eigenvalue problem, as it has been proved that the columns of a global minimizer are eigenvectors corresponding exactly to the $p$ smallest eigenvalues of $A$.

In our experiments, we set $N = \operatorname{diag}(p,p-1,\ldots,1)$ and generate $A$ as $A = \frac{1}{2}(C+C^\top)$ using Matlab's $\texttt{randn}$ function. We choose $m=p+1$ and set the maximum number of iterations to be 1500. The experimental results are presented in \cref{tab:Brockett} and \cref{fig:Brockett}. As the problem size increases, all four algorithms have difficulty meeting the termination criterion. However, our RAM and RRAM methods require less computation time than RLBFGS, while RGD almost always fails to converge.

\begin{figure}[htbp]
	\centering
	\includegraphics[width=0.48\textwidth]{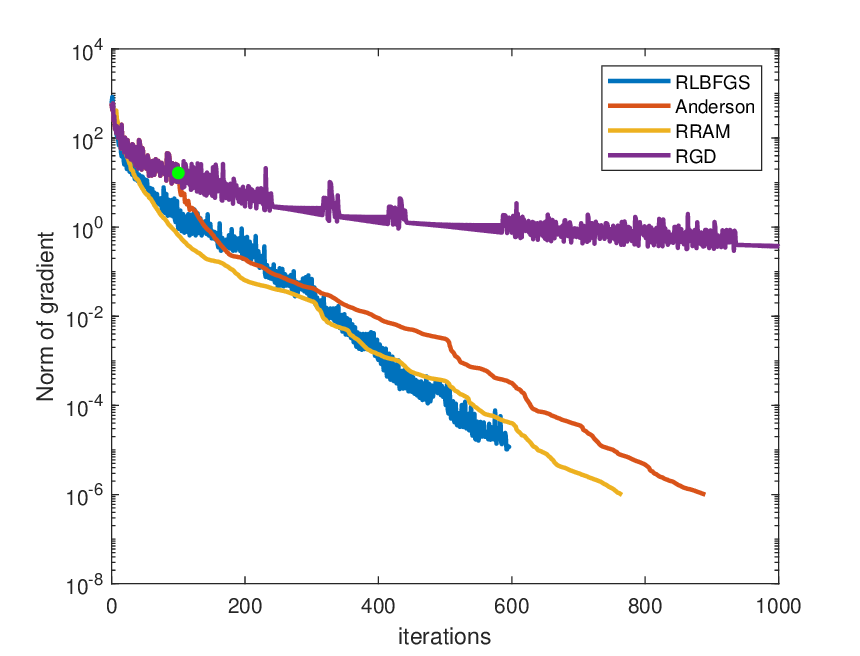}
	\includegraphics[width=0.48\textwidth]{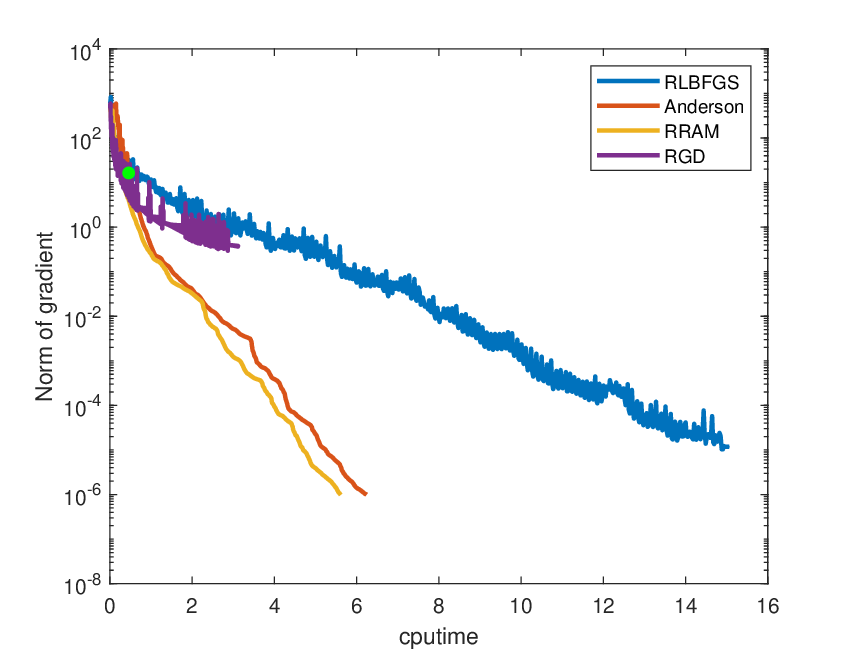}
	\caption{Minimization of Brockett cost, $n=800, p=5$. The green point represents the moment when the warm-start of RAM ends.}
	\label{fig:Brockett}
\end{figure}

\begin{table}[htbp]
\centering
\begin{tabular}{|clc|c|c|c|c|c|}
\hline
\multicolumn{3}{|c|}{(n,p)}                                        & (200,5) & (800,5)  & (200,10) & (800,10) & (1500,10) \\ \hline
\multicolumn{2}{|c|}{\multirow{3}{*}{RAM}} & \bgrad    & $1.08_{-6}$   & $1.01_{-5}$       & $1.83_{-6}$    & $2.54_{-4}$    & $4.66_{-4}$     \\ 
\multicolumn{2}{|c|}{}                          & \Bt        & 8.95       & 12.26             & 16.53       & 30.97       & 53.01        \\ 
 \hline
\multicolumn{2}{|c|}{\multirow{3}{*}{RRAM}}     & \bgrad    & $1.47_{-6}$   & $1.20_{-6}$       & $1.13_{-5}$    & $1.00_{-4}$    & $1.78_{-4}$     \\  
\multicolumn{2}{|c|}{}                          & \Bt       & 7.08       & 10.00             & 18.82       & 30.40       & 54.95        \\ 
 \hline
\multicolumn{2}{|c|}{\multirow{3}{*}{RLBFGS}}   & \bgrad   & $3.13_{-6}$   & $1.35_{-5}$       & $1.56_{-5}$    & $2.12_{-4}$    & $4.77_{-4}$     \\  
\multicolumn{2}{|c|}{}                          & \Bt        & 12.75      & 23.77            & 24.37       & 45.25       & 83.67        \\ 
 \hline
\multicolumn{2}{|c|}{\multirow{3}{*}{RGD}}      & \bgrad   & $2.86_{-1}$   & 1.50       & 1.34    & 3.08    & 4.49     \\ 
\multicolumn{2}{|c|}{}                          & \Bt       & 4.83       & 6.94               & 4.94        & 6.99        & 11.09        \\ 
 \hline
\end{tabular}
\caption{Experiment Results: Minimization of Brockett cost. The subscript $-k$ indicates a scale of $10^{-k}$. For the meaning of \textbf{grad} and \textbf{t}, see \cref{eqn:setting}}
\label{tab:Brockett}
\end{table}

\subsection{Karcher Mean of Positive Definite Matrices}
Given $m$ positive definite matrices $A_1,\ldots,A_m\in \R^{n \times n}$, the problem of finding the Karcher mean \cite{karcher1977riemannian} is to determine the optimal solution for the following optimization problem:

\begin{equation*}
	\min_{X\in\mathbb{S}^n_{++}} \sum_{k=1}^m {\rm Dist}(X, A_k),
\end{equation*}
where $\mathbb{S}^n_{++}$ denotes the set of all $n\times n$ positive definite matrices and $\operatorname{Dist}(X,Y) = \|\log(XY^{-1})\|_F$ for $X,Y\in\mathbb{S}^n_{++}$. In fact, $\operatorname{Dist}(X,Y)$ represents the Riemannian distance between $X$ and $Y$ of the manifold $\mathbb{S}^n_{++}$.

In our experiments, we generated $m$ positive definite matrices randomly and chose $A_1$ as the initial point for all algorithms. The results are presented in \cref{tab:Karcher} and \cref{fig:Karcher}. All four algorithms exhibit good convergence properties, and although RRAM is the slowest among them, our RAM algorithm outperforms all others.

\begin{figure}[htbp]
	\centering
	\includegraphics[width=0.48\textwidth]{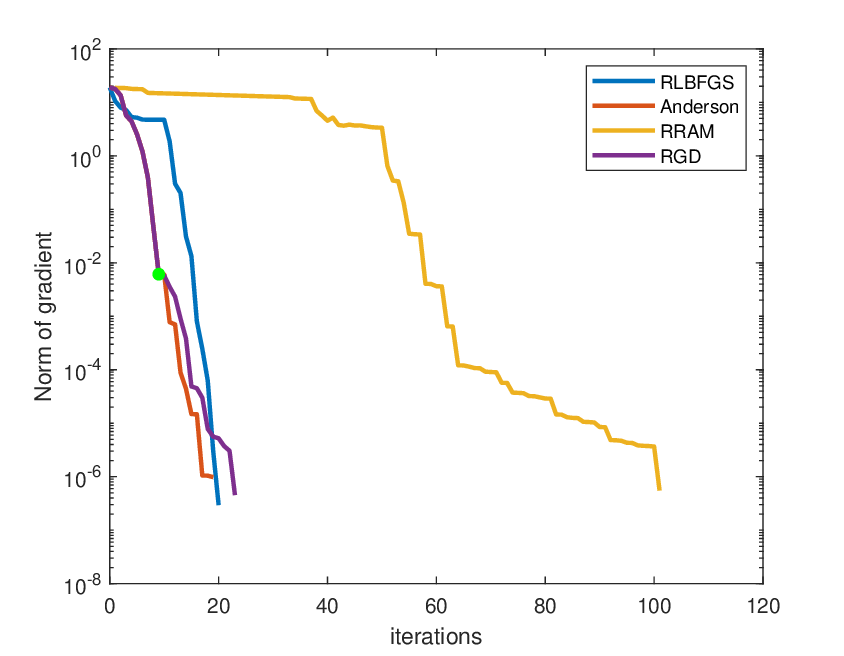}
	\includegraphics[width=0.48\textwidth]{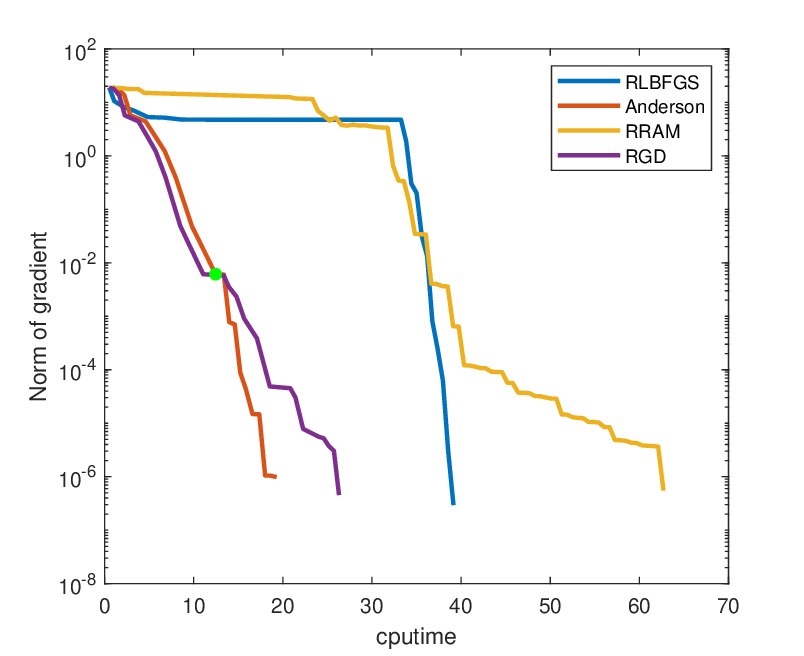}
	\caption{Karcher mean, $n=100, m=20$. The green point represents the moment when the warm-start of RAM ends.}
	\label{fig:Karcher}
\end{figure}
\begin{table}[htbp]
\centering
\begin{tabular}{|clc|c|c|c|c|c|}
\hline
\multicolumn{3}{|c|}{(n,m)}                                         & (100,20) & (200,10) & (500,5) & (800,3) & (1000,2) \\ \hline
\multicolumn{2}{|c|}{\multirow{3}{*}{RAM}} & \bgrad       & $4.65_{-7}$    & $5.87_{-7}$    & $4.93_{-7}$   & $6.64_{-7}$   & $9.25_{-8}$    \\ 
\multicolumn{2}{|c|}{}                          & \Bt              & 24.62       & 49.03       & 126.43     & 206.60     & 102.58      \\ 
 \hline
\multicolumn{2}{|c|}{\multirow{3}{*}{RRAM}}     & \bgrad      & $4.06_{-7}$    & $5.51_{-7}$    & $5.29_{-7}$   & $6.04_{-7}$   & $4.01_{-7}$    \\  
\multicolumn{2}{|c|}{}                          & \Bt             & 88.29       & 125.23      & 737.96     & 740.18     & 408.18      \\  
 \hline
\multicolumn{2}{|c|}{\multirow{3}{*}{RLBFGS}}   & \bgrad       & $1.61_{-7}$    & $3.74_{-7}$    & $4.32_{-7}$   & $3.98_{-7}$   & $1.55_{-7}$    \\ 
\multicolumn{2}{|c|}{}                          & \Bt            & 52.78       & 80.50       & 468.87     & 736.52     & 332.15      \\  
 \hline
\multicolumn{2}{|c|}{\multirow{3}{*}{RGD}}      & \bgrad       & $1.55_{-7}$    & $5.13_{-7}$    & $2.93_{-7}$   & $4.82_{-7}$   & $5.17_{-7}$    \\ 
\multicolumn{2}{|c|}{}                          & \Bt            & 31.92       & 52.73       & 138.94     & 223.42     & 171.51      \\  
 \hline
\end{tabular}
\caption{Experiment Results: Karcher Means of PSD Matrices. The subscript $-k$ indicates a scale of $10^{-k}$. For the meaning of \textbf{grad} and \textbf{t}, see \cref{eqn:setting}}
\label{tab:Karcher}
\end{table}

\subsection{Low-Rank Matrix Completion}
The original low-rank matrix completion problem seeks to find a matrix $X$ of rank $k$ that best approximates a given matrix $A$ with missing entries in $\Omega$. Directly solving the rank optimization problem is difficult, so a relaxation problem is typically used instead. One such relaxation problem is defined as follows \cite{vandereycken2013low}:
$$
\min _{X \in \mathbb{R}^{n \times m}}\left\|\mathbf{P}_{\Omega}(X-A)\right\|_F^2 \text { s.t. } \operatorname{rank}(X)=k,
$$
where $\mathbf{P}_{\Omega}$ is an orthogonal projection satisfying $\mathbf{P}_{\Omega}(X)_{ij} = X_{ij}$ if $(i,j)\in\Omega$ and 0 otherwise, and $\operatorname{rank}(A) = k$. This is an optimization problem on the fixed-rank matrix manifold, which has several Riemannian manifold structures. In this work, we employ the Riemannian structure proposed in \cite{vandereycken2013low}, which was implemented in the \texttt{fixedrankembeddedfactory} of the Manopt toolbox. To generate test data, we first randomly generated $L\in\R^{m\times k}$ and $R\in\R^{k\times n}$ from the standard normal distribution and set $A = LR$. We chose a scaling factor $\tau = \frac{3k(m+n-k)}{mn}$ to determine the set of observed entries $\Omega$, where an $m\times n$ matrix $C$ was constructed using the Matlab function \texttt{randn} and $\Omega$ was the set of all $(i,j)$ such that $C_{i j} < \tau$. For convenience, we set $m=n$ for all our experiments. Experimental results are presented in \cref{tab:Matrix_Completion} and \cref{fig:MatrixCompletion}. Our proposed algorithm, RAM, outperforms other algorithms in terms of both accuracy and computational efficiency.

\begin{figure}[htbp]
	\centering
	\includegraphics[width=0.48\textwidth]{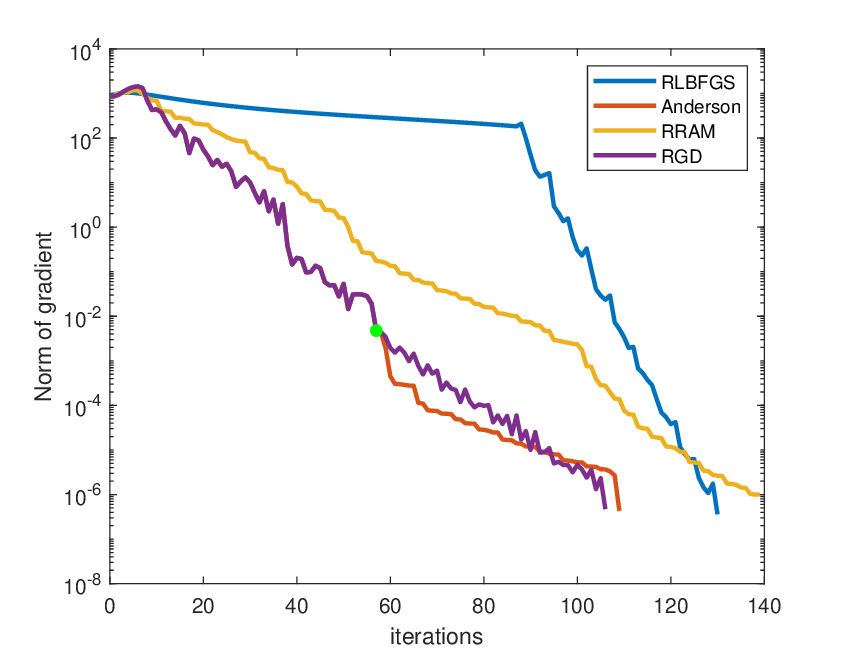}
	\includegraphics[width=0.48\textwidth]{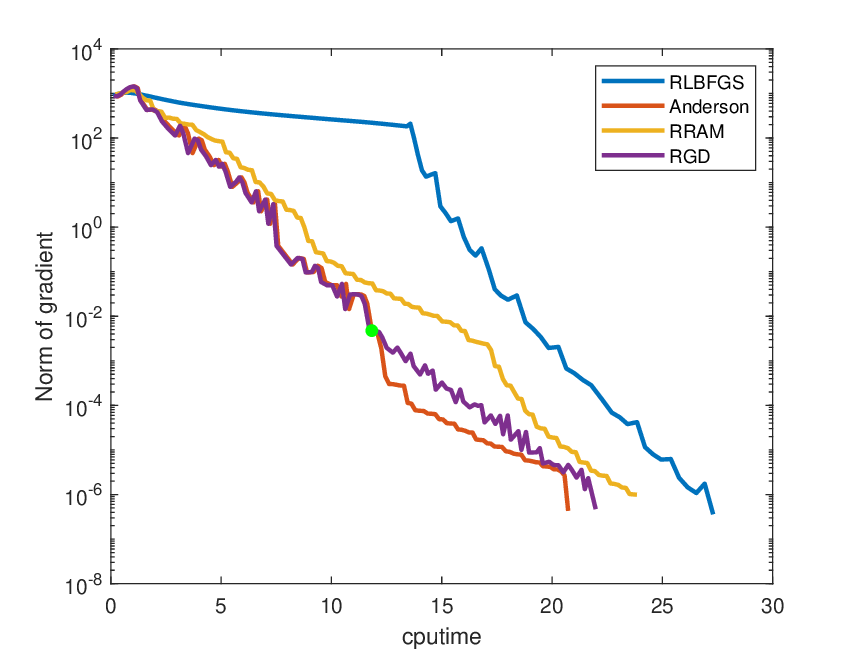}
	\caption{Matrix Completion, $n=5000, k=20$. The green point represents the moment when the warm-start of RAM ends.}
	\label{fig:MatrixCompletion}
\end{figure}
\begin{table}[htbp]
\centering
\begin{tabular}{|clc|c|c|c|c|c|}
\hline
\multicolumn{3}{|c|}{(n,k)}                                         & (5000,20) & (10000,20) & (2000,40) & (5000,40) & (10000,40) \\ \hline
\multicolumn{2}{|c|}{\multirow{3}{*}{RAM}} & \bgrad         & $8.14_{-7}$     & $8.39_{-7}$      & $8.51_{-7}$     & $6.78_{-7}$     & $7.19_{-7}$      \\ 
\multicolumn{2}{|c|}{}                          & \Bt                 & 24.51        & 73.35         & 4.42         & 23.50        & 72.14         \\  
 \hline
\multicolumn{2}{|c|}{\multirow{3}{*}{RRAM}}     & \bgrad         & $8.09_{-7}$     & $8.47_{-7}$      & $8.41_{-7}$     & $8.94_{-7}$     & $7.90_{-7}$      \\  
\multicolumn{2}{|c|}{}                          & \Bt              & 35.40        & 118.34        & 6.55         & 29.64        & 93.17         \\  
 \hline
\multicolumn{2}{|c|}{\multirow{3}{*}{RLBFGS}}   & \bgrad         & $6.43_{-7}$     & $5.52_{-7}$      & $6.28_{-7}$     & $6.90_{-7}$     & $6.93_{-7}$      \\  
\multicolumn{2}{|c|}{}                          & \Bt               & 39.07        & 261.60        & 8.54         & 33.91        & 172.85        \\ 
 \hline
\multicolumn{2}{|c|}{\multirow{3}{*}{RGD}}      & \bgrad       & $7.22_{-7}$     & $7.37_{-7}$      & $6.72_{-7}$     & $8.26_{-7}$     & $7.87_{-7}$      \\  
\multicolumn{2}{|c|}{}                          & \Bt               & 28.79        & 79.96         & 3.80         & 26.12        & 74.21         \\  
 \hline
\end{tabular}
\caption{Experiment Results: Matrix Completion. The subscript $-k$ indicates a scale of $10^{-k}$. For the meaning of \textbf{grad} and \textbf{t}, see \cref{eqn:setting}}
\label{tab:Matrix_Completion}
\end{table}

\section{Conclusion and Future Work}\label{sec:conclusion}
In this paper, we propose a Riemannian Anderson Mixing (RAM) method and its safeguarded variant, the Regularized Riemannian Anderson Mixing (RRAM) method, for solving general smooth manifold optimization problems. We establish local and global convergence analysis for RAM and RRAM, respectively. In addition, we perform numerical experiments on various manifold optimization problems, demonstrating that our algorithms outperform many classic algorithms and can be applied to a wide range of problems. 


\section*{Declarations}
\begin{itemize}
\item Funding: This work is supported by the National Key R$\&$D Program of China (No. 2021YFA1001300), National Natural
Science Foundation of China (No.12271291).
\end{itemize}

\begin{appendices}

\section{Proofs of the Lemmas and Propositions}

Here we give detailed proofs and explanations missing in the main article. 


\subsection{Manifold \texorpdfstring{$\cS^{n-1}$}{} Satisfies \texorpdfstring{\cref{asmp:retr}}{} }
Choose $\cU = \cM = \cS^{n-1}$ and set the retraction to be $\cR_x(v) = \frac{x+v}{\|x+v\|_2}$. The following proposition shows that \cref{asmp:retr} actually holds in the case of sphere. 
\begin{prop}
Given any $r>0$, there exists a constant $c$ only relevant to $r$, such that for all $x\in\cM$ and $v_x\in\TM{x}$ with $\|v_x\| < r$, we have 
\begin{equation}\label{eqn:retr}
    \dist(x,\cR_x(v_x)) \geq c\|v_x\|.
\end{equation}
\end{prop}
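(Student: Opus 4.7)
The plan is to compute everything explicitly and reduce the inequality to a one-variable statement about $\arctan$. Since $v_x \in T_x\mathcal{S}^{n-1}$ means $\langle x, v_x\rangle = 0$ and $\|x\|_2=1$, I can first show that
\[
\|x+v_x\|_2 = \sqrt{1+\|v_x\|^2}.
\]
Therefore $\mathcal{R}_x(v_x) = (x+v_x)/\sqrt{1+\|v_x\|^2}$ is a unit vector, and
\[
\langle x, \mathcal{R}_x(v_x)\rangle = \frac{1}{\sqrt{1+\|v_x\|^2}}.
\]

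Next, I would invoke the well-known fact that the geodesic distance on $\mathcal{S}^{n-1}$ is $\dist(x,y) = \arccos\langle x,y\rangle$ for $x,y\in\mathcal{S}^{n-1}$. Writing $t = \|v_x\|$, the previous display gives
\[
\dist(x,\mathcal{R}_x(v_x)) = \arccos\!\left(\frac{1}{\sqrt{1+t^2}}\right) = \arctan(t),
\]
where the last equality follows from the identity $\cos(\arctan t) = 1/\sqrt{1+t^2}$ for $t\geq 0$. Thus \eqref{eqn:retr} reduces to the scalar inequality $\arctan(t) \geq c\, t$ for $0 \leq t < r$.

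To finish, I would observe that the function $h(t) := \arctan(t)/t$ (extended continuously by $h(0)=1$) is strictly decreasing on $[0,\infty)$, since $h'(t) = \frac{t/(1+t^2) - \arctan t}{t^2} < 0$ for $t>0$ (the numerator vanishes at $t=0$ and has negative derivative $-2t^2/(1+t^2)^2$). Hence on $[0,r)$ we have $h(t) \geq h(r) = \arctan(r)/r$, and choosing
\[
c := \frac{\arctan(r)}{r}
\]
yields the claim. There is no real obstacle here; the only subtlety is checking the domain restriction $t<r$ to ensure the constant $c$ is strictly positive, which is immediate since $r>0$ gives $\arctan(r)>0$.
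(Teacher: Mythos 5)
Your proposal is correct and follows essentially the same route as the paper's proof: exploit $x^\top v_x=0$ to get $\dist(x,\cR_x(v_x))=\arctan(\|v_x\|_2)$ and then bound $\arctan(t)/t$ from below on $[0,r)$. The only cosmetic difference is that you make the constant explicit as $c=\arctan(r)/r$ via the monotonicity of $t\mapsto\arctan(t)/t$, whereas the paper simply takes the minimum of this continuous positive function over the compact interval $[0,r]$ (which, by your monotonicity observation, is the same number).
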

\begin{proof}
    Since $x\in\cS^{n-1}$ and $v_x\in\TM{x}$, we know that $x^\top x = 1$ and $x^\top v_x = 0$. Therefore we have $\|x+v_x\|_2^2 = (x+v_x)^\top(x+v_x) = x^\top x+v_x^\top v_x = 1+\|v_x\|_2^2$. So $\cR_x(v_x) = \frac{x+v_x}{\|x+v_x\|_2} = \frac{x+v_x}{\sqrt{1+\|v_x\|_2^2}}$ and $x^\top\cR_x(v_x) = \frac{x^\top(x+v_x)}{\sqrt{1+\|v_x\|_2^2}} = \frac{1}{\sqrt{1+\|v_x\|_2^2}}$. Hence 
    $$
    \dist(x,\cR_x(v_x)) = \arccos\left( \frac{1}{\sqrt{1+\|v_x\|_2^2}}\right) = \arctan(\|v_x\|_2).
    $$
    Let $f(y) = \frac{\arctan y}{y}$ when $y>0$ and $f(0) = 1$. We have thus defined a continuous strictly positive function $f$ over $[0,\infty)$. Given any $r>0$, let $c = \min_{y\in [0,r]}f(y)$ and $c>0$. Notice that $\|v_x\|_2 = \|v_x\|$. So for any $v_x$ with $\|v_x\| < r$, \cref{eqn:retr} holds.
\end{proof}

\subsection{Proof of \texorpdfstring{\cref{lem:para_vect_error}}{}}
\label{SM:proof_lem_para_vect_error}
\begin{proof}
Let $\xi_x = \Exp^{-1}_x y$ and $\eta_x = \cR^{-1}_x y$. We can therefore denote $\cP_x^{y}$ by $\cP_{\xi_x}$ and denote $\cT_x^{y}$ by $\cT_{\eta_x}$. The compact set $\cU$ can be covered by finite charts $(\varphi_1,\cU_1),\ldots,(\varphi_m,\cU_m)$. Let $L^1_{\cP}(\hat{x}, \hat{\xi})$ and $L^1_{\cT}(\hat{x},\hat{\eta})$ denote the coordinate form of $\cP_{\xi_x}$ and $\cT_{\eta_x}$ under the chart $(\varphi_1,\cU_1)$ respectively. Since all norms are equivalent in a finite dimensional space and $\cU$ is compact, there exist constants $0<b_1<b_2$ such that for all $x\in\cU, \zeta_x\in\TM{x}$, we have $b_1\|\zeta_x\| \leq \|\hat{\zeta}_x\|_2 \leq b_2\|\zeta_x\|$. 
Now for all $x\in\cU_1$, it follows that
\[
\begin{aligned}
  &\|\cP_x^y v_x-\cT_x^y v_x\| \\
  \leq& \frac{1}{b_1}\|(L^1_{\cP}(\hat{x},\hat{\xi})-L^1_{\cT}(\hat{x},\hat{\eta}))\hat{v}\|_2 \\
  \leq& \frac{1}{b_1}\|\hat{v}\|_2\left(\|(L^1_{\cP}(\hat{x},\hat{\xi})-L^1_{\cP}(\hat{x},0)\|_2+\|L^1_{\cT}(\hat{x},0)-L^1_{\cT}(\hat{x},\hat{\eta}))\|_2\right) \\
  \leq& \frac{c_1}{b_1}\|\hat{v}\|_2\left(\|\hat{\xi}\|_2+\|\hat{\eta}\|_2\right) 
  \leq \frac{c_1 b_2^2}{b_1}\|v_x\|\left(\|\xi_x\|+\|\eta_x\|\right) \\
  \leq& (1+\frac{1}{\Tilde{C}})\frac{c_1 b_2^2}{b_1}\dist(x,y)\|v_x\| 
  := \rho_1\dist(x,y)\|v_x\|,
\end{aligned}
\]
where $c_1$ is a positive constant related to $\cU_1$,  whose existence is guaranteed by the smoothness of $L^1_{\cP}$ and $L^1_{\cT}$. The second inequality holds because $L^1_{\cT}(\hat{x},0) = L^1_{\cP}(\hat{x},0)$ and \cref{asmp:retr} implies the last inequality. 
Finally, let $\rho = \max \{\rho_1,\ldots,\rho_m\}$ and we complete the proof.
\end{proof}

\subsection{Proof of \texorpdfstring{\cref{coro:multi_para_vect_err}}{} }
\label{SM:proof_coro_multi_para_vect_err}
\begin{proof}
We prove this by induction. The case when $n = 2$ is exactly  \cref{lem:para_vect_error}. Assume the proposition holds for the case of $n-1$. Consider the case of $n$. Notice that the parallel translation is isometric. Then by the induction hypothesis, we have:
\[
\begin{aligned}
   &\|\cP_{x_{n-1}}^{x_{n}}\cP_{x_{n-2}}^{x_{n-1}}\cdots\cP_{x_{1}}^{x_{2}}\ v-\cT_{x_{n-1}}^{x_{n}}\cT_{x_{n-2}}^{x_{n-1}}\cdots\cT_{x_{1}}^{x_{2}}\ v\| \\
   \leq&\|\cP_{x_{n-1}}^{x_{n}}\cP_{x_{n-2}}^{x_{n-1}}\cdots\cP_{x_{1}}^{x_{2}}\ v-\cP_{x_{n-1}}^{x_{n}}\cT_{x_{n-2}}^{x_{n-1}}\cdots\cT_{x_{1}}^{x_{2}}\ v\|+ \|\cP_{x_{n-1}}^{x_{n}}\cT_{x_{n-2}}^{x_{n-1}}\cdots\cT_{x_{1}}^{x_{2}}\ v-\cT_{x_{n-1}}^{x_{n}}\cT_{x_{n-2}}^{x_{n-1}}\cdots\cT_{x_{1}}^{x_{2}}\ v\| \\
   \leq&\|\cP_{x_{n-2}}^{x_{n-1}}\cdots\cP_{x_{1}}^{x_{2}}\ v-\cT_{x_{n-2}}^{x_{n-1}}\cdots\cT_{x_{1}}^{x_{2}}\  v\| +\rho\dist(x_{n-1},x_n)\|\cT_{x_{n-2}}^{x_{n-1}}\cdots\cT_{x_{1}}^{x_{2}}\  v\| \\
   \leq&\rho\|v\|\sum_{i=1}^{n-2}M_{\cU}^{i-1}\dist(x_i,x_{i+1})+\rho M_{\cU}^{n-2}\|v\|\dist(x_{n-1},x_{n}) 
   =\rho\|v\|\sum_{i=1}^{n-1}M_{\cU}^{i-1}\dist(x_i,x_{i+1}).
\end{aligned}
\]

\end{proof}

\subsection{Proof of \texorpdfstring{\cref{coro:multi_para}}{}}
\label{SM:proof_coro_multi_para}
\begin{proof}
We prove this by induction. The case when $n = 3$ is exactly \cref{lem:2parallel}. Assume the proposition holds for the case of $n$. Consider the case of $n+1$. Notice that parallel translation is isometric. Then by \cref{lem:2parallel} and induction hypothesis, we have:
$$
\begin{aligned}
   &\left\| \cP_{x_{1}}^{x_{n+1}}\ \xi - \cP_{x_{n}}^{x_{n+1}}\cP_{x_{n-1}}^{x_{n}}\cdots\cP_{x_{1}}^{x_{2}}\ \xi \right\| \\
   \leq& \left\| \cP_{x_{1}}^{x_{n+1}}\ \xi - \cP_{x_{n}}^{x_{n+1}}\cP_{x_{1}}^{x_{n}}\ \xi \right\|+ \left\| \cP_{x_{n}}^{x_{n+1}}\cP_{x_{1}}^{x_{n}}\ \xi - \cP_{x_{n}}^{x_{n+1}}\cP_{x_{n-1}}^{x_{n}}\cdots\cP_{x_{1}}^{x_{2}}\ \xi \right\| \\
   \leq& \rho_1\|\xi\|\dist(x_n,x_{n+1})\dist(x_1,x_n) + \left\| \cP_{x_{1}}^{x_{n}}\ \xi - \cP_{x_{n-1}}^{x_{n}}\cP_{x_{n-2}}^{x_{n-1}}\cdots\cP_{x_{1}}^{x_{2}}\ \xi \right\| \\
   \leq&\rho_1\|\xi\|\sum_{i=2}^{n}\dist(x_i,x_{i+1})\dist(x_1,x_i),
\end{aligned}
$$
which ends the proof.
\end{proof}

\subsection{Proof of \texorpdfstring{\cref{lem:para_jacobian}}{} }\label{SM:proof_para_jacobian}
\begin{proof}
    We first have:
    \[
    \begin{aligned}
        &\norm{\cP_{y}^x\cP_{z}^y H(z)\cP_{y}^z\cP_{x}^y - \cP_{z}^x H(z) \cP_{x}^z} \\
        \leq\ &\norm{\cP_{y}^x\cP_{z}^y H(z)\cP_{y}^z\cP_{x}^y - \cP_{z}^x H(z) \cP_{y}^z\cP_{x}^y} + \norm{\cP_{z}^x H(z)\cP_{y}^z\cP_{x}^y - \cP_{z}^x H(z) \cP_{x}^z}.
    \end{aligned}
    \]
    For all $v_x\in\TM{x}$, set $w_z := H(z)\cP_{y}^z\cP_{x}^y\ v_x \in\TM{z}$. Thus from \cref{lem:2parallel} we obtain:
    \[
    \begin{aligned}
        &\norm{\left( \cP_{y}^x\cP_{z}^y H(z)\cP_{y}^z\cP_{x}^y - \cP_{z}^x H(z) \cP_{y}^z\cP_{x}^y\right)v_x } \\
        \leq\ &\rho_1\dist(x,y)\dist(y,z)\norm{w_z} \\
        \leq\ &\rho_1\dist(x,y)\dist(y,z)\norm{H(z)}\norm{v_x}.
    \end{aligned}
    \]
    The definition of the linear operator norm $\norm{H(z)}$ yields:
    \[
    \norm{\cP_{y}^x\cP_{z}^y H(z)\cP_{y}^z\cP_{x}^y - \cP_{z}^x H(z) \cP_{y}^z\cP_{x}^y} \leq \rho_1\dist(x,y)\dist(y,z)\norm{H(z)}.
    \]
    On the other hand, $\forall v_x\in\TM{x}$, the following inequality holds:
    \[
    \begin{aligned}
        &\norm{\left( \cP_{z}^x H(z)\cP_{y}^z\cP_{x}^y - \cP_{z}^x H(z) \cP_{x}^z\right)\ v_x} \\
        \leq\ &\norm{H(z)}\norm{\cP_{y}^z\cP_{x}^y\ v_x- \cP_{x}^z\ v_x} \\
        \leq\ &\rho_1\dist(x,y)\dist(y,z)\norm{H(z)}\norm{v_x}.
    \end{aligned}
    \]
    As a consequence, 
    \[
    \norm{\cP_{z}^x H(z)\cP_{y}^z\cP_{x}^y - \cP_{z}^x H(z) \cP_{x}^z} \leq \rho_1\dist(x,y)\dist(y,z)\norm{H(z)}.
    \]
    Hence we have
    \[
    \norm{\cP_{y}^x\cP_{z}^y H(z)\cP_{y}^z\cP_{x}^y - \cP_{z}^x H(z) \cP_{x}^z} \leq 2\rho_1\dist(x,y)\dist(y,z)\norm{H(z)}.
    \]
\end{proof}

\subsection{Proof of \texorpdfstring{\cref{lem:err_estimate}}{} }
\label{SM:proof_lem_err_estimate}
\begin{proof}
Notice that for $1\leq j\leq k$, 
\[
\dx{k-j}{k} =\cT_{x_{k-1}}^{x_k}\cdots\cT_{x_{k-j}}^{x_{k-j+1}}\cR^{-1}_{x_{k-j}}(x_{k-j+1}).
\]
Since $\|\cR^{-1}_{x_{k-j}}(x_{k-j+1})\| < \tilr, 1 \leq j \leq k-1$, by \cref{asmp:unibound_vt}, \cref{asmp:retr} and \cref{asmp:F_lip} we have
\[
\begin{aligned}
\|\dx{k-j}{k}\| &\leq M_{\cU}^j\|\cR^{-1}_{x_{k-j}}(x_{k-j+1})\| \leq \frac{M_{\cU}^j}{\Tilde{C}}\dist(x_{k-j},x_{k-j+1}) \\
&\leq \frac{M_{\cU}^j}{L_1\Tilde{C}}\|r_{k-j+1}-\cP_{x_{k-j}}^{x_{k-j+1}}\ r_{k-j}\| \leq \frac{M_{\cU}^j}{L_1\Tilde{C}}\left(\|r_{k-j+1}\|+\|r_{k-j}\|\right)\\.
\end{aligned}
\]
Hence for $1\leq i\leq k$, the following inequality holds:
\[
\begin{aligned}
\left\|\sum_{j=1}^i\dx{k-j}{k}\right\| &\leq \sum_{j=1}^i\|\dx{k-j}{k}\| \leq \sum_{j=1}^{i} \frac{M_{\cU}^j}{L_1\Tilde{C}}\left(\|r_{k-j+1}\|+\|r_{k-j}\|\right) \\
&= \frac{M_{\cU}}{L_1\Tilde{C}}\|r_{k}\|+\sum_{j=2}^{i} \frac{M_{\cU}^{j-1}(M_{\cU} +1)}{L_1\Tilde{C}}\|r_{k-j+1}\| + \frac{M_{\cU}^i}{L_1\Tilde{C}}\|r_{k-i}\|.
\end{aligned}
\]
Set $M_1^\prime = \max\{ \frac{M_{\cU}^{m-1}(M_{\cU}+1)}{L_1\Tilde{C}},\frac{M_{\cU}(M_{\cU}+1)}{L_1\Tilde{C}} \}$. By \cref{asmp:bounded_least_square}, we have
\[
\begin{aligned}
 \|X_k\Gamma_k\|+\|r_k\| &\leq \|\Gamma_k\|_{\infty}\sum_{j=1}^{k}\|\dx{k-j}{k}\|+\|r_k\| \\
 &\leq M_{\Gamma}\sum_{j=1}^{k} \frac{M_{\cU}^j}{L_1\Tilde{C}}\left(\|r_{k-j+1}\|+\|r_{k-j}\|\right)+\|r_k\| \\
 &= \left(\frac{M_{\cU}M_{\Gamma}}{L_1\Tilde{C}}+1\right)\|r_{k}\|+\sum_{j=2}^{k} \frac{M_{\Gamma}M_{\cU}^{j-1}(M_{\cU}+1)}{L_1\Tilde{C}}\|r_{k-j+1}\| + \frac{M_{\cU}^kM_{\Gamma}}{L_1\Tilde{C}}\|r_{0}\|.
\end{aligned}
\]
Similarly, for $1\leq i\leq k$, we have
\[
\begin{aligned}
 \sum_{j=1}^i |\gamma_j^k| \|\dx{k-j}{k}\| &\leq \|\Gamma_k\|_{\infty}\sum_{j=1}^{i}\|\dx{k-j}{k}\| \\
 &\leq \frac{M_{\cU}M_{\Gamma}}{L_1\Tilde{C}}\|r_{k}\|+\sum_{j=2}^{i} \frac{M_{\Gamma}M_{\cU}^{j-1}(M_{\cU}+1)}{L_1\Tilde{C}}\|r_{k-j+1}\| + \frac{M_{\cU}^i M_{\Gamma}}{L_1\Tilde{C}}\|r_{k-i}\|.
\end{aligned}
\]
Notice that $k \leq m$. Set $M_1 = \max\set{\frac{M_{\cU}M_{\Gamma}}{L_1\Tilde{C}}+1, M_{\Gamma}M_1^\prime, M_1^\prime}$ and we complete the proof.
\end{proof}

\subsection{Proof of \texorpdfstring{\cref{coro:dist_control}}{}}\label{SM:proof_coro_dist_control}
\begin{proof}
    First of all, for any $1\leq i,j \leq k$, $\dist(x_i,x_j) \leq \dist(x_i,x^*)+\dist(x^*,x_j) < 2r^\prime < \frac{\tilr}{2}$. Also by \cref{asmp:F_lip} we have $\|r_i\| \leq L_2\dist(x_i,x^*) < L_2 r^\prime$, since $\dist(x_i,x^*) < r^\prime < 2\tilr$. So \cref{lem:err_estimate} yields
    \[
    \|\eta\| \leq \sum_{j=1}^i |\gamma_j^k| \|\dx{k-j}{k}\| + \|v\| \leq M_1\sum_{j=0}^i\|r_{k-j}\| + \|r_k\| \leq (mM_1+1)L_2 r^\prime < \frac{\tilr}{2}.
    \]
    Similarly $\|\xi\| < \frac{\tilr}{2}$. In particular, $\dist(y,x_k) = \|\eta\| < \frac{\tilr}{2}$ and $\dist(z,x_k) = \|\xi\| < \frac{\tilr}{2}$. Above all, we have $x_1,\ldots,x_{k-1},y,z\in\cB_{\cM}(x_k,\frac{\tilr}{2})$.
\end{proof}

\subsection{Proof of \texorpdfstring{\cref{prop:multi_step_err}}{} }
\label{SM:proof_prop_multi_step_err}
\begin{proof}
By \cref{coro:dist_control} we know that for any $1\leq i,j \leq k, \dist(x_i,x_j) \leq \dist(x_i,x_k)+ \dist(x_k,x_j) < \tilr$. So $\Exp_{x_i}^{-1}(x_j)$ is well-defined with norm smaller than $\tilr$. We first have
\begin{equation}\label{eqn:prop1_1}
    \begin{aligned}
     \left\| \sum_{j=1}^i\dx{k-j}{k}+\Exp_{x_k}^{-1}x_{k-i} \right\| \leq& \left\| \dx{k-1}{k}+\Exp_{x_k}^{-1}x_{k-1} \right\| 
     +\left\| \Exp_{x_k}^{-1}x_{k-i}-\Exp^{-1}_{x_k}x_{k-1}-\cP_{x_{k-1}}^{x_{k}}\Exp_{k-1}^{-1}x_{k-i} \right\| \\
     +&\left\| \sum_{j=2}^i\dx{k-j}{k}+\cP_{x_{k-1}}^{x_{k}}\Exp_{x_{k-1}}^{-1}x_{k-i} \right\|.
    \end{aligned}
\end{equation}
Notice that $\dist(x_i,x_{i+1}) < \tilr, \|\cR^{-1}_{x_{i}}(x_{i+1})\| < \tilr, 1\leq i\leq k$. \cref{lem:coslaw}, \cref{lem:para_vect_error},  \cref{lem:basic}, \cref{asmp:retr} and \cref{asmp:F_lip} yield:
\begin{equation}\label{eqn:prop1_2}
    \begin{aligned}
     &\left\| \dx{k-1}{k}+\Exp_{x_k}^{-1}x_{k-1} \right\| 
     = \left\| \cP_{x_{k-1}}^{x_{k}}\Exp_{x_{k-1}}^{-1}x_{k}-\cT_{x_{k-1}}^{x_{k}}\cR^{-1}_{x_{k-1}}x_k \right\| \\
     \leq& \left\|\Exp_{x_{k-1}}^{-1}x_{k}-\cR^{-1}_{x_{k-1}}x_k \right\|+\left\| \cP_{x_{k-1}}^{x_{k}}\cR_{x_{k-1}}^{-1}x_{k}-\cT_{x_{k-1}}^{x_{k}}\cR^{-1}_{x_{k-1}}x_k \right\| \\
     \leq& \dist\left(x_k,\Exp_{x_{k-1}}(\cR^{-1}_{x_{k-1}}x_k)\right) + \sqrt{K}\dist(x_k,x_{k-1})\|\cR^{-1}_{x_{k-1}}x_k\| +\rho\dist(x_k,x_{k-1})\|\cR^{-1}_{x_{k-1}}x_k\| \\
     \leq& \left( \frac{C}{\Tilde{C}^2}+\frac{\sqrt{K}}{\Tilde{C}}+\frac{\rho}{\Tilde{C}} \right)\dist(x_{k-1},x_k)^2 
     \leq \frac{1}{\Tilde{C}L_1}\left( \frac{C}{\Tilde{C}}+\sqrt{K}+\rho \right)(\|r_{k-1}\|+\|r_k\|)^2  \\
     \leq& \frac{2}{\Tilde{C}L_1}\left( \frac{C}{\Tilde{C}}+\sqrt{K}+\rho \right)\sum_{j=0}^i\|r_{k-j}\|^2.
    \end{aligned}
\end{equation}
Note that by \cref{coro:dist_control} $\|\Exp_{x_k}^{-1}x_{k-i}-\Exp^{-1}_{x_k}x_{k-1}\| \leq \|\Exp_{x_k}^{-1}x_{k-i}\|+\|\Exp^{-1}_{x_k}x_{k-1}\| < 
 \tilr$. According to \cref{lem:triangle_rule} and \cref{lem:coslaw}, we obtain
\begin{equation}
    \begin{aligned}
     &\left\| \Exp_{x_k}^{-1}x_{k-i}-\Exp^{-1}_{x_k}x_{k-1}-\cP_{x_{k-1}}^{x_{k}}\Exp_{x_{k-1}}^{-1}x_{k-i} \right\|
     = \left\| \cP_{x_{k}}^{x_{k-1}}(\Exp_{x_k}^{-1}x_{k-i}-\Exp^{-1}_{x_k}x_{k-1})-\Exp_{x_{k-1}}^{-1}x_{k-i} \right\| \\
     \leq& \dist\left( x_{k-i},\Exp_{x_{k-1}}\left(\cP_{x_{k}}^{x_{k-1}}(\Exp_{x_k}^{-1}x_{k-i}-\Exp^{-1}_{x_k}x_{k-1})\right) \right) +\sqrt{K}\|\Exp_{x_k}^{-1}x_{k-i}-\Exp^{-1}_{x_k}x_{k-1}\| \|\Exp_{x_{k-1}}^{-1}x_{k-i}\| \\
     \leq& c_0\min\{ \|\Exp^{-1}_{x_k}x_{k-1}\|,\|\Exp_{x_k}^{-1}x_{k-i}-\Exp^{-1}_{x_k}x_{k-1}\| \}\left( \|\Exp^{-1}_{x_k}x_{k-1}\|+ \|\Exp_{x_k}^{-1}x_{k-i}-\Exp^{-1}_{x_k}x_{k-1}\|\right)^2 \\
     &+ \sqrt{K}\|\Exp_{x_k}^{-1}x_{k-i}-\Exp^{-1}_{x_k}x_{k-1}\| \|\Exp_{x_{k-1}}^{-1}x_{k-i}\|.
    \end{aligned}
\end{equation}
On the other hand ,by \cref{asmp:F_lip}, we have 
\begin{equation}\label{eqn:prop1_3}
    \begin{aligned}
        &\left\| \Exp_{x_k}^{-1}x_{k-i}-\Exp^{-1}_{x_k}x_{k-1}-\cP_{x_{k-1}}^{x_{k}}\Exp_{x_{k-1}}^{-1}x_{k-i} \right\| \\ 
     \leq& c_0\min\{ \|\Exp^{-1}_{x_k}x_{k-1}\|,\|\Exp_{x_k}^{-1}x_{k-i}-\Exp^{-1}_{x_k}x_{k-1}\| \}\left( \|\Exp^{-1}_{x_k}x_{k-1}\|+ \|\Exp_{x_k}^{-1}x_{k-i}-\Exp^{-1}_{x_k}x_{k-1}\|\right)^2 \\
     &+ \sqrt{K}\|\Exp_{x_k}^{-1}x_{k-i}-\Exp^{-1}_{x_k}x_{k-1}\| \|\Exp_{x_{k-1}}^{-1}x_{k-i}\| \\
     \leq& \frac{c_0 \tilr}{L_1^2}(3\|r_k\|+2\|r_{k-1}\|+\|r_{k-i}\|)^2+\frac{\sqrt{K}}{L_1^2}(2\|r_k\|+\|r_{k-1}\|+\|r_{k-i}\|)(\|r_{k-1}\|+\|r_{k-i}\|) \\
     \leq& \frac{14 c_0 \tilr+3\sqrt{K}}{L_1^2}\sum_{j=0}^i\|r_{k-j}\|^2.
    \end{aligned}
\end{equation}
Combining \cref{lem:para_vect_error},  \cref{lem:err_estimate} and \cref{asmp:F_lip}, we have
\begin{equation}\label{eqn:prop1_4}
    \begin{aligned}
     &\left\| \sum_{j=2}^i\dx{k-j}{k}+\cP_{x_{k-1}}^{x_{k}}\Exp_{x_{k-1}}^{-1}x_{k-i} \right\| \\
     \leq&\left\| \cT_{x_{k-1}}^{x_{k}}\left( \sum_{j=2}^i\dx{k-j}{k-1} \right)-\cP_{x_{k-1}}^{x_{k}}\left( \sum_{j=2}^i\dx{k-j}{k-1} \right) \right\| 
     +\left\| \sum_{j=2}^i\dx{k-j}{k-1}+\Exp_{x_{k-1}}^{-1}x_{k-i} \right\| \\ 
     \leq&\rho\dist(x_{k-1},x_k)\left\| \sum_{j=2}^i\dx{k-j}{k-1} \right\|+\left\| \sum_{j=2}^i\dx{k-j}{k-1}+\Exp_{x_{k-1}}^{-1}x_{k-i} \right\| \\
     \leq& \frac{m\rho M_1}{L_1}\sum_{j=0}^i\|r_{k-j}\|^2+\left\| \sum_{j=2}^i\dx{k-j}{k-1}+\Exp_{x_{k-1}}^{-1}x_{k-i} \right\|.
    \end{aligned}
\end{equation}
Set $\Tilde{M}_2 = \frac{2}{\Tilde{C}L_1}\left( \frac{C}{\Tilde{C}}+\sqrt{K}+\rho \right)+\frac{14 c_0 \tilr+3\sqrt{K}+m\rho M_1 L_1}{L_1^2}$. Inequalities \cref{eqn:prop1_1}, \cref{eqn:prop1_2}, \cref{eqn:prop1_3} and \cref{eqn:prop1_4} yield:
\[
\left\| \sum_{j=1}^i\dx{k-j}{k}+\Exp_{x_k}^{-1}x_{k-i} \right\| \leq \Tilde{M}_2\sum_{j=0}^i\|r_{k-j}\|^2+\left\| \sum_{j=2}^i\dx{k-j}{k-1}+\Exp_{x_{k-1}}x_{k-i} \right\|.
\]
Inductively, we have
\[
\begin{aligned}
 \left\| \sum_{j=1}^i\dx{k-j}{k}+\Exp_{x_k}^{-1}x_{k-i} \right\| &\leq \Tilde{M}_2\sum_{j=0}^i\|r_{k-j}\|^2+\left\| \sum_{j=2}^i\dx{k-j}{k-1}+\Exp_{x_{k-1}}x_{k-i} \right\| \leq\cdots \\
 &\leq (i-1)\Tilde{M}_2\sum_{j=0}^i\|r_{k-j}\|^2+ \left\| \dx{k-i}{k-i+1}+\Exp_{x_{k-i+1}}^{-1}x_{k-i} \right\|.
\end{aligned}
\]
Similar to inequalities \cref{eqn:prop1_2}, we have
\[
\left\| \dx{k-i}{k-i+1}+\Exp_{x_{k-i+1}}^{-1}x_{k-i} \right\| \leq \frac{2}{\Tilde{C}L_1}\left( \frac{C}{\Tilde{C}}+\sqrt{K}+\rho \right)\sum_{j=0}^i\|r_{k-j}\|^2.
\]
Let $M_2 = m \Tilde{M}_2+\frac{2}{\Tilde{C}L_1}\left( \frac{C}{\Tilde{C}}+\sqrt{K}+\rho \right)$ and the proof is then ended.
\end{proof}

\subsection{Proof of \texorpdfstring{\cref{prop:2}}{} }
\label{SM:proof_prop_2}
\begin{proof}
Choose $\alpha = 1$ and $\beta = 0$ in \cref{coro:dist_control} and we obtain that $y_k^i \in\cB_{\cM}(x_k,\frac{\tilr}{2})$. So we further have $\dist(y_k^i,x_{k-i}) < \tilr$. Notice that
\begin{equation}\label{eqn:prop2_1}
    \begin{aligned}
     &\left\| \cP_{y_k^i}^{x_k}F(y_k^i)-\cT_{x_{k-1}}^{x_k}\cT_{x_{k-2}}^{x_{k-1}}\cdots\cT_{x_{k-i}}^{x_{k-i+1}}F(x_{k-i}) \right\| \\
     \leq& \left\| \cP_{y_k^i}^{x_k}F(y_k^i)-\cP_{x_{k-i}}^{x_{k}}F(x_{k-i}) \right\|
     +\left\| \cP_{x_{k-i}}^{x_{k}}F(x_{k-i})-\cP_{x_{k-1}}^{x_{k}}\cdots\cP_{x_{k-i}}^{x_{k-i+1}}F(x_{k-i}) \right\| \\
     +&\left\| \cP_{x_{k-1}}^{x_{k}}\cdots\cP_{x_{k-i}}^{x_{k-i+1}}F(x_{k-i})-\cT_{x_{k-1}}^{x_k}\cdots\cT_{x_{k-i}}^{x_{k-i+1}}F(x_{k-i}) \right\|
    \end{aligned}
\end{equation}
Firstly, we have 
\begin{equation}
    \begin{aligned}
     & \left\| \cP_{y_k^i}^{x_k}F(y_k^i)-\cP_{x_{k-i}}^{x_{k}}F(x_{k-i}) \right\| \\
     \leq& \left\| \cP_{y_k^i}^{x_k}F(y_k^i)-\cP_{y_k^i}^{x_k}\cP_{x_{k-i}}^{y_k^i}F(x_{k-i}) \right\| +\left\| \cP_{y_k^i}^{x_k}\cP_{x_{k-i}}^{y_k^i}F(x_{k-i})-\cP_{x_{k-i}}^{x_{k}}F(x_{k-i}) \right\| \\
     \leq& L_2\dist(x_{k-i},y_k^i)+\rho_1\|F(x_{k-i})\|\dist(x_k,y_k^i). 
    \end{aligned}
\end{equation}
But by \cref{lem:err_estimate}
\[
\|F(x_{k-i})\|\dist(x_k,y_k^i)=\|r_{k-i}\| \left\|\sum_{j=1}^i\dx{k-j}{k} \right\|\leq \frac{(m+1)M_1}{2}\sum_{j=0}^i\|r_{k-j}\|^2.
\]
Since $\dist(x_k,x_{k-i}), \dist(x_k,y_k^i) < \tilr/2$, according to \cref{prop:multi_step_err}, \cref{coro:dist_control} and \cref{lem:coslaw}, we have
\[
\begin{aligned}
 \dist(x_{k-i},y_k^i) \leq& \left\| -\sum_{j=1}^i\dx{k-j}{k}-\Exp_{x_k}^{-1}x_{k-i} \right\|+\sqrt{K}\left\| \sum_{j=1}^i\dx{k-j}{k} \right\|\dist(x_k,x_{k-i}) \\
 \leq& M_2\sum_{j=0}^i\|r_{k-i}\|^2+\frac{\sqrt{K}M_1}{L_1}(\|r_k\|+\|r_{k-i}\|)\sum_{j=0}^i\|r_{k-j}\| \\
 \leq& (M_2+\frac{m\sqrt{K}M_1}{L_1})\sum_{j=0}^i \|r_{k-j}\|^2.
\end{aligned}
\]
Consequently,
\begin{equation}\label{eqn:prop2_2}
   \left\| \cP_{y_k^i}^{x_k}F(y_k^i)-\cP_{x_{k-i}}^{x_{k}}F(x_{k-i}) \right\| \leq \left[ \frac{(m+1)\rho_1 M_1}{2} + L_2(M_2+\frac{m\sqrt{K}M_1}{L_1}) \right] \sum_{j=0}^i\|r_{k-j}\|^2. 
\end{equation}
Next, \cref{coro:multi_para} and \cref{asmp:F_lip} yields:
\begin{equation}\label{eqn:prop2_3}
    \begin{aligned}
     &\left\| \cP_{x_{k-i}}^{x_{k}}F(x_{k-i})-\cP_{x_{k-1}}^{x_{k}}\cdots\cP_{x_{k-i}}^{x_{k-i+1}}F(x_{k-i}) \right\| 
     \leq \rho_1\|F(x_{k-i})\|\sum_{j=1}^{i-1}\dist(x_{k-j},x_{k-j+1}) \\
     \leq& \sum_{j=1}^{i-1}\frac{\rho_1}{L_1}\|r_{k-i}\|(\|r_{k-j}\|+\|r_{k-j+1}\|) 
     \leq \frac{2\rho_1}{L_1}\sum_{j=0}^{i-1}\|r_{k-i}\|\|r_{k-j}\|\\
     \leq& \frac{2m\rho_1}{L_1}\sum_{j=0}^i\|r_{k-j}\|^2.
    \end{aligned}
\end{equation}
By \cref{coro:multi_para_vect_err} and \cref{asmp:F_lip}, we have
\begin{equation}\label{eqn:prop2_4}
    \begin{aligned}
     &\left\| \cP_{x_{k-1}}^{x_{k}}\cdots\cP_{x_{k-i}}^{x_{k-i+1}}F(x_{k-i})-\cT_{x_{k-1}}^{x_k}\cdots\cT_{x_{k-i}}^{x_{k-i+1}}F(x_{k-i}) \right\| \\
     \leq& \rho\|r_{k-i}\|\sum_{j=1}^i M_{\cU}^{i-j}\dist(x_{k-j},x_{k-j+1}) \\
     \leq& \frac{\rho}{L_1}\max\{M_\cU,M_\cU^m\}\|r_{k-i}\|\sum_{j=1}^i (\|r_{k-j}\|+\|r_{k-j+1}\|) \\
     \leq& \frac{2m\rho}{L_1}\max\{M_\cU,M_\cU^m\}\sum_{j=0}^i \|r_{k-j}\|^2.
    \end{aligned}
\end{equation}
Set $M_3 = \left[ \frac{(m+1)\rho_1 M_1}{2} + L_2(M_2+\frac{m\sqrt{K}M_1}{L_1}) \right]+\frac{2m\rho_1}{L_1}+\frac{2m\rho}{L_1}\max\{M_\cU,M_\cU^m\}$. Combining inequalities \cref{eqn:prop2_1}, \cref{eqn:prop2_2}, \cref{eqn:prop2_3} and \cref{eqn:prop2_4}, we complete the proof.

\end{proof}

\subsection{Proof of \texorpdfstring{\cref{prop:3}}{} }\label{SM:proof_prop_3}
\begin{proof}
    To simplify the notations, fix $1\leq i \leq k$ and set $x = x_k, v_1 = -w_k^{i-1}, v_2 = -z_k^{i-1}, \Bar{v} = -\dx{k-i}{k}, \lambda = \gamma_i^k, y_1 = \Exp_x(v_1) = v_k^{i-1}, y_2 = \Exp_x(v_2) = y_k^{i-1}, z_1 = \Exp_x(v_1+\lambda \Bar{v}) = v_k^i, z_2 = \Exp_x(v_2+\Bar{v}) = y_k^i$. Define $\Tilde{z}_1 = \Exp_{y_1}(\lambda \cP_{x}^{y_1}\ \Bar{v}), \Tilde{z}_2 = \Exp_{y_1}( \cP_{x}^{y_2}\ \Bar{v})$. Since $x_j\in\cB_{\cM}(x^*,r^\prime), 1\leq j\leq k$, by\cref{coro:dist_control} we know that $\dist(x_{k-l},x_{k-j}) <  \frac{\tilr}{2}$ for $1\leq l,j \leq k$ and that $y_1,y_2,z_1,z_2, \Tilde{z}_1,\Tilde{z}_2 \in \cB_{\cM}(x,\frac{\tilr}{2})$. So the distance of any two points among $x,y_1,y_2,z_1,z_2,\Tilde{z}_1,\Tilde{z}_2$ is no larger than $\tilr$. Then we have:
    \begin{equation}\label{eqn:prop_3_1}
        \begin{aligned}
        &\norm{\cP_{z_1}^x F(z_1)-\cP_{y_1}^x F(y_1)-\lambda (\cP_{z_2}^x F(z_2)-\cP_{y_2}^x F(y_2))} \\
        \leq\ &\norm{\cP_{z_1}^x F(z_1)- \cP_{\Tilde{z}_1}^x F(\Tilde{z}_1)}+\norm{\cP_{z_2}^x F(z_2)- \cP_{\Tilde{z}_2}^x F(\Tilde{z}_2)} 
        +\norm{\cP_{\Tilde{z}_1}^x F(\Tilde{z}_1)-\cP_{y_1}^x F(y_1)-\lambda (\cP_{\Tilde{z}_2}^x F(\Tilde{z}_2)-\cP_{y_2}^x F(y_2))}.
    \end{aligned}
    \end{equation}
    The Lipschitz continuity of $F$ , \cref{lem:triangle_rule} and \cref{lem:2parallel} yield:
    \begin{equation}\label{eqn:prop_3_2}
        \begin{aligned}
        &\norm{\cP_{z_1}^x F(z_1)- \cP_{\Tilde{z}_1}^x F(\Tilde{z}_1)} \\
        \leq\ &\norm{F(z_1)- \cP_{\Tilde{z}_1}^{z_1} F(\Tilde{z}_1) } + \norm{\cP_{\Tilde{z}_1}^{z_1} F(\Tilde{z}_1) - \cP_{x}^{z_1}\cP_{\Tilde{z}_1}^{x} F(\Tilde{z}_1) } \\
        \leq\ &L_2\dist(z_1,\Tilde{z}_1)+\rho_1\dist(x,\Tilde{z}_1)\dist(x,z_1)\norm{F(\Tilde{z}_1)} \\
        \leq\ &L_2 c_0\min\{\norm{v_1},|\lambda|\norm{\Bar{v}}\}(\norm{v_1}+|\lambda|\norm{\Bar{v}})^2      +\rho_1(\norm{v_1}+|\lambda|\norm{\Bar{v}})^2\left( \norm{F(x)}+L_2(\norm{v_1}+|\lambda|\norm{\Bar{v}}) \right) \\
        \leq\ & \frac{m\tilr M_1^2}{2}\left[ L_2c_0+\rho_1(1+L_2) \right]\sum_{j=0}^i\|r_{k-j}\|^2.
    \end{aligned}
    \end{equation}
    Similarly, we have
    \begin{equation}\label{eqn:prop_3_3}
        \norm{\cP_{z_2}^x F(z_2)- \cP_{\Tilde{z}_2}^x F(\Tilde{z}_2)} \leq \frac{m\tilr M_1^2}{2}\left[ L_2c_0+\rho_1(1+L_2) \right]\sum_{j=0}^i\|r_{k-j}\|^2.
    \end{equation}
    Note that
    \begin{equation}\label{eqn:prop_3_4}
        \begin{aligned}
        &\norm{\cP_{\Tilde{z}_1}^x F(\Tilde{z}_1)-\cP_{y_1}^x F(y_1)-\lambda (\cP_{\Tilde{z}_2}^x F(\Tilde{z}_2)-\cP_{y_2}^x F(y_2))} \\
        \leq\ &\norm{\cP_{\Tilde{z}_1}^{x} F(\Tilde{z}_1) - \cP_{y_1}^{x}\cP_{\Tilde{z}_1}^{y_1} F(\Tilde{z}_1) } + \norm{\cP_{\Tilde{z}_2}^{x} F(\Tilde{z}_2) - \cP_{y_2}^{x}\cP_{\Tilde{z}_2}^{y_2} F(\Tilde{z}_2) } \\+&\norm{\cP_{y_1}^x\left(\cP_{\Tilde{z}_1}^{y_1} F(\Tilde{z}_1)- F(y_1)\right)-\lambda \left(\cP_{y_2}^x\left(\cP_{\Tilde{z}_2}^{y_2} F(\Tilde{z}_2)- F(y_2)\right) \right)}. \\
    \end{aligned}
    \end{equation}
   Notice that by \cref{asmp:F_lip} $\|F(\Tilde{z}_1)\| \leq L_2\dist(x,\Tilde{z}_1)+\|F(x)\| < (1+L_2)\frac{\tilr}{2}$. Similarly $\|F(\Tilde{z}_2)\| < (1+L_2)\frac{\tilr}{2}$. By \cref{lem:2parallel}, we have
   \begin{equation}\label{eqn:prop_3_5}
       \begin{aligned}
        \norm{\cP_{\Tilde{z}_1}^{x} F(\Tilde{z}_1) - \cP_{y_1}^{x}\cP_{\Tilde{z}_1}^{y_1} F(\Tilde{z}_1) } 
    \leq\ &\rho_1\dist(x,y_1)\dist(y_1,\Tilde{z}_1)\norm{F(\Tilde{z}_1)} 
    = \rho_1\norm{v_1}\norm{\lambda\Bar{v}}\norm{F(\Tilde{z}_1)} \\
    \leq\ &\frac{\rho_1 \tilr(1+L_2)}{8}(\|v_1\|+\|\lambda\Bar{v}\|)^2 
    \leq\ \frac{\rho_1 \tilr(1+L_2)mM_1^2}{8}\sum_{j=0}^i\|r_{k-j}\|^2.
    \end{aligned}
   \end{equation}
    Similarly,
    \begin{equation}\label{eqn:prop_3_6}
        \norm{\cP_{\Tilde{z}_2}^{x} F(\Tilde{z}_2) - \cP_{y_2}^{x}\cP_{\Tilde{z}_2}^{y_2} F(\Tilde{z}_2) } \leq \frac{\rho_1 \tilr(1+L_2)mM_1^2}{8}\sum_{j=0}^i\|r_{k-j}\|^2.
    \end{equation}
    So it remains to estimate the upper bound of 
    \[
    \norm{\cP_{y_1}^x\left(\cP_{\Tilde{z}_1}^{y_1} F(\Tilde{z}_1)- F(y_1)\right)-\lambda \left(\cP_{y_2}^x\left(\cP_{\Tilde{z}_2}^{y_2} F(\Tilde{z}_2)- F(y_2)\right) \right)}.
    \]
    Define $\gamma_1(t) := \Exp_{y_1}(t\lambda\cP_{x}^{y_1}\ \Bar{v})$ and $\gamma_2(t) := \Exp_{y_2}(t\cP_{x}^{y_2}\ \Bar{v})$. Notice that $\dist(x^*,\gamma_1(t)) \leq \dist(x^*,x)+\dist(x,y_1)+\dist(y_1,\gamma(t)) = \dist(x^*,x)+\|v_1\|+t|\gamma|\|\Bar{v}\| < r^\prime+\frac{\tilr}{2} < \tilr$ and similarly $\dist(x^*,\gamma_1(t)) < \tilr$. So $\dist(\gamma_1(t),\gamma-2(t)) < 2\tilr$. Set $\mu = \max\{\|H(p)\|: \dist(p,x^*)\leq \tilr\}$. Taylor formula yields
    \[
    \begin{aligned}
        \cP_{\Tilde{z}_1}^{y_1} F(\Tilde{z}_1)- F(y_1)
        =\  &\int_0^1\cP_{\gamma_1(t)}^{y_1}\nabla_{\dot{\gamma}_1(t)}F(\gamma_1(t)) \rmd t         =\int_0^1\cP_{\gamma_1(t)}^{y_1}H(\gamma_1(t))[\dot{\gamma}_1(t)] \rmd t \\
        =\ &\lambda\int_0^1\left(\cP_{\gamma_1(t)}^{y_1}H(\gamma_1(t))\cP_{y_1}^{\gamma_1(t)}\right)\cP_{x}^{y_1}[\Bar{v}] \rmd t.\\
    \end{aligned}
    \]
    Thus,
    \[
    \cP_{y_1}^x\left(\cP_{\Tilde{z}_1}^{y_1} F(\Tilde{z}_1)- F(y_1)\right) = \lambda\int_0^1\left(\cP_{y_1}^x\cP_{\gamma_1(t)}^{y_1}H(\gamma_1(t))\cP_{y_1}^{\gamma_1(t)}\cP_{x}^{y_1}\right)[\Bar{v}] \rmd t.
    \]
    Similarly,
    \[
    \cP_{y_2}^x\left(\cP_{\Tilde{z}_2}^{y_2} F(\Tilde{z}_2)- F(y_2)\right) = \int_0^1\left(\cP_{y_2}^x\cP_{\gamma_2(t)}^{y_2}H(\gamma_2(t))\cP_{y_2}^{\gamma_2(t)}\cP_{x}^{y_2}\right)[\Bar{v}] \rmd t.
    \]
    Hence we have
    \[
    \begin{aligned}
        &\cP_{y_1}^x\left(\cP_{\Tilde{z}_1}^{y_1} F(\Tilde{z}_1)- F(y_1)\right)-\lambda \left(\cP_{y_2}^x\left(\cP_{\Tilde{z}_2}^{y_2} F(\Tilde{z}_2)- F(y_2)\right) \right) \\
        =\ &\lambda \int_0^1\left( \cP_{y_1}^x\cP_{\gamma_1(t)}^{y_1}H(\gamma_1(t))\cP_{y_1}^{\gamma_1(t)}\cP_{x}^{y_1}-\cP_{y_2}^x\cP_{\gamma_2(t)}^{y_2}H(\gamma_2(t))\cP_{y_2}^{\gamma_2(t)}\cP_{x}^{y_2}\right)[\Bar{v}] \rmd t.
    \end{aligned}
    \]
    Notice that for $0\leq t\leq 1$,
    \[
    \begin{aligned}
        &\norm{\cP_{y_1}^x\cP_{\gamma_1(t)}^{y_1}H(\gamma_1(t))\cP_{y_1}^{\gamma_1(t)}\cP_{x}^{y_1}-\cP_{y_2}^x\cP_{\gamma_2(t)}^{y_2}H(\gamma_2(t))\cP_{y_2}^{\gamma_2(t)}\cP_{x}^{y_2}} \\
        \leq\ &\norm{\cP_{y_1}^x\cP_{\gamma_1(t)}^{y_1}H(\gamma_1(t))\cP_{y_1}^{\gamma_1(t)}\cP_{x}^{y_1}-\cP_{\gamma_1(t)}^{x}H(\gamma_1(t))\cP_{x}^{\gamma_1(t)}}+\norm{\cP_{\gamma_1(t)}^{x}H(\gamma_1(t))\cP_{x}^{\gamma_1(t)}-\cP_{\gamma_2(t)}^{x}H(\gamma_2(t))\cP_{x}^{\gamma_2(t)}} \\
        +&\norm{\cP_{\gamma_2(t)}^{x}H(\gamma_2(t))\cP_{x}^{\gamma_2(t)} - \cP_{y_2}^x\cP_{\gamma_2(t)}^{y_2}H(\gamma_2(t))\cP_{y_2}^{\gamma_2(t)}\cP_{x}^{y_2}} \\
        \leq\ &\norm{\cP_{y_1}^x\cP_{\gamma_1(t)}^{y_1}H(\gamma_1(t))\cP_{y_1}^{\gamma_1(t)}\cP_{x}^{y_1}-\cP_{\gamma_1(t)}^{x}H(\gamma_1(t))\cP_{x}^{\gamma_1(t)}} +\norm{\cP_{\gamma_2(t)}^{x}H(\gamma_2(t))\cP_{x}^{\gamma_2(t)} - \cP_{y_2}^x\cP_{\gamma_2(t)}^{y_2}H(\gamma_2(t))\cP_{y_2}^{\gamma_2(t)}\cP_{x}^{y_2}} \\
        +&\norm{\cP_{\gamma_2(t)}^{\gamma_1(t)}H(\gamma_2(t))\cP_{\gamma_1(t)}^{\gamma_2(t)} - \cP_{x}^{\gamma_1(t)}\cP_{\gamma_2(t)}^{x}H(\gamma_2(t))\cP_{x}^{\gamma_2(t)}\cP_{\gamma_1(t)}^{x}} 
        +\norm{ H(\gamma_1(t)) -\cP_{\gamma_2(t)}^{\gamma_1(t)}H(\gamma_2(t))\cP_{\gamma_1(t)}^{\gamma_2(t)}}.
    \end{aligned}
    \]
    Since $0\leq t \leq 1$, \cref{lem:para_jacobian} implies that for all $0\leq t \leq 1$,
    \[
    \begin{aligned}             \norm{\cP_{y_1}^x\cP_{\gamma_1(t)}^{y_1}H(\gamma_1(t))\cP_{y_1}^{\gamma_1(t)}\cP_{x}^{y_1}-\cP_{\gamma_1(t)}^{x}H(\gamma_1(t))\cP_{x}^{\gamma_1(t)}} 
    \leq\ &2\rho_1\dist(x,y_1)\dist(y_1,\gamma_1(t))\|H(\gamma_1(t))\| \\
    \leq\ &2\rho_1\mu t\|v_1\|\|\lambda \Bar{v}\| 
    \leq\ \frac{\rho_1\mu}{2}(\|v_1\|+\|\lambda \Bar{v}\|)^2 \\
    \leq\ &\frac{\rho_1\mu mM_1^2}{2}\sum_{j=0}^{i}\|r_{k-j}\|^2.
    \end{aligned}
    \]
    Similarly, we have
    \[          \norm{\cP_{\gamma_2(t)}^{x}H(\gamma_2(t))\cP_{x}^{\gamma_2(t)} - \cP_{y_2}^x\cP_{\gamma_2(t)}^{y_2}H(\gamma_2(t))\cP_{y_2}^{\gamma_2(t)}\cP_{x}^{y_2}} \leq\ \frac{\rho_1\mu mM_1^2}{2}\sum_{j=0}^{i}\|r_{k-j}\|^2,
    \]
    and
    \[ \norm{\cP_{\gamma_2(t)}^{\gamma_1(t)}H(\gamma_2(t))\cP_{\gamma_1(t)}^{\gamma_2(t)} - \cP_{x}^{\gamma_1(t)}\cP_{\gamma_2(t)}^{x}H(\gamma_2(t))\cP_{x}^{\gamma_2(t)}\cP_{\gamma_1(t)}^{x}}  \leq\ 2\rho_1\mu mM_1^2\sum_{j=0}^{i}\|r_{k-j}\|^2,
    \]  
    And \cref{asmp:jacobian} shows that
    \[
    \begin{aligned}
        \norm{ H(\gamma_1(t)) -\cP_{\gamma_2(t)}^{\gamma_1(t)}H(\gamma_2(t))\cP_{\gamma_1(t)}^{\gamma_2(t)}}
        \leq\ &L\left( \dist(\gamma_1(t),y_1)+\dist(y_1,x)+\dist(x,y_2)+\dist(y_2,\gamma_2(t)) \right) \\
        =\ &L\left(\norm{v_1}+ \norm{v_2}+ (|\lambda|+1)\norm{\Bar{v}}\right) 
        \leq\ 2M_1L\sum_{j=0}^{i}\|r_{k-j}\|
    \end{aligned}
    \]
    holds for all $0\leq t \leq 1$. As a result, $\forall t\in [0,1]$, 
    \[
    \begin{aligned}
        &\norm{\cP_{y_1}^x\cP_{\gamma_1(t)}^{y_1}H(\gamma_1(t))\cP_{y_1}^{\gamma_1(t)}\cP_{x}^{y_1}-\cP_{y_2}^x\cP_{\gamma_2(t)}^{y_2}H(\gamma_2(t))\cP_{y_2}^{\gamma_2(t)}\cP_{x}^{y_2}} 
        \leq\ 2M_1L\sum_{j=0}^{i}\|r_{k-j}\|+3\rho_1\mu mM_1^2\sum_{j=0}^{i}\|r_{k-j}\|^2.
    \end{aligned}
    \]
    Finally, we have
    \begin{equation}\label{eqn:prop_3_7}
        \begin{aligned}
        &\norm{\cP_{y_1}^x\left(\cP_{\Tilde{z}_1}^{y_1} F(\Tilde{z}_1)- F(y_1)\right)-\lambda \left(\cP_{y_2}^x\left(\cP_{\Tilde{z}_2}^{y_2} F(\Tilde{z}_2)- F(y_2)\right) \right)} \\
        \leq\ &|\lambda|\int_0^1 \norm{\cP_{y_1}^x\cP_{\gamma_1(t)}^{y_1}H(\gamma_1(t))\cP_{y_1}^{\gamma_1(t)}\cP_{x}^{y_1}-\cP_{y_2}^x\cP_{\gamma_2(t)}^{y_2}H(\gamma_2(t))\cP_{y_2}^{\gamma_2(t)}\cP_{x}^{y_2}}\norm{\Bar{v}} \rmd t \\
        \leq\ &|\lambda|\norm{\Bar{v}}\left[2M_1L\sum_{j=0}^{i}\|r_{k-j}\|+3\rho_1\mu mM_1^2\sum_{j=0}^{i}\|r_{k-j}\|^2 \right] \\
        \leq\ & 2M_1^2L(\sum_{j=0}^{i}\|r_{k-j}\|)^2+\frac{3}{2}\rho_1\mu mM_1^2\tilr\sum_{j=0}^{i}\|r_{k-j}\|^2 \\
        \leq\ & (2L+\frac{3}{2}\rho_1\mu \tilr)mM_1^2\sum_{j=0}^{i}\|r_{k-j}\|^2.
    \end{aligned}
    \end{equation}
    Set $M_4 = m\tilr M_1^2\left[ L_2c_0+\rho_1(1+L_2) \right]+\frac{1}{4}\rho_1 \tilr(1+L_2)mM_1^2+(2L+\frac{3}{2}\rho_1\mu \tilr)mM_1^2$ and the proof is then ended by inequalities \cref{eqn:prop_3_1},\cref{eqn:prop_3_2}, \cref{eqn:prop_3_3}, \cref{eqn:prop_3_4}, \cref{eqn:prop_3_5}, \cref{eqn:prop_3_6} and \cref{eqn:prop_3_7} .
\end{proof}

\subsection{Proof of \texorpdfstring{\cref{lem:upper_bound}}{} } 
\label{SM:proof_lem_upper_bound}
\begin{proof}
The definition of $\Gamma_k(v)$ yields:
\[
\norm{v-R_k\Gamma_k (v)}^2+\delta_k\norm{X_k\Gamma_k(v)}^2 \leq \norm{v}^2.
\]
Hence,
\[
\begin{aligned}
\norm{\cH_k(v)}^2 &= \norm{\beta_k v-\alpha_k(X_k+\beta_k\R_k)\Gamma_k(v)}^2 \\
&= \norm{\beta_k(1-\alpha_k)v+\beta_k\alpha_k\left(v-R_k\Gamma_k(v)\right)-\alpha_k\delta_k^{-\frac{1}{2}}\delta_k^{\frac{1}{2}}X_k\Gamma_k(v) }^2 \\
&\leq \left[ \beta_k^2(1-\alpha_k)^2+\beta_k^2\alpha_k^2+\alpha_k^2\delta_k^{-1} \right]\left[ \norm{v}^2+\norm{v-R_k\Gamma_k (v)}^2+\delta_k\norm{X_k\Gamma_k(v)}^2 \right]\\
&\leq 2\left[ \beta_k^2(1+2\alpha_k^2-2\alpha_k)+\alpha_k^2\delta_k^{-1} \right]\norm{v}^2.
\end{aligned}
\]
\end{proof}




\end{appendices}


\bibliography{sn-article}

\end{document}